\definecolor{mygreen}{RGB}{28,172,0} %
\definecolor{mylilas}{RGB}{170,55,241}
\titleformat*{\section}{\large\bfseries}
\titleformat*{\subsection}{\bfseries}
\title{Rigorous enclosure of Lyapunov exponents of stochastic flows}
\author{Maxime Breden\thanks{CMAP, CNRS, \'Ecole polytechnique, Institut Polytechnique de
Paris, 91120 Palaiseau, France. \href{mailto:maxime.breden@polytechnique.edu}{maxime.breden@polytechnique.edu}} $\quad$ Hugo Chu\thanks{Department of Mathematics, Imperial College London, London SW7 2AZ, United Kingdom. \href{mailto:hugo.chu17@imperial.ac.uk}{hugo.chu17@imperial.ac.uk}, \href{mailto:jsw.lamb@imperial.ac.uk}{jsw.lamb@imperial.ac.uk}, \href{mailto:m.rasmussen@imperial.ac.uk}{m.rasmussen@imperial.ac.uk}} $\quad$ Jeroen S.W. Lamb\footnotemark[2]$\;$\thanks{International Research Center for Neurointelligence, The University of Tokyo, Tokyo,113-0033, Japan}$\;$\thanks{Centre for Applied Mathematics and Bioinformatics, Department of Mathematics and
Natural Sciences, Gulf University for Science and Technology, Halwally 32093, Kuwait} $\quad$ Martin Rasmussen\footnotemark[2]}
\theoremstyle{definition}
\newtheorem{thm}{Theorem}
\newtheorem{defn}[thm]{Definition}
\newtheorem{cor}[thm]{Corollary}
\newtheorem{rmk}[thm]{Remark}
\newtheorem{lem}[thm]{Lemma}
\newtheorem{prop}[thm]{Proposition}
\newcounter{dummy}
\newcommand\myitem[1][]{\item[#1]\refstepcounter{dummy}\def\@currentlabel{#1}}
\renewcommand{\d}{\mathrm{d}}
\newcommand{\diff}[2]{\frac{\mathrm{d}#1}{\mathrm{d}#2}}
\newcommand{\pdiff}[2]{\frac{\partial#1}{\partial#2}}
\newcommand{\Pb}{\mathbb{P}}
\newcommand{\Ind}[1]{\mathds{1}_{#1}}
\newcommand{\R}{\mathbb{R}}
\newcommand{\T}{\mathbb{T}}
\newcommand{\cL}{\mathcal{L}}
\newcommand{\tr}{\mathrm{tr}}
\renewcommand{\d}{\mathrm{d}}
\newcommand{\N}{\mathbb{N}}
\newcommand{\vect}[2]{\left(\begin{matrix}
         #1 \\
         #2 \\
\end{matrix} \right)}
\begin{document}

\maketitle
\begin{abstract}
We develop a powerful and general method to provide rigorous and accurate upper and lower bounds for Lyapunov exponents of stochastic flows. Our approach is based on computer-assisted tools, the adjoint method and established results on the ergodicity of diffusion processes. We do not require any structural assumptions on the stochastic system and work under mild hypoellipticity conditions and outside of perturbative regimes. Therefore, our method allows for the treatment of systems that were so far out of reach from existing mathematical tools. We demonstrate our method to exhibit the chaotic nature of four different systems. Finally, we show the robustness of our approach by combining it with continuation methods to produce bounds on Lyapunov exponents over large parameter regions.
\end{abstract}

\begin{center}
{\bf \small Keywords} \\ \vspace{.05cm}
{\small Adjoint method $\cdot$ Lyapunov exponents $\cdot$ Computer-assisted proofs\\Stochastic differential equations $\cdot$ Poisson problem}
\end{center}

\begin{center}
{\bf \small Mathematics Subject Classification (2020)}  \\ \vspace{.05cm}
{\small 37M25 $\cdot$ 37H15 $\cdot$ 65P30 $\cdot$ 60J22 $\cdot$ 65G20} 
\end{center}



\section{Introduction}



We consider stochastic flows $(\varphi_t)_{t\geq 0}$ generated by nonlinear stochastic differential equations of the form
\begin{equation}
    \d\varphi_t(\omega, x) = X_0(\varphi_t(\omega, x)) \d t + \sum_{i=1}^\ell X_i(\varphi_t(\omega, x)) \circ \d B^i_t(\omega) \label{eqn:gen-sde}, \qquad \varphi_0(\omega, x) = x,
\end{equation}
where $X_0, X_1, \ldots, X_\ell$ denote analytic and complete vector fields on an analytic Riemannian manifold $(\mathcal{M},\langle\cdot,\cdot \rangle)$, the $B^i$'s are independent standard Brownian motions over a filtered probability space $(\Omega, \mathcal{F}, (\mathcal{F}_t)_{t\geq 0}, \mathbb{P})$ and $\circ$ denotes Stratonovich integration. A central object in the study of the dynamics of the stochastic flow $(\varphi_t)_{t\geq 0}$ is the (top) Lyapunov exponent 
\begin{equation}
     \lambda(\omega, x, v) := \lim_{t\to \infty} \frac{1}{t}\log\|D\varphi_t(\omega, x)v\|\label{eqn:ftl}.
\end{equation}
Throughout this paper, we assume the ergodicity of the Markov process $(\varphi_t)_{t\geq 0}$, in the sense that $(\varphi_t)_{t\geq 0}$ possesses a unique stationary probability measure $\mu$ on $\mathcal{M}$.
Under the existence and uniqueness of such an ergodic measure (and mild integrability conditions), it is well-established by the celebrated Osdelets multiplicative ergodic theorem~\cite{Furstenberg1960ProductsMatrices,oseledec1968multiplicative} (see also~\cite[Chapters 3 \& 4]{Arnold1998RandomSystems}) that the Lyapunov exponent $\lambda$ is well-defined and constant for $(\mathbb{Pb}\times \mu)$-almost every $(\omega, x) \in \Omega\times \mathcal{M}$, Lebesgue-almost every $v\in \mathbf{P}(T_x\mathcal{M})$, the projective space of the tangent space $T_x \mathcal{M}$.

The Lyapunov exponent $\lambda$ is an essential tool to characterise stochastic dynamics. It is an important ingredient in the proof of \emph{synchronisation when $\lambda$ is negative}~\cite{Baxendale1991StatisticalDiffeomorphisms, Flandoli2017SynchronizationNoise} and in the characterisation of various forms of \emph{chaotic dynamics when $\lambda$ is positive}~\cite{Lamb2025HorseshoesMaps, Ledrappier1988EntropyTransformations}. In particular, from~\eqref{eqn:ftl}, it can be seen that it is linked to sensitivity to initial conditions, which is a quintessential feature of chaos. Thus, obtaining rigorous bounds on the Lyapunov exponent is crucial to understanding the dynamics of $(\varphi_t)_{t\geq 0}$. This has been a long-standing fundamental problem, highlighted for instance by Kingman~\cite{Kingman1973SubadditiveTheory}: ``Pride of place among the unsolved problems of subadditive ergodic theory must go to [its] calculation''. This question has already been studied extensively in~\cite{Arnold1998RandomSystems, Blumenthal2022LyapunovMaps, Blumenthal2018LyapunovMaps, Blumenthal2017LyapunovMap,Blumenthal2022PositiveMaps} and in particular for stochastic flows~\cite{Baxendale2024LyapunovNoise,Baxendale2002LyapunovSystems,Bedrossian2022AEquations,Chemnitz2023PositiveNoise}. However, most existing results are restricted to the small noise limit $X_i\to 0$ (for $i\neq 0$) and 
to systems enjoying a particular structure, such as volume-preserving/incompressible/Hamiltonian stochastic flows~\cite{Arnold2001TheSystems,Baxendale2002LyapunovSystems}, for which establishing the positivity of the (top) Lyapunov exponent often follows from non-degeneracy conditions such as the Furstenberg criterion~\cite{Furstenberg1963NoncommutingProducts} (see~\cite{CotiZelati2024Three-dimensionalFlows} for a recent example), especially when the noise is small.




In this paper, we propose a computer-assisted method for \emph{rigorously enclosing} $\lambda$ under mild conditions, which substantially reduces the problem of rigorously computing Lyapunov exponents for stochastic flows, and is in principle applicable to a wide range of low-dimensional Markovian random dynamical systems.

While computer-assisted methods have played an increasingly important role in the theory of dynamical systems~\cite{Galias1998ComputerEquations,Kapela2021CAPD::DynSys:Systems,Lanford2017AConjectures,Mischaikow1995ChaosProof,Tucker2002AProblem,vandenBerg2015RigorousDynamics,Vytnova2025HausdorffGasket}, the problem of rigorously enclosing Lyapunov exponents has thus far remained very challenging: in the more manageable discrete-time setting, a substantial body of work resolves this problem, including~\cite{Pollicott2010MaximalProducts} for random products of matrices and~\cite{Chihara2022ExistenceMaps, Froyland2000RigorousProducts,Galatolo2020ExistenceProof} for iterated function systems (see also~\cite{Galatolo2014AnMeasures,Pollicott2023AccurateInterval,Wormell2019SpectralDynamics} in the deterministic setting). In the case of stochastic flows induced by stochastic differential equations of the type~\eqref{eqn:gen-sde}, the tackling of this problem with computer-assisted methods is more recent, with a first attempt in~\cite{Breden2023Computer-AssistedSystems} that only applied to specific SDEs conditioned to a bounded domain, and the successful treatment of a class of chaotic systems in the small noise limit in~\cite{Bedrossian2023LowerEquations}.


The approach proposed in this paper is much more general, but has the same starting point as several other studies, namely, the formulation of the top Lyapunov exponent $\lambda$ as an ergodic average via the limit~\eqref{eqn:ftl}, the so-called Furstenberg--Khasminskii formula~\cite{Carverhill1985ATheorem,Furstenberg1963NoncommutingProducts, Khasminskii1967NecessarySystems}
\begin{equation}\label{eqn:intro-FK-formula}
    \lambda = \int_{\mathbf{P}\mathcal{M}} Q\d\tilde{\mu},
\end{equation}
where the integrand $Q$ has an explicit formula in terms of the vector fields $X_0,X_1,\ldots, X_\ell$ and is the mean infinitesimal growth rate of the linear flow along the so-called \emph{projective process} $(\xi_t)_{t\geq 0} = (\varphi_t, s_t)_{t\geq 0}$ on $\mathbf{P}\mathcal{M}= \bigcup_{x\in \mathcal{M}}\mathbf{P}_x \mathcal{M}$, where

$$s_t(\omega, x, v) = \frac{D\varphi_t(\omega, x)v}{\|D\varphi_t(\omega, x)v\|} \in \mathbf{P}_{\varphi_t(\omega, x)}\mathcal{M};$$
and $\tilde{\mu}(\d \xi) = \tilde{w}(\xi)\d \xi$ is the ergodic probability measure of the Markov process $(\xi_t)_{t\geq 0}$ on $\mathbf{P}\mathcal{M}$, where $\d \xi$ denotes the (Riemannian) volume measure on $\mathbf{P}\mathcal{M}$ by abuse of notation.

Note that in general, the existence and uniqueness of such an invariant probability measure $\tilde{\mu}$ is non-trivial, but sufficient conditions for this to hold can be found in the literature, e.g. in~\cite{SanMartin1986AFlow}. It is a control-theoretic problem associated to the stochastic differential equation solved by $(\xi_t)_{t\geq 0}$
\begin{equation}
    \d \xi_t = \tilde{X}_0(\xi_t) \d t + \sum_{i=1}^\ell \tilde{X}_i(\xi_t) \circ \d B^i_t \label{eqn:proj-sde}, \qquad \xi_0 = (x, v),
\end{equation}
with vector fields $\tilde{X}_0, \tilde{X_1}, \ldots, \tilde{X}_\ell$ on $\mathbf{P}\mathcal{M}$.

In all but a few cases (e.g.~\cite{Baxendale1986AsymptoticDiffeomorphisms, Engel2019BifurcationCycle}), obtaining bounds or even only the sign for $\lambda$ via the Furstenberg--Khashminskii formula~\eqref{eqn:intro-FK-formula} has so far proven extremely difficult. A common strategy (in both the deterministic and random settings~\cite{Breden2023Computer-AssistedSystems,Froyland2000RigorousProducts,Galatolo2020ExistenceProof, Galatolo2014AnMeasures,Wormell2019SpectralDynamics}) is to try to enclose $\tilde{w}$ rigorously and then integrate $Q$ against $\tilde{w}$ to obtain bounds on $\lambda$ via~\eqref{eqn:intro-FK-formula}. In the context of stochastic flows, we could try to make use of computer-assisted proof methods~\cite{Gomez-Serrano2019Computer-assistedSurvey,Nakao2019NumericalEquations,Rump2010VerificationArithmetic,vandenBerg2015RigorousDynamics} to solve the stationary Fokker--Planck equation 
\begin{equation}
\label{eq:stationaryFK}
    \tilde{\mathcal{L}}^*\tilde{w} = 0,
\end{equation}
where $\tilde{\mathcal{L}}^*$ denotes the adjoint in $L^2(\mathbf{P}\mathcal{M})$ of
$$\tilde{\mathcal{L}} = \tilde{X}_0 +\frac{1}{2}\sum_{i=1}^\ell\tilde{X}_i^2,$$
which is the infinitesimal generator of the process $(\xi_t)_{t\geq 0}$ written in H\"ormander form. Such computer-assisted proof, if successful, would yield a precise and quantitative description of $\tilde{\omega}$, which would then allow to accurately enclose the integral~\eqref{eqn:intro-FK-formula} (see, e.g.,~\cite{Breden2023Computer-AssistedSystems}).  However, the rigorous general treatment of general partial differential equations (PDEs) of the form~\eqref{eq:stationaryFK} by computer-assisted means is for now far out of reach due to the typically \emph{non-elliptic} nature of $\tilde{\mathcal{L}}^*$ (and $\tilde{\mathcal{L}}$). The gap to bridge is even greater when considering problems on an unbounded state space $\mathcal{M}$~\cite{Breden2023Computer-AssistedSystems}. This difficulty stems from the fact that both these aspects usually prevent the derivation of explicit a priori bounds to quantify errors on the computation of $\tilde{w}$. While there exist techniques~\cite{Arnold1984ASystems,Glynn1996AEquation,Pardoux2005On3} to prove the existence of a spectral gap for $\tilde{\mathcal{L}}^{*}$ (or $\tilde{\mathcal{L}}$), these are typically non-constructive, and thus do not provide an explicit bound on this spectral gap. Even when explicit bounds are available, for instance in some elliptic cases, they can be very intricate and lead to unusable values in practice, typically depending exponentially on $\|\tilde{X}_0\|$~\cite{bogachev_fokkerplanckkolmogorov_2015, Bogachev2018TheDiffusions}. We will therefore depart from the usual strategy of estimating $\tilde{w}$ and develop a more flexible approach.


Nonetheless, in certain special cases, some estimates for \eqref{eqn:intro-FK-formula} can be achieved analytically in an asymptotic parameter regime e.g.~as $X_i\to 0, \, i\neq 0$~\cite{Bedrossian2022AEquations,Bedrossian2023LowerEquations, Chemnitz2023PositiveNoise}. In such a regime, one can, for instance, make use of the so-called \emph{adjoint method} proposed by Arnold, Papanicolaou and Wihstutz~\cite{Arnold1986AsymptoticApplications} (see~\cite{Baxendale2024LyapunovNoise, Baxendale2024Almost-SureSystem} for some recent examples).

In this paper, we combine the adjoint method with computer-assisted techniques
to obtain rigorous and tight bounds on Lyapunov exponents for general systems: we do not assume any asymptotic or perturbative regime, assuming only mild hypoellipticity conditions. Our method, which we now introduce in Section~\ref{sec:method}, is applicable to unbounded domains and does not necessitate any specific structure on~\eqref{eqn:gen-sde}. We showcase this in Section~\ref{sec:results}, which contains four examples of stochastic flows without specific structure or asymptotically small parameters, for which we establish the positivity of the (top) Lyapunov exponent. 
The first three examples feature the phenomenon of noise-induced chaos, i.e.~a transition from negative to positive Lyapunov exponent with increasing noise level (although the Lyapunov exponent may become negative again if the noise is too large), and the last two answer open conjectures.

\subsection{Our method to rigorously enclose Lyapunov exponents}
\label{sec:method}


As already mentioned, working directly with the Furstenberg--Khashminskii formula~\eqref{eqn:intro-FK-formula} is often very difficult. The idea of the adjoint method introduced in~\cite{Arnold1986AsymptoticApplications} is to instead consider, for a well chosen $u$,
\begin{equation}\label{eqn:intro-adj}
    \int_{\mathbf{P}\mathcal{M}} \left(Q-\tilde{\cL}u\right)\d\tilde{\mu}.
\end{equation}
Indeed, since $\tilde{\mathcal{L}}^*\tilde{w} = 0$, we formally get
\begin{equation}
    \int_{\mathbf{P}\mathcal{M}} \tilde{\cL}u \d\tilde{\mu} = \int_{\mathbf{P}\mathcal{M}} \tilde{\mathcal{L}}u(\xi)\tilde{w}(\xi) \d \xi = \int_{\mathbf{P}\mathcal{M}} u(\xi)(\tilde{\mathcal{L}}^*\tilde{w}(\xi)) \d \xi = 0,
\end{equation}
and this calculation can be made rigorous under suitable integrability and growth assumptions on $u$, see for instance Lemma~\ref{lem:Bax}, which is based on~\cite[Proposition B.1]{Baxendale2024LyapunovNoise}. Therefore, the (top) Lyapunov exponent $\lambda$ is also given by~\eqref{eqn:intro-adj}, that is
\begin{equation}\label{eqn:intro-adj-2}
    \lambda =  \int_{\mathbf{P}\mathcal{M}} \left(Q-\tilde{\cL}u\right)\d\tilde{\mu},
\end{equation}
and one can try to pick a function $u$ for which the integrand $(Q - \tilde{\cL}u)$ is more approachable so that the right-hand side of~\eqref{eqn:intro-adj-2} is easier to work with than the original Furstenberg--Khashminskii formula~\eqref{eqn:intro-FK-formula}. 
Ideally, one would like to find a function $u$ and a constant $\Lambda$ such that $Q-\tilde{\cL}u=\Lambda$, i.e., such that $u$ and $\Lambda$ solve the \emph{Poisson equation}
\begin{equation}\label{eqn:intro-gen-Poisson}
    \tilde{\mathcal{L}}u = Q - \Lambda,
\end{equation} 
because then we simply get $\lambda = \Lambda$ from~\eqref{eqn:intro-adj-2}. However, solving the Poisson equation~\eqref{eqn:intro-gen-Poisson} is not necessarily easier than solving the stationary Fokker--Planck equation~\eqref{eq:stationaryFK}, and finding an explicit solution is in general not possible. 

For some specific problems, in an asymptotic regime, ad hoc choices of $u$ that make~\eqref{eqn:intro-adj-2} more manageable than~\eqref{eqn:intro-FK-formula} can sometimes be found, and we again refer to~\cite{Baxendale2024LyapunovNoise} for a recent example where this idea is used several times. In contrast, in this work we propose a generic and robust way of using the adjoint method on a broad class of systems, which leads to quantitative and accurate estimates on the Lyapunov exponent. 

The main idea is that, even if the ``optimal'' $u$ (the one solving~\eqref{eqn:intro-gen-Poisson}) cannot be found explicitly, for many problems we can still find \emph{numerically} an approximate solution $\bar{u}$, and then use this approximate solution in~\eqref{eqn:intro-adj-2} in order to derive estimates on $\lambda$, which are going to be sharp if $\bar{u}$ is an accurate approximate solution. Suppose for the moment that we have an approximation $\bar{\lambda}$ of $\lambda$ (e.g.,~via a Monte-Carlo method~\cite{Grorud1996ApproximationEquations, Kloeden1992NumericalEquations, Talay1999TheEquations}), and that we have obtained an approximate solution $\bar{u}$ to the Poisson problem~\eqref{eqn:intro-gen-Poisson}, by solving numerically the equation
$$\tilde{\mathcal{L}}\bar{u} \approx Q - \bar{\lambda}.$$
How to find suitable $\bar\lambda$ and $\bar{u}$ in practice will be discussed in Section~\ref{sec:cap}. Setting
$$\bar{Q} \overset{\mathrm{def}}{=}\tilde{\mathcal{L}}\bar{u} + \bar{\lambda},$$
 we expect to have $\bar{Q} \approx Q$. Furthermore, using $\bar{u}$ in~\eqref{eqn:intro-adj-2} we get
 \begin{align*}
     \lambda = \int_{\mathbf{P}\mathcal{M}} \left(Q-\tilde{\cL}\bar{u}\right)\d\tilde{\mu} = \int_{\mathbf{P}\mathcal{M}} \left(Q-\bar{Q}\right)\d\tilde{\mu} + \bar{\lambda},
 \end{align*}
 hence
 \begin{equation}\label{eqn:lambda-Q}
    |\lambda -\bar{\lambda}| = \left|\int_{\mathbf{P}\mathcal{M}}(Q-\bar{Q})\d \tilde{\mu} \right|.
\end{equation}
That is, we control the difference between the approximate Lyapunov exponent $\bar\lambda$ (which we know explicitly) and the exact one $\lambda$ (unknown), via the difference $Q-\bar{Q}$ (where both terms are known).
Thus, the problem of enclosing $\lambda$ has been reduced to the easier problem of bounding the integral in~\eqref{eqn:lambda-Q}: while $\tilde{\mu}$ is still unknown, various inequalities can be applied to show that this integral is small. The derivation of such a bound mostly depends on the nature of $Q$ (and $\bar{Q}$) and $\mathcal{M}$ and on a priori estimates on $\tilde{\mu}$. In this paper, our strategy is mainly based on directly applying $L^{\infty}$ and $L^{1}$ bounds globally. For instance, on a compact state space $\mathcal{M}$, the functions $Q$ and $\bar{Q}$ are typically $L^{\infty}$ and
\begin{equation}\label{eqn:W=1-est}
|\lambda -\bar{\lambda}| = \left|\int_{\mathbf{P}\mathcal{M}}(Q-\bar{Q})\d \tilde{\mu} \right|\leq \|Q-\bar{Q}\|_{\infty} \int_{\mathbf{P}\mathcal{M}}\d \tilde{\mu} = \|Q-\bar{Q}\|_{\infty}.
\end{equation}

Note that $\bar{Q}$ is known explicitly in terms of $\bar{u}$ and $\bar\lambda$, therefore there is in principle no difficulty in getting an explicit upper bound for $\|Q-\bar{Q}\|_{\infty}$. In practice, this can be done efficiently using rigorous numerics and interval arithmetic~\cite{Tuc11}, see Proposition~\ref{prop:method} for more details. We emphasize that $\bar{u}$ and $\bar\lambda$ can and should be obtained using traditional numerics with floating-point arithmetic. The only steps that have to be carried out rigorously are the calculation of $\bar{Q}$, i.e., the application of $\tilde{\mathcal{L}}$ to $\bar{u}$, and then the estimation of $\|Q-\bar{Q}\|_{\infty}$. In contrast, even in specific situations in which one could have rigorously solved the stationary Fokker--Planck equation~\eqref{eq:stationaryFK} or the Poisson equation~\eqref{eqn:intro-gen-Poisson} using a computer-assisted proof, this would be a computer-assisted proof for solving a PDE, for which the amount of rigorous numerics required would be much higher compared to the computer-assisted approach proposed here (see Remark~\ref{rmk:mat-vec} for more details). We also point out that our method does not only apply to the computation of top Lyapunov exponents but to any ergodic average of the form~\eqref{eqn:intro-FK-formula} where $Q$ is given and explicit.

Estimate~\eqref{eqn:W=1-est} is in some sense rather crude, as it does not seem to use any information on $\tilde{\mu}$, but this is in fact a strength of our approach. Indeed, obtaining quantitative information on $\tilde{\mu}$ can often be very hard, and being able to rigorously enclose $\lambda$ without knowing anything about $\tilde{\mu}$ is part of what makes our method broadly applicable. Of course, if some information on $\tilde{\mu}$ is available then it may be leveraged to obtain sharper estimates (see for instance Section~\ref{sec:hopf} and~\ref{sec:Duffing}), but directly using the simple estimate~\eqref{eqn:W=1-est} can already lead to very precise results such as the one in Theorem~\ref{thm:intro-cellular} below.

The above estimate~\eqref{eqn:W=1-est} can be generalised to treat systems on a non-compact state space $\mathcal{M}$, where $Q$ or $\bar{Q}$ may not be in $L^{\infty}$, and one instead makes use of Foster--Lyapunov inequalities~\cite{Canizo2023Harris-typeSemigroups, Hairer2021ConvergenceProcesses, Meyn1993StabilityProcesses} to recover the necessary estimates (see Section~\ref{sec:cap_framework}). Examples with both bounded and unbounded state spaces are provided below.

\subsection{Examples of results obtained with our method}
\label{sec:results}

We illustrate the power of our method by providing sharp quantitative bounds on the Lyapunov exponent $\lambda$ for several SDE examples, well beyond the previous state-of-the-art. More details on each of these systems, together with the proofs of the theorems, are given in Sections~\ref{sec:cellular} to~\ref{sec:Duffing}.

\begin{thm}\label{thm:intro-cellular}
    Consider the cellular flow with sinks $(\varphi_t)_{t \geq 0}$ on $\mathcal{M} = \mathbb{T}^2$ generated by the stochastic differential equation
    \begin{equation}\label{eqn:intro-cellular}
    \begin{cases}
        \d x_t &= (\cos(x_t)/2-\cos y_t)\sin x_t \d t+\sigma\d B^1_t,\\
        \d y_t &= (\cos(y_t)/2+\cos x_t)\sin y_t \d t+\sigma\d B^2_t.
    \end{cases}
    \end{equation}
    Then, for $\sigma = \sqrt{2}$, the following bounds hold for the (top) Lyapunov exponent $\lambda$ defined by~\eqref{eqn:ftl}:
    \begin{equation}
    \lambda = 0.0558453099857 \pm 10^{-13}>0.\label{eqn:intro-lambda-cellular}
    \end{equation}
\end{thm}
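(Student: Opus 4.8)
The plan is to apply the general method just described, namely the adjoint method combined with rigorous numerics, to the projective process associated with~\eqref{eqn:intro-cellular}. First I would set up the geometry: since $\mathcal{M}=\mathbb{T}^2$ is compact and two-dimensional, the projective bundle $\mathbf{P}\mathcal{M}$ is the compact three-dimensional manifold $\mathbb{T}^2\times\mathbf{P}^1\cong\mathbb{T}^2\times(\mathbb{R}/\pi\mathbb{Z})$, coordinatized by $(x,y,\theta)$ where $\theta$ is the angle of the projective direction. I would write out explicitly the lifted vector fields $\tilde{X}_0,\tilde{X}_1,\tilde{X}_2$ on $\mathbf{P}\mathcal{M}$ (the drift $X_0$ lifts via its linearization $DX_0$ acting on $\theta$, and since the noise here is additive the $X_i=\sigma\partial_{x_i}$ for $i=1,2$ are constant, so $\tilde{X}_i=X_i$ with no $\theta$-component), assemble the generator $\tilde{\mathcal{L}}=\tilde{X}_0+\tfrac12(\tilde{X}_1^2+\tilde{X}_2^2)$, and write down the Furstenberg--Khasminskii integrand $Q$ explicitly in terms of the entries of $DX_0$ and the projective angle $\theta$. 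Because the noise is nondegenerate in the base directions and the drift couples $\theta$ to $(x,y)$, I would check that $\tilde{\mathcal{L}}$ is hypoelliptic (via a Hörmander bracket computation) and that $(\xi_t)$ admits a unique stationary measure $\tilde\mu$ — this is where results such as~\cite{SanMartin1986AFlow} are invoked; on a compact manifold the needed integrability for Lemma~\ref{lem:Bax} is automatic.

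Next I would carry out the numerical core: compute an approximation $\bar\lambda\approx 0.05584531$ of the Lyapunov exponent (by Monte-Carlo simulation of~\eqref{eqn:intro-cellular} together with the linearized flow, as discussed in Section~\ref{sec:cap}), and then solve the Poisson equation $\tilde{\mathcal{L}}\bar u = Q-\bar\lambda$ numerically on $\mathbf{P}\mathcal{M}$. Concretely, I would represent $\bar u$ in a Fourier basis on $\mathbb{T}^2\times(\mathbb{R}/\pi\mathbb{Z})$, truncate to a finite number of modes, and solve the resulting finite linear system (with the one-dimensional kernel corresponding to constants handled by fixing the mean of $\bar u$ and letting $\bar\lambda$ absorb the solvability condition). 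Having obtained $\bar u$ and $\bar\lambda$ in floating point, I would then set $\bar Q\deq\tilde{\mathcal{L}}\bar u+\bar\lambda$ and — this is the only step that must be done with interval arithmetic — rigorously evaluate $\tilde{\mathcal{L}}\bar u$ symbolically/in Fourier space and rigorously bound $\|Q-\bar Q\|_\infty$ over $\mathbf{P}\mathcal{M}$, e.g.\ by bounding the sup norm of the (finite, explicit, trigonometric-polynomial-plus-rational-in-trig) function $Q-\bar Q$ using interval arithmetic on a subdivision of the compact domain, or via rigorous bounds on its Fourier coefficients. Invoking Proposition~\ref{prop:method} (the rigorous version of~\eqref{eqn:W=1-est}), this yields $|\lambda-\bar\lambda|\le\|Q-\bar Q\|_\infty$, and if the numerical $\bar u$ is accurate enough this bound is below $10^{-13}$, giving~\eqref{eqn:intro-lambda-cellular}, in particular $\lambda>0$.

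The main obstacle I anticipate is obtaining an approximate solution $\bar u$ of the Poisson equation accurate enough that $\|Q-\bar Q\|_\infty<10^{-13}$: this requires resolving $\bar u$ to very high precision, which means a large Fourier truncation in three variables and careful control of the conditioning of the truncated linear system (the generator $\tilde{\mathcal{L}}$ is non-elliptic, so its truncation is far from symmetric and may be poorly conditioned). A secondary difficulty is making the rigorous sup-norm bound on $Q-\bar Q$ both valid and tight — $Q$ and $\tilde{\mathcal{L}}\bar u$ involve trigonometric functions of $\theta$ composed in a way that interval arithmetic can overestimate, so some care (subdivision of the $\theta$-domain, or working with validated Fourier series) is needed to keep the enclosure from blowing up. The verification that $\tilde\mu$ exists and is unique, and that the adjoint identity $\int\tilde{\mathcal{L}}\bar u\,\d\tilde\mu=0$ genuinely holds for the numerically-built $\bar u$ (which, being a trigonometric polynomial, is smooth and bounded with bounded derivatives on the compact $\mathbf{P}\mathcal{M}$), is comparatively routine given Lemma~\ref{lem:Bax}.
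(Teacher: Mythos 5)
Your proposal follows essentially the same route as the paper: derive the projective process and the Furstenberg--Khasminskii integrand $Q$ on $\mathbf{P}\mathcal{M}\cong\mathbb{T}^3$, establish ergodicity via the H\"ormander conditions and Proposition~\ref{prop-san-martin}, compute a Fourier-polynomial approximate solution $\bar u$ of the Poisson equation, and rigorously bound $|\lambda-\bar\lambda|\le\|Q-\bar Q\|_\infty$ with $W=1$ via the $\ell^1$-norm of the Fourier coefficients of the trigonometric polynomial $Q-\bar Q$ (which, contrary to your worry, incurs no interval overestimation). The only cosmetic difference is that the paper skips the Monte-Carlo pre-estimate and instead solves the least-squares problem~\eqref{eq:leastsquare} for $\bar u$ first, recovering $\bar\lambda$ as the projection $P_0(Q-\tilde{\mathcal{L}}\bar u)$ — which you effectively also do when you let $\bar\lambda$ ``absorb the solvability condition.''
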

\noindent Here and everywhere else in the paper, expressions like $\lambda = x \pm r$ mean that $\lambda \in [x-r,x+r]$.
\begin{figure}[ht]
\captionsetup[subfigure]{justification=centering}
\begin{subfigure}{0.5\textwidth}
\includegraphics[height=0.8\linewidth]{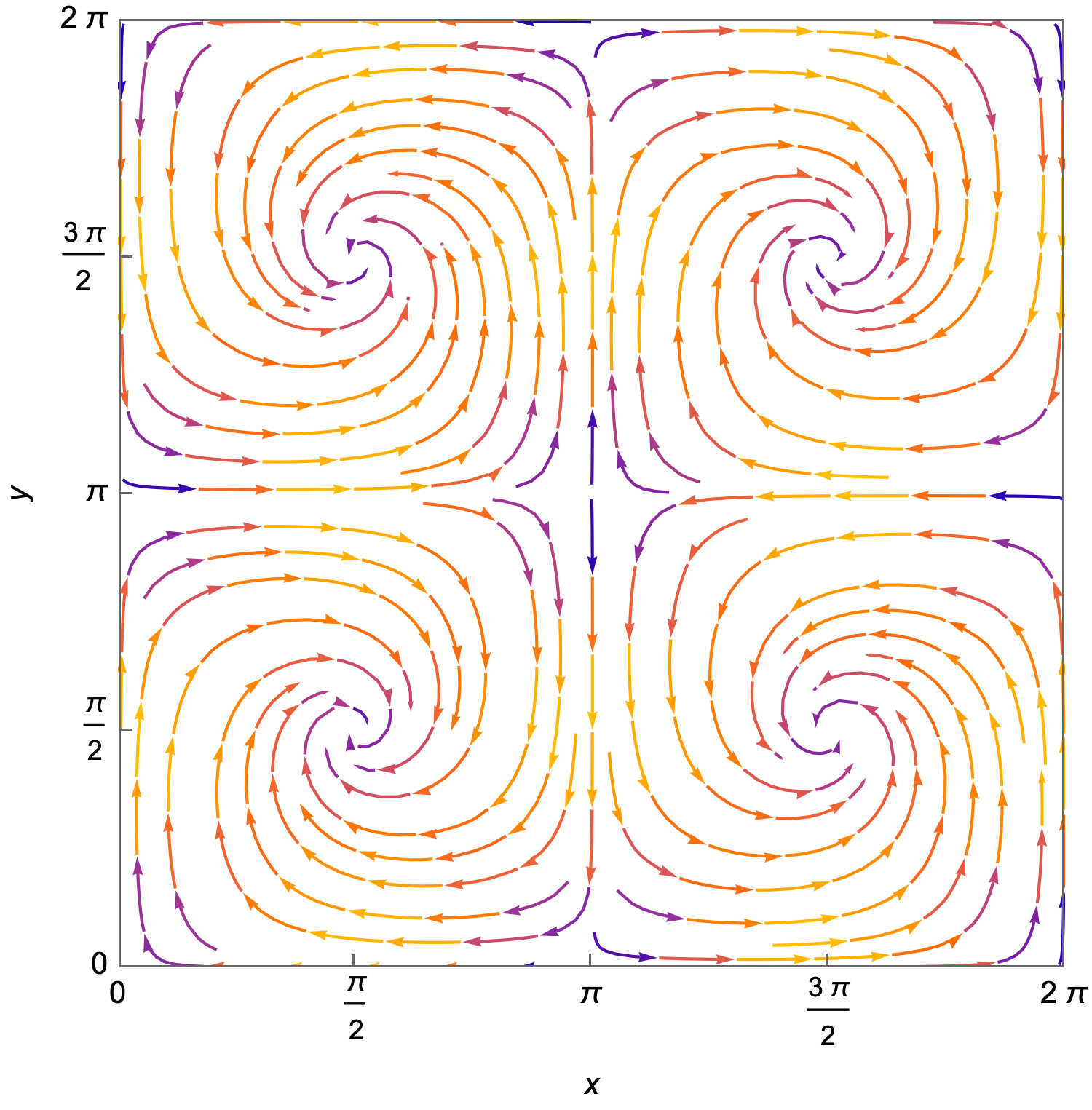} 
\caption{Phase portrait of the vector field~\eqref{eqn:intro-cellular}\\ with $\sigma = 0$.}
\label{fig:cellular-a}
\end{subfigure}
\begin{subfigure}{0.5\textwidth}
\includegraphics[height=0.8\linewidth]{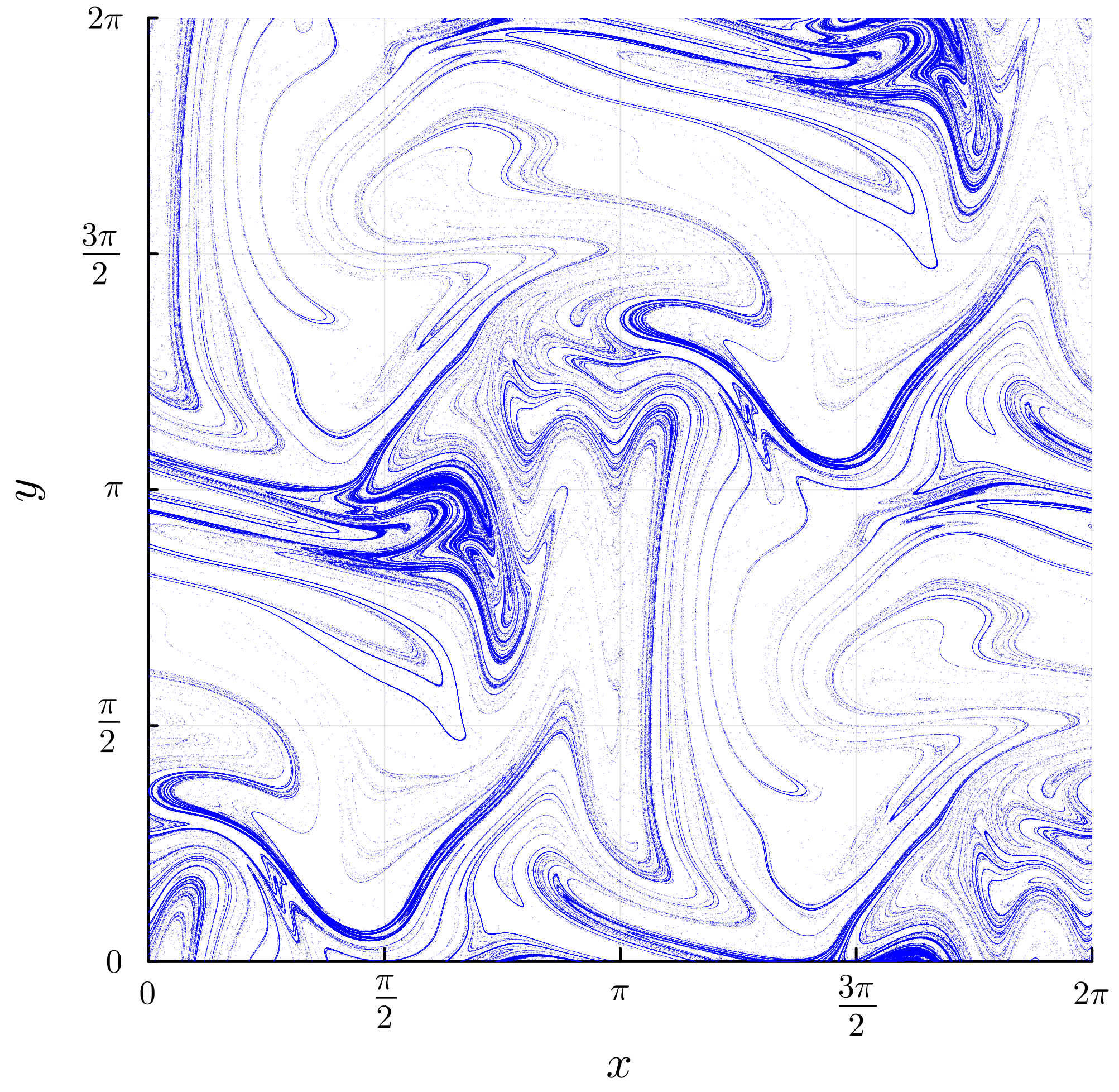} 
\caption{Snapshot of a chaotic random attractor\\ of~\eqref{eqn:intro-cellular} with $\sigma = \sqrt{2}$.}
\label{fig:cellular-b}
\end{subfigure}
\caption{Deterministic and random dynamics of~\eqref{eqn:intro-cellular}.}
\end{figure}
The positivity of the Lyapunov exponent has strong implications on the dynamics of $(\varphi_t)_{t\geq 0}$.
\begin{cor}\label{cor:chaos}
    For $\sigma = \sqrt{2}$, system~\eqref{eqn:intro-cellular} generates a random strange attractor and a random weak horseshoe.
\end{cor}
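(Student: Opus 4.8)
The plan is to derive the corollary from Theorem~\ref{thm:intro-cellular} by combining the strict positivity $\lambda>0$ with established structural results that extract chaotic behaviour from a positive top Lyapunov exponent; no further quantitative input is required, since all the computer-assisted work is already contained in Theorem~\ref{thm:intro-cellular} and only the sign of $\lambda$, not its sharp value, is used.

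First I would cast~\eqref{eqn:intro-cellular} in the random dynamical systems framework. The vector fields are analytic and, since $\mathbb{T}^2$ is compact, complete, so by Kunita's theory the Stratonovich SDE generates a stochastic flow of smooth diffeomorphisms of $\mathbb{T}^2$, i.e.\ a smooth random dynamical system over the shift on Wiener space. The diffusion matrix is $\sigma^2\mathrm{Id}$ with $\sigma=\sqrt{2}>0$, so the process is elliptic and a fortiori hypoelliptic; on the compact connected manifold $\mathbb{T}^2$ it then admits a unique stationary probability measure $\mu$ with a smooth positive density, and the lifted process on the projective bundle $\mathbf{P}\mathbb{T}^2$ has a unique stationary measure $\tilde\mu$ — so the standing ergodicity assumption of the paper holds, and the control system associated with~\eqref{eqn:proj-sde} is completely controllable (the noise vector fields $\sigma\partial_x,\sigma\partial_y$ already span the tangent space), which makes any accessibility/non-degeneracy hypothesis automatic. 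The integrability bound $\Esp{\sup_{0\le t\le1}\log^+\|D\varphi_t^{\pm1}\|}<\infty$, needed both for $\lambda$ to equal the a.s.\ Oseledets value and for the results below, follows at once from compactness of $\mathbb{T}^2$ together with smoothness of the flow.

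Granting $\lambda>0$ from Theorem~\ref{thm:intro-cellular}, the random weak horseshoe follows from the main result of~\cite{Lamb2025HorseshoesMaps}: a smooth random dynamical system on a compact manifold with an ergodic, hypoelliptic stationary measure and positive top Lyapunov exponent admits a random weak horseshoe, hence positive (fibre/topological) entropy and the associated Bernoulli-type sensitive dynamics along a random subsequence of times. For the random strange attractor, I would use that, $\mathbb{T}^2$ being compact and the RDS a flow of diffeomorphisms, a global random pullback attractor $\mathcal{A}(\omega)$ exists (Crauel--Flandoli), onto which $\mu$ lifts as an invariant Markov measure; since $\lambda>0$, this measure carries a positive exponent, so $\mathcal{A}(\omega)$ cannot reduce to a random equilibrium (by the synchronisation dichotomy, cf.~\cite{Baxendale1991StatisticalDiffeomorphisms,Flandoli2017SynchronizationNoise}), and by the entropy/dimension theory for random maps in the spirit of~\cite{Ledrappier1988EntropyTransformations} it carries nontrivial dynamical complexity — it is a random strange attractor.

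The main difficulty is not computational but one of bookkeeping: matching the precise regularity, ergodicity and non-degeneracy hypotheses of~\cite{Lamb2025HorseshoesMaps} (and of the adopted notion of random strange attractor) to the concrete system~\eqref{eqn:intro-cellular}. Each of these reduces to analyticity of the drift, compactness of $\mathbb{T}^2$, and ellipticity of the additive noise, so the verification should be routine; the genuine content of the corollary is the strict inequality $\lambda>0$, which is exactly what Theorem~\ref{thm:intro-cellular} supplies.
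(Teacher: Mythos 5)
Your proposal is correct and follows essentially the same route as the paper: the corollary is deduced directly from the positivity $\lambda>0$ of Theorem~\ref{thm:intro-cellular}, using that the flow is a family of independently composed random diffeomorphisms of the compact manifold $\mathbb{T}^2$ and invoking established structural results from the literature. The only cosmetic differences are that the paper obtains the random weak horseshoe from~\cite{Huang2017EntropySystems} rather than~\cite{Lamb2025HorseshoesMaps}, and gets the random strange attractor directly from~\cite{Ledrappier1988EntropyTransformations} without the detour through the Crauel--Flandoli pullback attractor and the synchronisation dichotomy.
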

\begin{proof}
Since the stochastic flow $(\varphi_t)_{t\geq 0}$ generated by~\eqref{eqn:intro-cellular} induces a family of independently composed random diffeomorphisms of the compact space $\mathcal{M} = \mathbb{T}^2$, the positivity of the Lyapunov exponent implies the existence of a random strange attractor~\cite{Ledrappier1988EntropyTransformations}. Furthermore, applying the results of~\cite{Huang2017EntropySystems} also shows the existence of a random weak horseshoe. We refer to these two references for precise definitions of these objects.
\end{proof}
Corollary~\ref{cor:chaos} is a typical example showcasing the complementarity of computer-assisted proofs with more traditional mathematical tools. Indeed, computer-assisted proofs enable the usage of powerful theoretical results whose assumptions
(in that case, the assumption that the system has a positive Lyapunov exponent) are
often very challenging to check in practice otherwise.

While Theorem~\ref{thm:intro-cellular} is about an SDE on a compact state space, our approach can also be used to enclose Lyapunov exponents of SDEs posed on unbounded state spaces, as illustrated by the next examples. 

\begin{thm}\label{thm:intro-pendulum}
Consider the stochastic flow $(\varphi_t)_{t\geq 0}$ on $\mathcal{M} = \mathbb{T} \times \mathbb{R}$ generated by the randomly forced pendulum equation
\begin{equation}\label{eqn:intro-pendulum}
    \begin{cases}
        \d x_t &= y_t\d t,\\
        \d y_t &= -(\kappa\sin x_t +\gamma y_t)\d t +\sigma \d B_t.
    \end{cases}
\end{equation}
    Then, with gravitational constant $\kappa = 2/3$, friction coefficient $\gamma = 1/4$ and noise strength $\sigma = 4$, the Lyapunov exponent $\lambda$ is positive with bounds
    \begin{equation*}
    \lambda = 0.0271763 \pm 4.29\times 10^{-3}>0.
    \end{equation*}
\end{thm}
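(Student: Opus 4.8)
The plan is to apply the general methodology described in Section~\ref{sec:method} to the randomly forced pendulum~\eqref{eqn:intro-pendulum}, with the extra care required because the state space $\mathcal{M} = \mathbb{T}\times\mathbb{R}$ is non-compact. First I would set up the projective process $(\xi_t)_{t\geq 0} = (\varphi_t, s_t)_{t\geq 0}$ on $\mathbf{P}\mathcal{M}$: since $\dim\mathcal{M} = 2$, the projective fibre $\mathbf{P}_x\mathcal{M}$ is one-dimensional, so I would parametrise $s_t$ by a single angle $\theta \in \mathbb{S}^1$ and write out explicitly the vector fields $\tilde{X}_0,\tilde{X}_1$ on $\mathbf{P}\mathcal{M} = \mathbb{T}\times\mathbb{R}\times\mathbb{S}^1$ obtained by linearising~\eqref{eqn:intro-pendulum}, together with the generator $\tilde{\mathcal{L}} = \tilde{X}_0 + \tfrac12\tilde{X}_1^2$ and the Furstenberg--Khasminskii integrand $Q$, which here is a trigonometric polynomial in $\theta$ with coefficients depending on $\sin x$, $\cos x$ (the noise coefficient is constant, so $Q$ is in fact explicit and fairly simple). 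I would also need to verify that the hypoellipticity / controllability hypotheses ensuring existence and uniqueness of $\tilde{\mu}$ are met for these parameter values, invoking the criteria referenced in the introduction (e.g.~\cite{SanMartin1986AFlow}), and that the integrability conditions of Lemma~\ref{lem:Bax} hold so that~\eqref{eqn:intro-adj-2} is valid.

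Next, following the recipe, I would compute numerically an approximation $\bar\lambda \approx 0.0271763$ (via Monte--Carlo simulation of~\eqref{eqn:intro-pendulum} and~\eqref{eqn:ftl}, as alluded to in Section~\ref{sec:cap}) and then solve numerically the Poisson problem $\tilde{\mathcal{L}}\bar u \approx Q - \bar\lambda$ to obtain an approximate solution $\bar u$ on $\mathbf{P}\mathcal{M}$; because $x$ and $\theta$ are periodic I would expand $\bar u$ in a Fourier basis in those two variables, and in the unbounded variable $y$ I would use a suitable basis (e.g.~Hermite functions or a polynomial$\times$Gaussian ansatz) adapted to the expected Gaussian-type decay of $\tilde{\mu}$ in $y$ — this choice is what makes the subsequent global estimates feasible. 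Setting $\bar Q \deq \tilde{\mathcal{L}}\bar u + \bar\lambda$, the identity~\eqref{eqn:lambda-Q} gives $|\lambda - \bar\lambda| = \left|\int_{\mathbf{P}\mathcal{M}} (Q - \bar Q)\,\d\tilde\mu\right|$, and it remains to bound this rigorously.

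The estimation step is where the non-compactness bites: the crude bound~\eqref{eqn:W=1-est} by $\|Q - \bar Q\|_\infty$ is not directly available, since $Q - \bar Q$ need not be bounded in $y$. Instead, as indicated in Section~\ref{sec:cap_framework}, I would establish a Foster--Lyapunov (drift) inequality for $\tilde{\mathcal{L}}$ with a weight function $W$ growing in $y$ (for the pendulum, a natural candidate built from the energy $\tfrac12 y^2 - \kappa\cos x$, or simply $1 + y^2$, should work given the linear friction $-\gamma y$), yielding an a priori moment bound $\int_{\mathbf{P}\mathcal{M}} W\,\d\tilde\mu \leq C$ with an explicit constant $C$. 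Then I would split $|\int (Q-\bar Q)\,\d\tilde\mu| \leq \int |Q - \bar Q| W^{-1}\cdot W\,\d\tilde\mu \leq \|(Q-\bar Q)/W\|_\infty \cdot C$, reducing everything to the rigorous computation, via interval arithmetic, of the constant $C$ in the drift inequality and of the weighted sup-norm $\|(Q-\bar Q)/W\|_\infty$ over $\mathbf{P}\mathcal{M}$ (the latter being a supremum of an explicit function over $\mathbb{T}\times\mathbb{R}\times\mathbb{S}^1$, which is finite and computable by branch-and-bound because $(Q-\bar Q)/W \to 0$ as $|y|\to\infty$ outside a compact region). The main obstacle I anticipate is two-fold and genuinely quantitative rather than conceptual: first, finding a drift function $W$ and constant $C$ that are simultaneously valid, rigorously verifiable, and tight enough that $C \cdot \|(Q-\bar Q)/W\|_\infty$ comes out as small as $4.29\times 10^{-3}$; and second, computing an approximate Poisson solution $\bar u$ accurate enough, with controlled behaviour at infinity, that $\|(Q-\bar Q)/W\|_\infty$ is correspondingly small — this requires the numerical scheme for $\tilde{\mathcal{L}}\bar u \approx Q - \bar\lambda$ to resolve the solution well on the unbounded domain, and then a careful interval-arithmetic evaluation of $\tilde{\mathcal{L}}\bar u$ (differentiation of the Fourier/Hermite expansion) to certify the final bound. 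Once these ingredients are in place, combining them through~\eqref{eqn:lambda-Q} yields $\lambda \in [\bar\lambda - 4.29\times 10^{-3}, \bar\lambda + 4.29\times 10^{-3}]$, and in particular $\lambda > 0$.
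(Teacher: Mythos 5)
Your overall strategy is exactly the paper's: projective process, verification of ergodicity via H\"ormander conditions and controllability, a numerical approximate solution $\bar{u}$ of the Poisson problem in a Fourier$\otimes$(weighted-polynomial)$\otimes$Fourier basis, and the weighted estimate $|\lambda-\bar\lambda|\leq \mu(W)\,\|(Q-\bar Q)/W\|_\infty$ with $\mu(W)$ controlled by a Foster--Lyapunov drift inequality. Two remarks on where your choices diverge from (or would undermine) the actual proof. First, a minor point: the paper does not obtain $\bar\lambda$ from Monte--Carlo; it solves the least-square problem for $\bar u$ alone and then \emph{defines} $\bar\lambda = P_0(Q-\tilde{\mathcal{L}}\bar u)$, which is both simpler and typically more accurate than a simulated value.

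The substantive issue is the pairing of the $y$-basis with the weight $W$. The paper expands $\bar u$ in Hermite \emph{polynomials} $h_m(\sqrt{\gamma}y/\sigma)$, precisely because these are eigenfunctions of the Ornstein--Uhlenbeck part $L_y$ of $\tilde{\mathcal{L}}$ (keeping the Galerkin matrix sparse) and because $Q$, being independent of $y$, lies in their span (assumption \ref{(S5)}). With such a polynomial basis, $Q-\bar Q$ is a polynomial of very high degree in $y$ (degree up to $552$ in the actual proof), so your proposed weights $1+y^2$ or the energy $\tfrac12 y^2-\kappa\cos x$ give $\|(Q-\bar Q)/W\|_\infty=\infty$: as the paper's remark after Proposition~\ref{prop:method} stresses, $W$ must grow exponentially. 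The proof takes $W=\tilde W=e^{\gamma y^2/(2\sigma^2)}$, which simultaneously serves as the Foster--Lyapunov function ($\mathcal{L}W\leq -cW+d\Ind{C}$, hence $\tilde\mu(W)\leq d/c$ by Meyn--Tweedie) and makes each $\sup_z|h_m(z)e^{-z^2/2}|$ finite and rigorously computable by enclosing the roots of the derivative. Your alternative of Hermite \emph{functions} (polynomial$\times$Gaussian) would make $Q-\bar Q$ bounded, but then the constant-in-$y$ function $Q$ is no longer in the span of finitely many basis elements, $L_y$ no longer acts diagonally, and the finite-expansion machinery used to evaluate $\bar Q$ rigorously breaks down. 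So the proposal is salvageable, but only with the exponential weight; as literally written, the estimation step would fail.
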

\begin{figure}[h]
\captionsetup[subfigure]{justification=centering}
\begin{subfigure}{0.5\textwidth}
\includegraphics[height=0.8\linewidth]{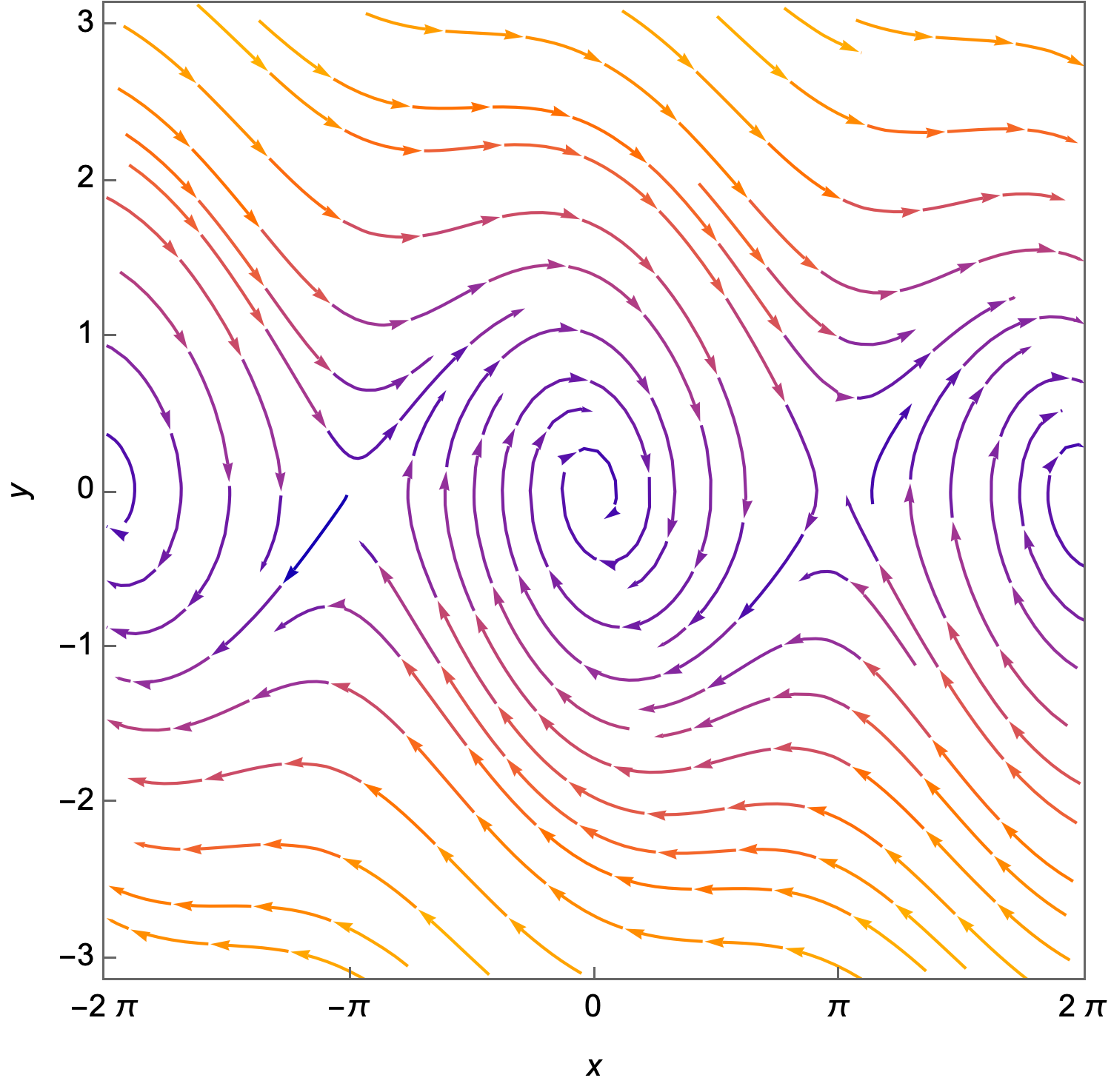} 
\caption{Phase portrait of the vector field~\eqref{eqn:intro-pendulum}\\ with $\sigma = 0$.}
\label{fig:pendulum-a}
\end{subfigure}
\begin{subfigure}{0.5\textwidth}
\includegraphics[height=0.8\linewidth]{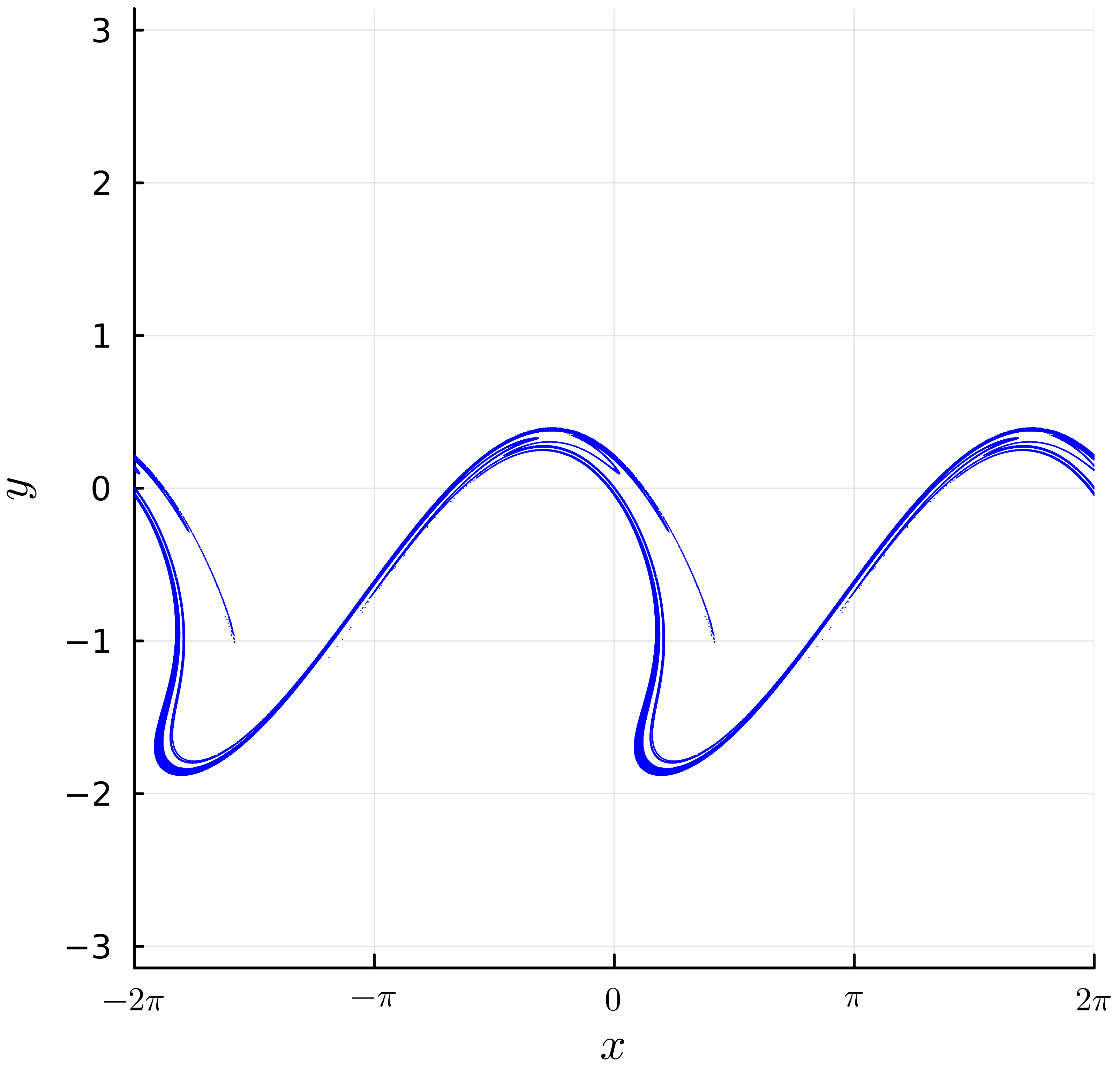} 
\caption{Snapshot of a chaotic random attractor\\ of~\eqref{eqn:intro-pendulum} with $\sigma = 4$.}
\label{fig:pendulum-b}
\end{subfigure}
\caption{Deterministic and random dynamics of~\eqref{eqn:intro-pendulum} for $\kappa = 2/3$ and $\gamma = 1/4$.}
\end{figure}

For system~\eqref{eqn:intro-pendulum}, proving that there is a positive Lyapunov exponent seems far out of reach without the use of our method. In particular, there are no clear mechanisms explaining the chaotic dynamics shown in Figure~\ref{fig:pendulum-b}. Moreover, the operators $\tilde{\mathcal{L}}$ and $\tilde{\mathcal{L}}^*$ are very much non-elliptic, since there is no noise acting on the $x$ variable, and also on the projective variable $s$ at all points of $\mathbf{P}\mathcal{M}$ (see Section~\ref{sec:pendulum} for the explicit formula). Due to this lack of ellipticity, one cannot use usual computer-assisted proofs to rigorously solve the Fokker--Planck equation~\eqref{eq:stationaryFK} or the Poisson equation~\eqref{eqn:intro-gen-Poisson}. The strategy based on the adjoint method proposed in this paper bypasses these obstructions, since it only relies on an approximate solution of the Poisson equation. Nonetheless, the lack of ellipticity still impacts the output of our method, as it makes it more challenging in practice to find an accurate approximate solution $\bar{u}$, which partially explains why the enclosure of $\lambda$ in Theorem~\ref{thm:intro-pendulum} is not as sharp as the one in Theorem~\ref{thm:intro-cellular}. 

Next, we show that our method is robust and can be combined with continuation methods recently introduced in~\cite{Breden2023AExpansions} (see also~\cite{Arioli2021UniquenessConditions}) to enclose Lyapunov exponents for a large parameter range.

\begin{thm}\label{thm:intro-Hopf}
    Consider the stochastic flow $(\varphi_t)_{t\geq 0}$ on $\mathcal{M} = \mathbb{R}^2$ generated by the Hopf normal form with additive noise
    \begin{equation}\label{eqn:intro-Hopf}
        \d\vect{x_t}{y_t} = \left[\vect{\alpha & -\beta}{\beta &\alpha}\vect{x_t}{y_t} - \vect{a & b}{-b & a}\vect{x_t}{y_t}(x_t^2+y_t^2)\right]\d t + \sigma \d \vect{B^1_t}{B^2_t}, \qquad a>0.
    \end{equation}
    Let $\beta \in \mathbb{R}$ and fix $a=\alpha = 4$, $\sigma = \sqrt{2}$. Let $\lambda_b$ denote the Lyapunov exponent of this system for a shear parameter $b$, and $b \mapsto \bar{\lambda}_b$ be the function represented in Figure~\ref{fig:hopf-a} (and whose precise description can be found at~\cite{Huggzz/Enclosure-of-Lyapunov-exponents}). Then, for \emph{all} $b\in [0,30]$
    $$|\lambda_b - \bar{\lambda}_b|\leq 3.41\times 10^{-4}.$$
\end{thm}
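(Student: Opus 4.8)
\textbf{Proof proposal for Theorem \ref{thm:intro-Hopf}.} The plan is to apply the adjoint-method framework summarised in Section~\ref{sec:method}, but now uniformly in the shear parameter $b$, combining it with the continuation techniques of~\cite{Breden2023AExpansions}. First I would write down explicitly the projective SDE~\eqref{eqn:proj-sde} and its generator $\tilde{\mathcal{L}}$ for the Hopf normal form~\eqref{eqn:intro-Hopf} on $\mathbf{P}\mathcal{M} = \mathbb{R}^2 \times \mathbb{S}^1$, together with the explicit integrand $Q$ from the Furstenberg--Khasminskii formula. Since the state space $\mathbb{R}^2$ is unbounded, the estimate~\eqref{eqn:W=1-est} must be replaced by its Foster--Lyapunov variant (Section~\ref{sec:cap_framework}): I would exhibit a Lyapunov function $V$ (the natural candidate is $V(x,y) = 1 + x^2 + y^2$, or a suitable power thereof) for which $\tilde{\mathcal{L}}V \leq C - c V$ with constants that can be chosen \emph{uniformly} for $b$ in the compact range $[0,30]$ — this is where the dissipativity coming from $a = 4 > 0$ and the fact that the shear term $b(x^2+y^2)$ is rotational (hence does not affect the radial dynamics) is crucial. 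This yields an a priori moment bound on $\tilde{\mu}$ that is independent of $b$, and I would then use a weighted estimate $\left|\int (Q - \bar{Q})\d\tilde{\mu}\right| \leq \|(Q-\bar{Q})/V\|_\infty \int V \d\tilde{\mu}$ in place of~\eqref{eqn:W=1-est}, invoking Lemma~\ref{lem:Bax} to justify that $\bar{u}$ is an admissible test function.

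The new ingredient relative to Theorems~\ref{thm:intro-cellular} and~\ref{thm:intro-pendulum} is that both the approximate Lyapunov exponent $\bar\lambda_b$ and the approximate Poisson solution $\bar u_b$ must now be constructed as functions of $b$ over the whole interval $[0,30]$. Following~\cite{Breden2023AExpansions}, I would represent $\bar\lambda_b$ and $\bar u_b$ as piecewise polynomial (Chebyshev) expansions in $b$, with coefficients that are themselves spectral expansions (Fourier in the angular variable $s$, and some appropriate basis — e.g. Hermite-type or Chebyshev after a rescaling — in the radial directions of $\mathbb{R}^2$) in the phase variables. The function $b \mapsto \bar\lambda_b$ in Figure~\ref{fig:hopf-a} is exactly such a data object (available at~\cite{Huggzz/Enclosure-of-Lyapunov-exponents}). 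Setting $\bar Q_b = \tilde{\mathcal{L}}\bar u_b + \bar\lambda_b$, the identity~\eqref{eqn:lambda-Q} holds for each fixed $b$, so it remains to bound $\sup_{b\in[0,30]}\|(Q - \bar Q_b)/V\|_\infty$ rigorously. Because $Q$, $\tilde{\mathcal{L}}$, and $\bar u_b$ all depend polynomially (or through elementary trigonometric/rational functions) on $b$ and on the phase variables, this supremum is the $L^\infty$ norm of an explicit function on the compact-in-$b$, weighted-compact-in-phase domain, which I would enclose by subdividing $[0,30]$ into finitely many subintervals and running interval arithmetic on each piece, exactly as in Proposition~\ref{prop:method} but with an extra interval parameter.

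The key steps, in order: (i) derive $\tilde{X}_i$, $\tilde{\mathcal{L}}$ and $Q$ for~\eqref{eqn:intro-Hopf}; (ii) establish a $b$-uniform Foster--Lyapunov inequality and the associated a priori bound on $\int V\d\tilde\mu$; (iii) numerically compute the $b$-parametrised family $(\bar\lambda_b, \bar u_b)$ by solving $\tilde{\mathcal{L}}\bar u_b \approx Q - \bar\lambda_b$ along the continuation in $b$; (iv) verify the admissibility of $\bar u_b$ as a test function via Lemma~\ref{lem:Bax} (growth/integrability against $\tilde\mu$), uniformly in $b$; (v) rigorously bound $\sup_{b\in[0,30]}\|(Q-\bar Q_b)/V\|_\infty$ by interval arithmetic over a partition of the parameter interval; (vi) combine (ii) and (v) through the weighted version of~\eqref{eqn:lambda-Q} to conclude $|\lambda_b - \bar\lambda_b| \leq 3.41\times 10^{-4}$ for all $b\in[0,30]$. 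I expect the main obstacle to be step (iii) together with the interaction between (iii) and (v): as the shear $b$ grows to $30$, the invariant density $\tilde w_b$ on the angular fibre becomes increasingly concentrated/oscillatory, so the approximate Poisson solution $\bar u_b$ needs more and more Fourier modes in $s$ to keep $\|Q - \bar Q_b\|$ small; controlling the number of continuation subintervals and spectral modes so that the \emph{worst-case} bound over all of $[0,30]$ stays below $3.41\times 10^{-4}$ — while keeping the rigorous computation tractable — is the delicate engineering part. A secondary difficulty is ensuring the Foster--Lyapunov constants in step (ii) are genuinely $b$-independent; this should follow from the rotational structure of the shear term, but needs to be checked carefully so that the weight $V$ (and hence the constant multiplying $\|(Q-\bar Q_b)/V\|_\infty$) does not degrade as $b\to 30$.
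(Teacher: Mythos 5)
Your overall architecture (adjoint method $+$ Chebyshev continuation in $b$ $+$ a weighted estimate replacing~\eqref{eqn:W=1-est}) matches the paper's, but there are two substantive problems, one of which is fatal as stated.

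First, the weight. You propose $V(x,y)=1+x^2+y^2$ (or a power thereof) obtained from a Foster--Lyapunov inequality, and then the bound $|\int (Q-\bar Q_b)\,\d\tilde\mu|\le \|(Q-\bar Q_b)/V\|_\infty\int V\,\d\tilde\mu$. This cannot work with a polynomially growing weight: the approximate solution $\bar u_b$ is built from a polynomial-type basis (in the paper, Hermite--Laguerre functions $r^{|m|}L_n^{(|m|)}(r^2)$ of arbitrarily high degree), so $\bar Q_b=\tilde{\mathcal L}\bar u_b+\bar\lambda_b$ contains radial polynomials of degree far exceeding any fixed power of $r$, and $\|(Q-\bar Q_b)/V\|_\infty=\infty$. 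As the paper notes, on an unbounded domain the weight must grow (super)exponentially. The paper takes $W(r)=e^{cr^4/(2\sigma^2)}$ with $0<c<a$, and — crucially — does not need a Foster--Lyapunov bound on $\mu(W)$ at all: after reducing to the $(r,\psi)$ coordinates the radial SDE decouples and its stationary marginal is known in closed form, $\mu(\d r,\mathbb T)\propto re^{-(ar^4/2-\alpha r^2)/\sigma^2}\d r$, so $\mu(W)$ is computed exactly in terms of $\mathrm{erf}$ and is automatically independent of $b$.

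Second, you work on the full three-dimensional projective bundle $\mathbb R^2\times\mathbb S^1$ with the hypoelliptic generator $\tilde{\mathcal L}$. The paper instead passes to polar coordinates, introduces $\psi=\theta-\chi_0/2-\phi$, and obtains a \emph{two-dimensional, uniformly elliptic} diffusion $(r_t,\psi_t)$ on $\mathbb R_+^*\times\mathbb T$ whose generator contains the polar Laplacian, with $\lambda_b=\int Q_b(r,\psi)\,\d\mu_b$. This reduction is what makes an accurate $\bar u_b$ computable and gives the explicit radial marginal used above; without it the numerics in the $b\to 30$ regime you correctly worry about would be far harder. A smaller omission: $Q_b$ and $\mathcal L_b$ depend on $b$ through $\sqrt{a^2+b^2}$, which is not polynomial in $b$, so the continuation requires approximating $\sqrt{a^2+b^2}$ by a Chebyshev polynomial $p(b)$ with a rigorous error $\varepsilon$ and splitting the final bound into three pieces ($\delta_1+\delta_2+\delta_3$); your plan of brute-force interval subdivision in $b$ does not account for keeping $\hat{\mathcal L}_b\bar u_b-\hat Q_b$ exactly representable in the finite (Hermite--Laguerre)--Chebyshev basis, which is what makes the supremum over $b$ computable.
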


\begin{cor}\label{cor:intro-Hopf}
    Consider $\lambda_b$ as in Theorem~\ref{thm:intro-Hopf}, then there exists $b^*\in (21.5323, 21.5381)$ such that $\lambda_{b^*} = 0$. Furthermore, $\lambda_b<0$ for $b\in[0,21.5322]$ and $\lambda_b>0$ for $b\in[21.5381,30]$.
\end{cor}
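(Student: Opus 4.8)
The plan is to deduce Corollary~\ref{cor:intro-Hopf} from Theorem~\ref{thm:intro-Hopf} together with the explicit numerical description of $b\mapsto\bar\lambda_b$ available at~\cite{Huggzz/Enclosure-of-Lyapunov-exponents}. The key observation is that Theorem~\ref{thm:intro-Hopf} gives the uniform bound $\lambda_b \in [\bar\lambda_b - \varepsilon, \bar\lambda_b + \varepsilon]$ with $\varepsilon = 3.41\times 10^{-4}$ for all $b\in[0,30]$, so it suffices to locate where the computed approximation $\bar\lambda_b$ crosses $0$ by more than $\varepsilon$ on either side.

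First I would use the data for $\bar\lambda_b$ to exhibit two concrete values $b_- = 21.5322$ and $b_+ = 21.5381$ with $\bar\lambda_{b_-} < -\varepsilon$ and $\bar\lambda_{b_+} > \varepsilon$; since Theorem~\ref{thm:intro-Hopf} is a statement for \emph{all} $b$ in the interval, monotonicity of $\bar\lambda$ is not actually needed — one only needs $\bar\lambda_b < -\varepsilon$ throughout $[0,21.5322]$ and $\bar\lambda_b > \varepsilon$ throughout $[21.5381,30]$, which is read off (rigorously, via interval arithmetic on the piecewise description of $\bar\lambda$) from the stored approximation. This immediately yields $\lambda_b \le \bar\lambda_b + \varepsilon < 0$ for $b\in[0,21.5322]$ and $\lambda_b \ge \bar\lambda_b - \varepsilon > 0$ for $b\in[21.5381,30]$, giving the two sign statements. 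Next, to produce the zero $b^*$, I would invoke continuity of $b\mapsto\lambda_b$ on $[21.5322, 21.5381]$ — this is where the continuation framework of~\cite{Breden2023AExpansions} underpinning Theorem~\ref{thm:intro-Hopf} is used, since it is precisely the smooth (in particular continuous) dependence of the Lyapunov exponent on the parameter $b$ that makes the bound $b\mapsto\bar\lambda_b$ a meaningful continuous enclosure. Having $\lambda_{21.5322} < 0$ and $\lambda_{21.5381} > 0$, the intermediate value theorem produces $b^* \in (21.5323, 21.5381)$ with $\lambda_{b^*} = 0$ (the left endpoint is pushed from $21.5322$ to $21.5323$ simply because the strict inequality $\lambda_{21.5322}<0$ forces the zero to lie strictly to the right, and one has room to spare).

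The only genuinely nontrivial ingredient is the continuity of $b \mapsto \lambda_b$; the rest is bookkeeping on the numerical output. I would therefore either cite the relevant regularity result from~\cite{Breden2023AExpansions} (or the analyticity of Lyapunov exponents for such families) or, more self-containedly, note that the enclosure $|\lambda_b - \bar\lambda_b| \le \varepsilon$ for all $b$ together with the continuity of the \emph{explicit} approximant $\bar\lambda_b$ forces $\lambda_b$ to have oscillation at most $2\varepsilon$ on small intervals — which is not quite continuity but is enough, combined with the sign change, to conclude that $\lambda_b$ cannot stay bounded away from $0$ on all of $[21.5322,21.5381]$ and hence (by the genuine continuity coming from the theory) attains the value $0$ somewhere strictly inside. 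I expect this continuity/regularity input to be the main point requiring care; everything else is a direct consequence of Theorem~\ref{thm:intro-Hopf} and a rigorous evaluation of the stored function $\bar\lambda$.
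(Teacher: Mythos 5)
Your proposal is correct and is essentially the argument the paper intends: the paper gives no separate written proof of this corollary, treating it as an immediate consequence of Theorem~\ref{thm:intro-Hopf} via rigorous evaluation of the stored $\bar\lambda_b$ (showing $\bar\lambda_b < -3.41\times10^{-4}$ on $[0,21.5322]$ and $\bar\lambda_b > 3.41\times10^{-4}$ on $[21.5381,30]$), continuity of $b\mapsto\lambda_b$, and the intermediate value theorem. You are right that continuity of $\lambda_b$ is the one input not supplied by the enclosure itself (the oscillation argument alone only forces $|\lambda_b|\le 2\varepsilon$ near the crossing, not an exact zero), and the paper does not make this input explicit either; one minor bookkeeping point is that to place $b^*$ in $(21.5323,21.5381)$ rather than $(21.5322,21.5381)$ you need the check $\bar\lambda_b<-\varepsilon$ to extend up to $21.5323$, which is part of the same rigorous evaluation of the stored function.
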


\begin{figure}[h]
\captionsetup[subfigure]{justification=centering}
\begin{subfigure}{0.5\textwidth}
\includegraphics[height=0.65\linewidth]{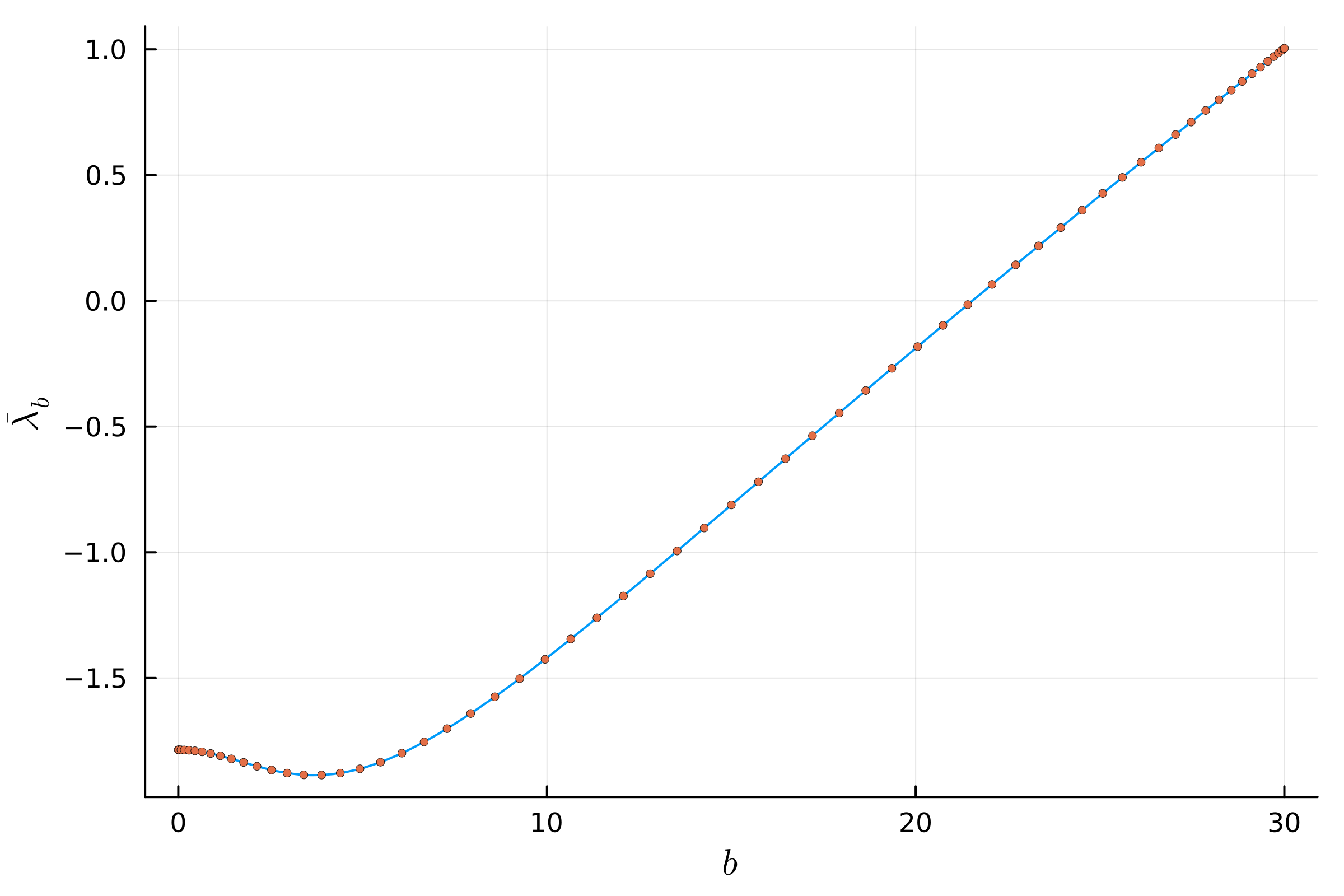} 
\caption{Graph of the function $b\mapsto \bar{\lambda}_b$; the orange\\ points are sampled at Chebyshev nodes.}
\label{fig:hopf-a}
\end{subfigure}
\begin{subfigure}{0.5\textwidth}
\includegraphics[height=0.65\linewidth]{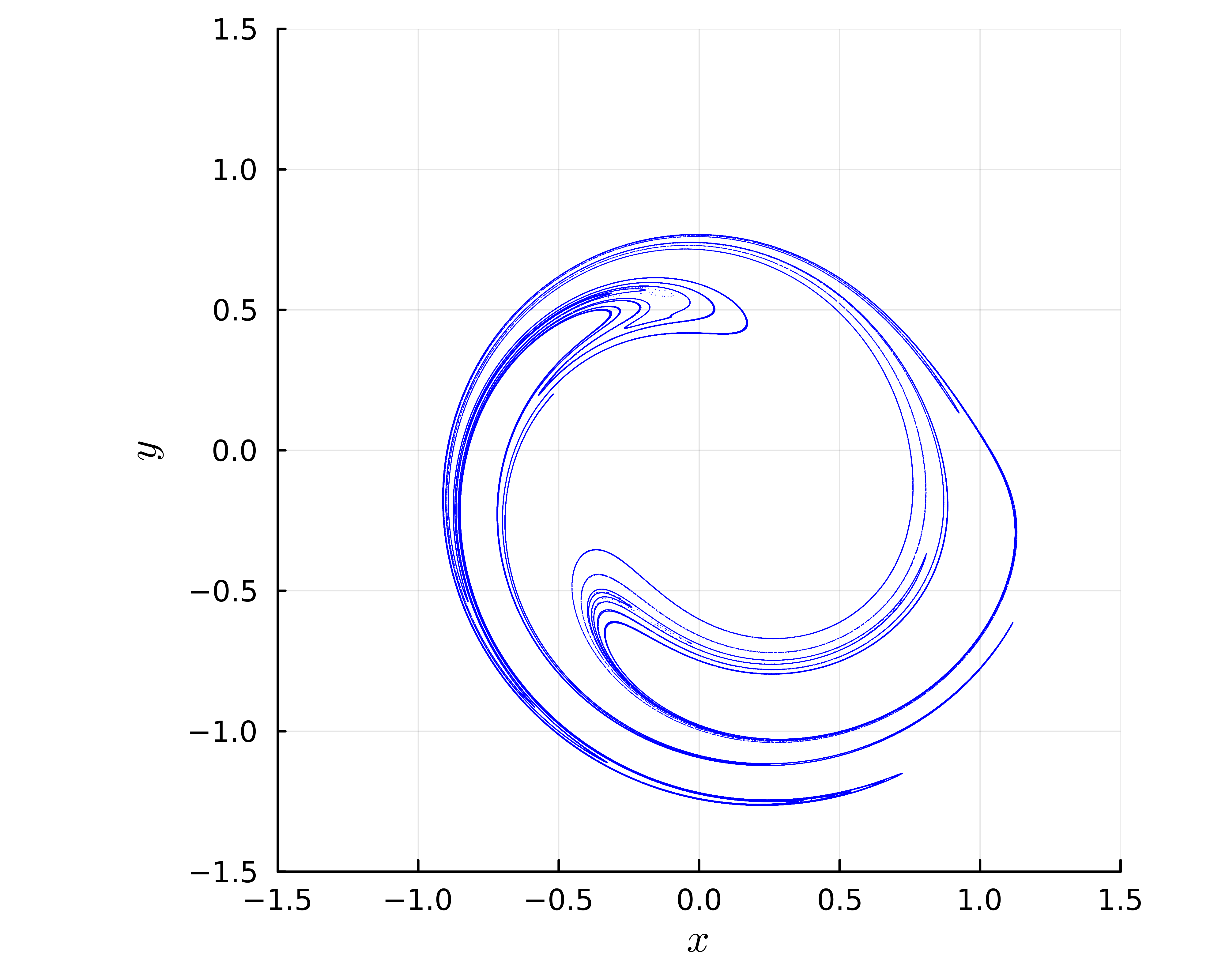} 
\caption{Snapshot of a chaotic random\\ attractor of~\eqref{eqn:intro-Hopf} with $b = 21.5381$.}
\label{fig:hopf-b}
\end{subfigure}

\caption{Quantitative and qualitative random dynamics induced by~\eqref{eqn:intro-Hopf} with $a = \alpha = 4$ and $\sigma = \sqrt{2}$.}
\end{figure}


The study of the Lyapunov exponent for system~\eqref{eqn:intro-Hopf} was initiated in~\cite{DeVille2011StabilitySystem}, where it was numerically conjectured that the Lyapunov exponent $\lambda$ was positive for $b$ large enough. After several attempts pointing to this result~\cite{Breden2023Computer-AssistedSystems, Doan2018HopfNoise,Engel2019BifurcationCycle}, the conjecture was recently proved in the asymptotic regime $b\to\infty$ in~\cite{Baxendale2024LyapunovNoise, Chemnitz2023PositiveNoise}. Theorem~\ref{thm:intro-Hopf} complements these results by covering the non-asymptotic regime. In particular, we obtain a precise estimate of the shear parameter value $b^*$ at which the transition from negative to positive Lyapunov exponent occurs.



As a final example, we consider the stochastic differential equation~\eqref{eqn:gen-sde} on $\mathcal{M}= \mathbb{R}^2\backslash\{0\}$ with the vector fields
\begin{equation}\label{eqn:bax_vfs}
\begin{aligned}X_0(x, y) = \left(\frac{\beta}{2}-\frac{b}{8}(x^2+y^2)\right)\vect{x}{y}-\frac{3a}{8\nu}(x^2+y^2)\vect{y}{-x}&,
\\X_1(x,y) = \frac{\sigma}{2\sqrt{2}\nu}\vect{x}{-y},\qquad X_2(x,y) = \frac{\sigma}{2\sqrt{2}\nu}\vect{y}{x},\qquad X_3(x,y) =& \frac{\sigma}{2\nu}\vect{y}{-x},
\end{aligned}
\end{equation}
with $-\sigma^2/4\nu^2<\beta\leq 0$, $a\geq 0$ and $b, \nu,\sigma>0$. This system appears as a result of a stochastic averaging procedure for the stochastic Duffing--van der Pol equation on $\mathcal{M}= \mathbb{R}^2\backslash\{0\}$
\begin{equation}\label{eqn:Duffing}
    \begin{cases}
        \d x_t &= -\nu y_t\d t\\
        \d y_t &=\left(\nu x_t+\varepsilon^2\beta y_t+\dfrac{a}{\nu} x^3_t-bx^2_ty_t\right)\d t -\dfrac{\varepsilon\sigma}{\nu}x_t\d B_t
    \end{cases}
\end{equation}
as $\varepsilon\to 0$. Baxendale~\cite{Baxendale2004StochasticEquation} conjectures the following for the Lyapunov exponent $\lambda_{\beta}$ associated to the stochastic differential equation~\eqref{eqn:gen-sde} with vector fields~\eqref{eqn:bax_vfs}: if $\sigma/\nu = 1$ and $b\nu/a = 1.24$, there exists $\beta^*\approx -0.17$ such that $\lambda_{\beta}<0$ for $-\sigma^2/4\nu^2 = -1/4<\beta<\beta^*$ and $\lambda_{\beta}>0$ for $\beta^*<\beta<0$.  We provide an answer to this conjecture by proving that a transition from negative to positive Lyapunov exponent indeed occurs as $\beta$ increases.
\begin{figure}[h]
\captionsetup[subfigure]{justification=centering}
\begin{subfigure}{0.5\textwidth}
\includegraphics[height=0.65\linewidth]{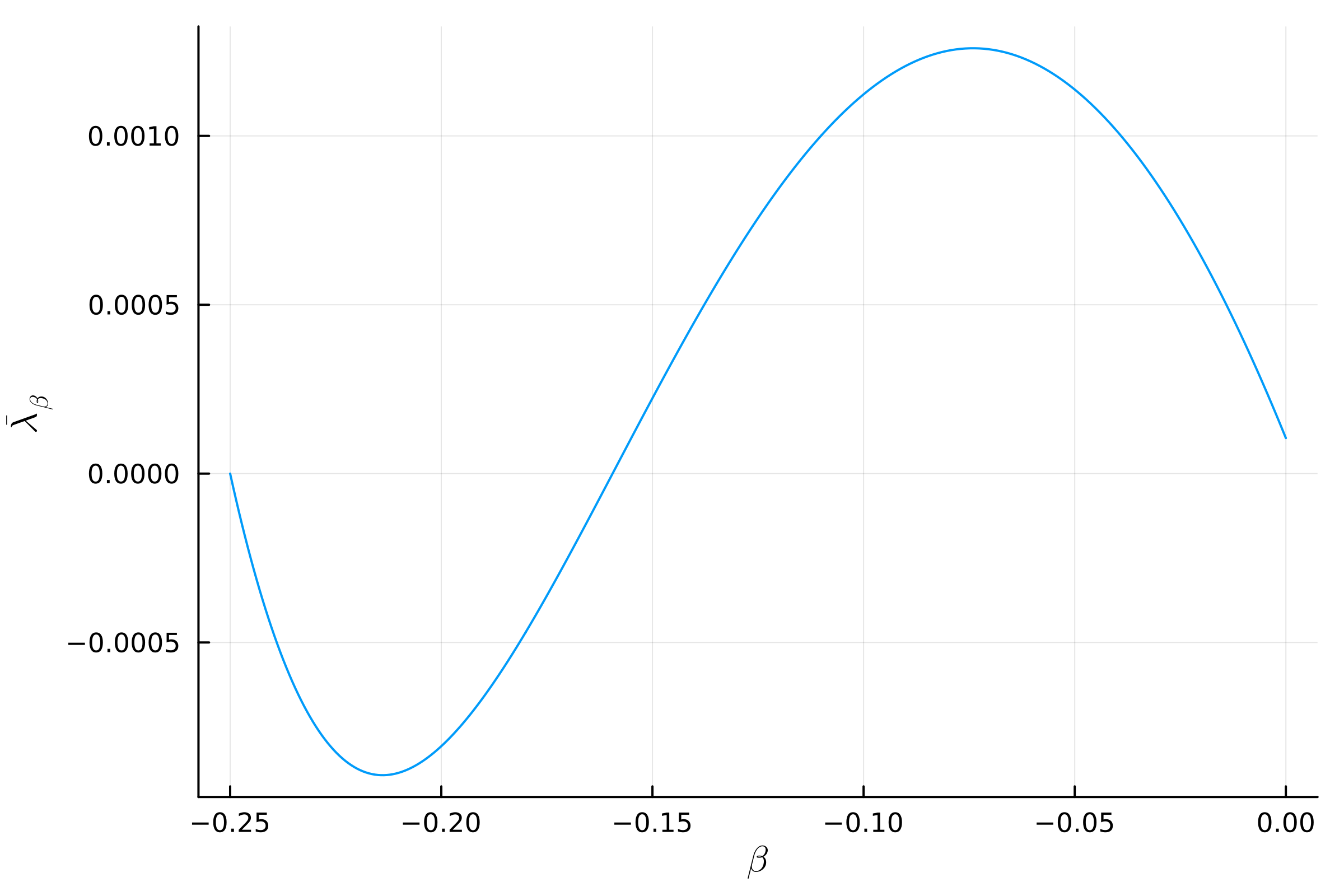} 
\caption{Graph of the function $\beta\mapsto \bar{\lambda}_{\beta}$ from Theorem~\ref{thm:intro-Duffing}.\\ \vspace{\baselineskip}}
\label{fig:Duff-a}
\end{subfigure}
\begin{subfigure}{0.5\textwidth}
\includegraphics[height=0.65\linewidth]{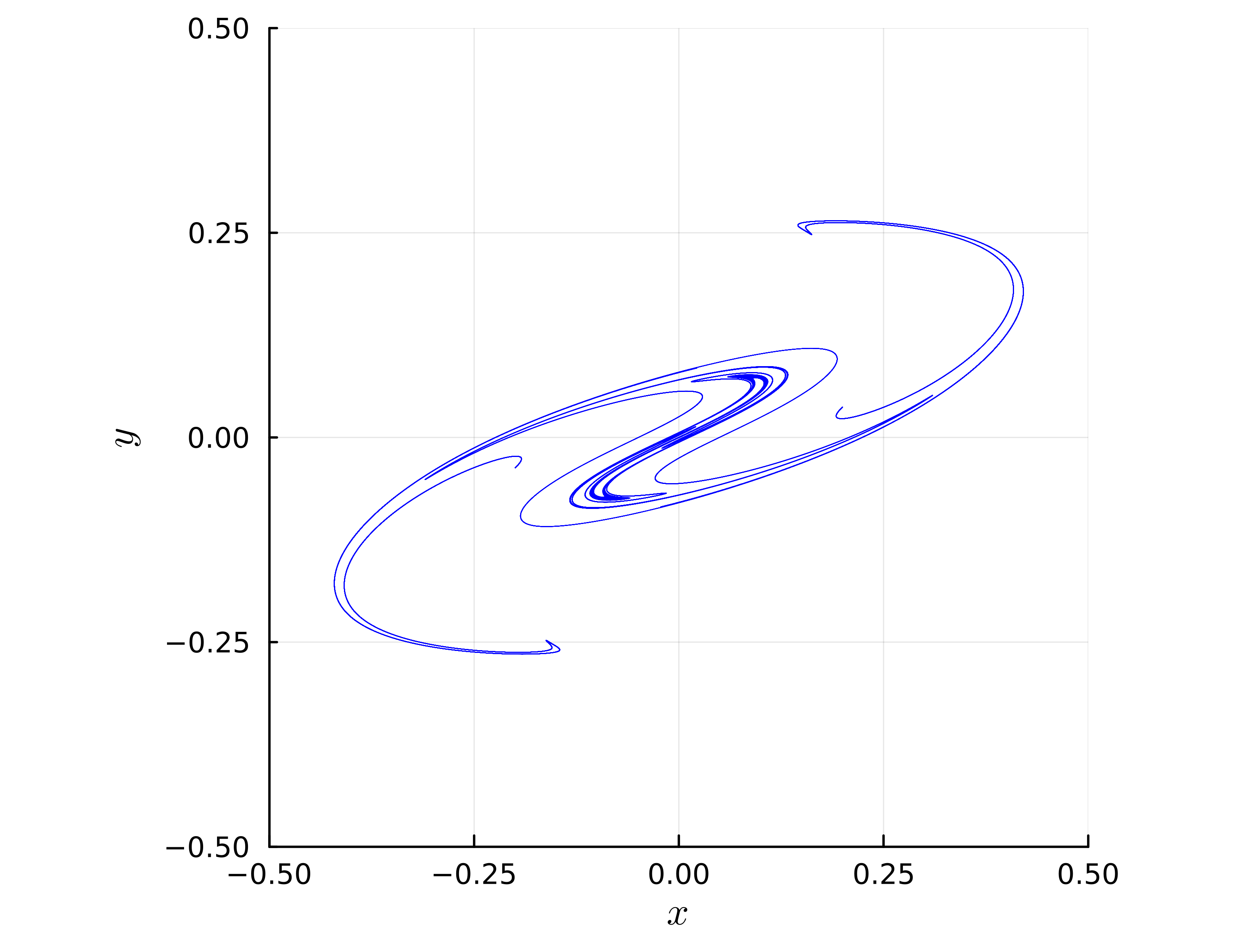} 
\caption{Snapshot of a chaotic random\\ attractor of~\eqref{eqn:bax_vfs} for $\beta = -0.1$.}
\label{fig:Duff-b}
\end{subfigure}

\caption{Quantitative and qualitative random dynamics induced by~\eqref{eqn:bax_vfs}, for $\sigma/\nu = a/\nu =  1$ and $b = 1.24$.}
\end{figure}

\begin{thm}\label{thm:intro-Duffing}
    Consider the stochastic differential equation~\eqref{eqn:gen-sde} with vector fields~\eqref{eqn:bax_vfs}. Assume that $\sigma/\nu = 1$ and $b\nu/a = 1.24$, and let $\beta \mapsto \bar{\lambda}_{\beta}$ be the function represented in Figure~\ref{fig:Duff-a} (and whose precise description can be found at~\cite{Huggzz/Enclosure-of-Lyapunov-exponents}). Then, for \emph{all} $\beta\in (-1/4,0]$,
    $$|\lambda_{\beta} - \bar{\lambda}_{\beta}|\leq 2.2\times10^{-6}.$$
\end{thm}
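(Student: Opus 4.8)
The plan is to apply the general method encapsulated in~\eqref{eqn:lambda-Q} (and its Foster--Lyapunov refinement, since here $\mathcal{M}=\mathbb{R}^2\setminus\{0\}$ is non-compact), uniformly over the parameter interval $\beta\in(-1/4,0]$, by combining it with the parametrised approximate-solution technique. First I would write out explicitly the projective SDE~\eqref{eqn:proj-sde} on $\mathbf{P}\mathcal{M}$ associated to the vector fields~\eqref{eqn:bax_vfs}: passing to polar-type coordinates (or directly to the angular variable on $\mathbf{P}_x\mathcal{M}$) one expects, as in Baxendale's averaging picture, a reduction to a process on a lower-dimensional compact factor (a circle or a cylinder), with the radial variable decoupling or being controllable. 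I would record the corresponding generator $\tilde{\mathcal{L}}$, the Furstenberg--Khasminskii integrand $Q$, and then verify the integrability/growth hypotheses of Lemma~\ref{lem:Bax} (the analogue of~\cite[Proposition B.1]{Baxendale2024LyapunovNoise}) so that~\eqref{eqn:intro-adj-2} is rigorously valid; if $Q$ or the candidate $\bar u$ are not globally bounded, I would exhibit a Foster--Lyapunov function and use the estimate from Section~\ref{sec:cap_framework} in place of~\eqref{eqn:W=1-est}.

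Next, rather than solving the Poisson equation~\eqref{eqn:intro-gen-Poisson} for each $\beta$ separately, I would follow Theorem~\ref{thm:intro-Hopf}'s strategy and construct a single family $\beta\mapsto(\bar\lambda_\beta,\bar u_\beta)$ depending smoothly (e.g.\ polynomially, via a Chebyshev expansion in $\beta$) on the parameter, obtained by numerically solving $\tilde{\mathcal{L}}_\beta\bar u_\beta\approx Q_\beta-\bar\lambda_\beta$ using the continuation framework of~\cite{Breden2023AExpansions}. Setting $\bar Q_\beta\deq\tilde{\mathcal{L}}_\beta\bar u_\beta+\bar\lambda_\beta$, identity~\eqref{eqn:lambda-Q} gives $|\lambda_\beta-\bar\lambda_\beta|=\big|\int_{\mathbf{P}\mathcal{M}}(Q_\beta-\bar Q_\beta)\,\d\tilde\mu_\beta\big|$, and I would bound the right-hand side by $\|Q_\beta-\bar Q_\beta\|_\infty$ (or the weighted analogue). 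The final step is the rigorous-numerics computation: using interval arithmetic as in Proposition~\ref{prop:method}, I would enclose $\tilde{\mathcal{L}}_\beta\bar u_\beta$ symbolically, form $Q_\beta-\bar Q_\beta$ as an explicit function of the phase-space variables and of $\beta$, and bound its sup norm uniformly over $\beta\in(-1/4,0]$ — for instance by subdividing both the phase space and the parameter interval into boxes and evaluating with interval arithmetic — arriving at the claimed bound $2.2\times 10^{-6}$. The positivity/negativity transition in the accompanying corollary would then follow by comparing the explicit $\bar\lambda_\beta$ against this error bar.

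The main obstacle I anticipate is twofold and both parts are numerical-analytic rather than conceptual. First, because the system is genuinely hypoelliptic but not elliptic (the noise vector fields $X_1,X_2,X_3$ in~\eqref{eqn:bax_vfs} span only a two-dimensional space at each point, and the induced diffusion on the projective/angular variable may be degenerate at some configurations), finding an approximate solution $\bar u_\beta$ accurate enough to push $\|Q_\beta-\bar Q_\beta\|_\infty$ down to $10^{-6}$ is delicate — this is exactly the difficulty flagged after Theorem~\ref{thm:intro-pendulum}. One needs a well-chosen basis (adapted coordinates near the degeneracy, Fourier in the angular variable, and a decay-respecting basis in the radial direction) and likely an a priori weight to tame behaviour as $|(x,y)|\to\infty$ or $\to 0$. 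Second, the $\beta$-uniformity: the near-bifurcation regime $\beta\approx\beta^*$ is where $\lambda_\beta$ changes sign, so the invariant density $\tilde\mu_\beta$ and hence the optimal $u_\beta$ may vary most rapidly there, forcing a finer Chebyshev resolution in $\beta$ and careful interval enclosure of the $\beta$-dependent coefficients; controlling the propagation of interval over-estimation across the whole of $(-1/4,0]$ without blow-up is the crux of the rigorous computation. I would expect most of the real work, as in the companion theorems, to lie in engineering $\bar u_\beta$ and in organising the interval-arithmetic bound efficiently, with the mathematical scaffolding (Lemma~\ref{lem:Bax}, the Foster--Lyapunov estimate, and~\eqref{eqn:lambda-Q}) being applied essentially off the shelf.
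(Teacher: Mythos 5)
Your overall scaffolding matches the paper's: polar coordinates reduce the problem to a process $(r_t,\psi_t)$ on $(0,\infty)\times\mathbb{T}$ whose radial marginal of $\mu_\beta$ is known in closed form, the adjoint method with a weight $W$ handles the non-compactness, and the Chebyshev-in-$\beta$ continuation of Section~\ref{subsec:hopf-cont} carries over (in fact more easily here, since $\mathcal{L}_\beta$ and $Q_\beta$ are already polynomial in $\beta$). But two ingredients that the paper needs in order to actually reach $2.2\times10^{-6}$ are missing from your plan. First, the Poisson solution $u_\beta$ is \emph{not} well approximated in a Fourier--Laguerre (purely polynomial) basis: the model problem $\mathcal{L}_\beta v=\tfrac{b}{8}r^2-(\tfrac{\beta}{2}+\tfrac18)$ is solved by $v=-\log r$, so $u_\beta$ inherits a logarithmic singularity at the origin. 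The paper therefore augments the basis to $\mathcal{V}=\{\log r\}\oplus\mathcal{H}_{M,N}$; the observation making this compatible with the a posteriori estimate is that $\mathcal{L}(\mathcal{V})\subset\mathcal{H}_{M+1,N+1}$, i.e., applying the generator to $\log r$ lands back in the polynomial space, and assumption~\ref{(S4)} is verified for the log via a second weight $\tilde W=W\log^2 r\neq W$ (the only example in the paper where $W\neq\tilde W$). Your phrase ``adapted coordinates near the degeneracy'' gestures at this but does not identify it; without the log term the residual $Q_\beta-\bar Q_\beta$ cannot be driven small enough.

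Second, the plain weighted bound $|\lambda_\beta-\bar\lambda_\beta|\le\mu_\beta(W)\sum_{m,n}|\epsilon_{mn}|\,\|f_{mn}/W\|_\infty$ (the $\ell^1$ estimate~\eqref{eqn:l1-est}) that you propose to apply uniformly is not sharp enough on the whole interval, because the Laguerre coefficients $\epsilon_{mn}$ of $Q_\beta-\bar Q_\beta$ decay slowly in $n$. The paper replaces it, where possible, by an $\ell^2$ bound obtained from Cauchy--Schwarz against the explicit radial density $r^{1+8\beta}e^{-br^2}$ combined with orthonormality of the generalised Laguerre polynomials $L_n^{(\alpha)}$ with respect to $z^{\alpha}e^{-z}\,\d z$; this requires $\beta>(\alpha-1)/8$, so $(-1/4,0]$ is split into three subintervals with different choices of $\alpha$ (Table~\ref{tab:bounds}), and the stated constant is the maximum of the resulting three bounds. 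Relatedly, the sup norm is never bounded by subdividing phase space into interval boxes: everything goes through the expansion coefficients as in Proposition~\ref{prop:method}, using $\|L_n/W\|_\infty=1$ for $W=e^{br^2/2}$. These two points --- the log-augmented basis and the $\ell^2$ refinement --- are the mathematically substantive, example-specific content of the proof, and your proposal as written would not reach the stated error bound without them.
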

In Theorem~\ref{thm:intro-Duffing}, we give a uniform error bound in $\beta$ for simplicity, but we in fact get tighter bounds for some values of $\beta$ (see Table~\ref{tab:bounds}). In particular, by refining the estimates close to the values of $\beta$ for which $\lambda_\beta$ vanishes, we obtain the following result.
\begin{thm}\label{thm:Duffing-average-sign}
    Assume that $\sigma/\nu = 1$ and $b\nu/a = 1.24$, then there exists $\beta^*\in (-0.159449,-0.159437)$ such that $\lambda_{\beta^*} = 0$. Furthermore, $\lambda_{\beta}<0$ for $\beta\in(-0.249997,-0.159449]$ and $\lambda_{\beta}>0$ for $\beta\in[-0.159437,0]$.
\end{thm}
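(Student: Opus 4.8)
**The plan is to apply the general methodology of this paper (Proposition~\ref{prop:method}, via estimate~\eqref{eqn:lambda-Q}) separately on a finite collection of small $\beta$-subintervals covering the two ``critical'' regions near $-0.1594$, obtaining enclosures of $\lambda_\beta$ that are tight enough to pin down the sign.** More precisely, Theorem~\ref{thm:intro-Duffing} already gives, for every $\beta\in(-1/4,0]$, an explicit approximate value $\bar\lambda_\beta$ together with the uniform bound $|\lambda_\beta-\bar\lambda_\beta|\le 2.2\times 10^{-6}$; but as noted in the paragraph following that theorem, the bound $|\lambda_\beta-\bar\lambda_\beta|$ coming from $\|Q-\bar Q\|_\infty$ (combined with whatever Foster--Lyapunov moment estimate controls the non-compact directions, as in Section~\ref{sec:cap_framework}) is actually $\beta$-dependent and can be made smaller on subregions. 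The first step is therefore to partition the relevant part of $(-1/4,0]$ into intervals $I_1,\dots,I_N$ and, on each $I_j$, rerun the rigorous computation: solve numerically the Poisson problem $\tilde{\mathcal L}\bar u\approx Q-\bar\lambda$ (with $\bar\lambda,\bar u$ now carrying interval $\beta$-dependence via the Chebyshev-in-$\beta$ continuation of~\cite{Breden2023AExpansions}), set $\bar Q=\tilde{\mathcal L}\bar u+\bar\lambda$, and rigorously bound $\sup_{\beta\in I_j}\|Q-\bar Q\|$ in the appropriate (possibly weighted) norm using interval arithmetic.

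**The second step is to extract the sign information.** On each subinterval $I_j$ not adjacent to the transition, the enclosure $\lambda_\beta\in[\bar\lambda_\beta - e_j,\bar\lambda_\beta+e_j]$ will have the whole interval strictly on one side of $0$, establishing $\lambda_\beta<0$ on $(-0.249997,-0.159449]$ and $\lambda_\beta>0$ on $[-0.159437,0]$ once the $I_j$'s are chosen to tile these two ranges. For the existence of $\beta^*$ with $\lambda_{\beta^*}=0$, I would invoke continuity of $\beta\mapsto\lambda_\beta$ — which follows from the continuity (indeed analyticity in $\beta$) of the data $\tilde X_i$, hence of the approximate solutions and of the error functional, or more robustly from the representation $\lambda_\beta=\int(Q_\beta-\tilde{\mathcal L}_\beta u)\,\d\tilde\mu_\beta$ and standard continuous-dependence results for the invariant measure $\tilde\mu_\beta$ — together with the intermediate value theorem applied to the two endpoint sign certificates: $\lambda_\beta<0$ at $\beta=-0.159449$ and $\lambda_\beta>0$ at $\beta=-0.159437$ force a zero in the open interval $(-0.159449,-0.159437)$.

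**The main obstacle is the sharpness of the enclosure near the transition.** Since $\lambda_\beta$ passes through $0$ near $\beta\approx-0.1594$, to certify the sign on subintervals as close as $10^{-5}$ from $\beta^*$ one needs $\|Q-\bar Q\|$ (in the relevant norm) to be substantially smaller than $|\bar\lambda_\beta|$ there, which on that scale may itself be only $\sim 10^{-6}$ or smaller. This forces a high-accuracy approximate solution $\bar u$ of the Poisson equation near those $\beta$ values — a high-order spectral (e.g. Fourier/Chebyshev) discretization on $\mathbf P\mathcal M$, refined resolution, and careful interval-arithmetic evaluation of $\tilde{\mathcal L}\bar u$ — and, on the non-compact state space $\mathbb R^2\setminus\{0\}$, a good Foster--Lyapunov / Lyapunov-function weight so that the $L^\infty$-type bound \eqref{eqn:W=1-est} can be replaced by a weighted bound $\left|\int (Q-\bar Q)\,\d\tilde\mu\right|\le \|(Q-\bar Q)/W\|_\infty\int W\,\d\tilde\mu$ with $\int W\,\d\tilde\mu$ explicitly controlled. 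Handling the singularity at the origin of $\mathcal M=\mathbb R^2\setminus\{0\}$ (or, equivalently, passing to polar/projective coordinates where the process lives on a compact circle-times-radius space) is the delicate modelling point that makes the weighted estimate and the a priori moment bounds work; everything else is a careful but routine repetition of the paper's general scheme on finitely many parameter windows.
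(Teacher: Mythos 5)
Your proposal is correct in outline and follows essentially the same route as the paper: the paper also tiles $(-1/4,0]$ with $\beta$-subintervals, runs the Chebyshev-in-$\beta$ continuation of the adjoint method on each (Section~\ref{sec:adj_Duffing} and Table~\ref{tab:bounds}), refines the windows adjacent to the transition value, and reads off the sign from the resulting enclosures, with the zero of $\lambda_\beta$ obtained from the two endpoint sign certificates. You also correctly flag the two genuine difficulties (the singularity at the origin, handled in the paper by adjoining $\log r$ to the Laguerre--Fourier basis, and the need for weighted Foster--Lyapunov estimates on the non-compact radial direction). The one ingredient your plan glosses over is that the generic weighted $L^\infty$ bound you propose corresponds to the paper's cruder $\ell^1$ estimate~\eqref{eqn:l1-est}, which is not sharp enough near $\beta^*$ because the Laguerre coefficients of $Q_\beta-\bar Q_\beta$ decay slowly; the paper instead exploits the explicitly known radial marginal~\eqref{eqn:mu-marg} of $\mu_\beta$ together with Cauchy--Schwarz and the orthonormality of the (generalised) Laguerre polynomials to obtain the $\ell^2$-type bounds~\eqref{eqn:l2-est} and~\eqref{eqn:l2-est-alpha}, which is what brings the error down to $\sim 10^{-7}$ on the critical windows.
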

Beyond the fact that Theorems~\ref{thm:intro-Duffing} and~\ref{thm:Duffing-average-sign} answer the conjecture from~\cite{Baxendale2004StochasticEquation}, this example is interesting because it features a singularity which needs to be handled at the origin. Another aspect of note is that, when $\beta = -1/4 =-\sigma^2/4\nu^2 $, the system loses its ergodicity. Nonetheless, our method allows to treat the system for all $\beta\in (-1/4,0]$, without any kind of singularity issue as $\beta$ approaches $-1/4$. These two points will be addressed in more detail in Section~\ref{sec:Duffing}. 

Note that the above conjecture from~\cite{Baxendale2004StochasticEquation}, which was obtained using simulations~\cite{Talay1999TheEquations}, turned out to be slightly imprecise regarding the value $\beta^*$ at which the transition from negative to positive Lyapunov exponent occurs. This illustrates the fact that one cannot reliably determine the sign of the Lyapunov exponent $\lambda_{\beta}$ using purely numerical simulations when the magnitude of $\lambda_{\beta}$ is small. In contrast, our method is very precise numerically and also provides guaranteed error bounds, which allows us to rigorously conclude regarding the sign of $\lambda_\beta$, even for $\beta$ relatively close to the transition value $\beta^*$.

Finally, using the averaging results obtained in~\cite{Baxendale2004StochasticEquation}, we can say the following about the Duffing--van der Pol system~\eqref{eqn:Duffing}.
\begin{cor}\label{cor:Duffing-sign}
    Consider the stochastic Duffing--van der Pol system~\eqref{eqn:Duffing}, with  $\sigma/\nu = 1$ and $b\nu/a = 1.24$. Then for $\varepsilon$ small enough, the associated Lyapunov exponent $\hat{\lambda}_{\beta, \varepsilon}$ is well-defined and $\hat{\lambda}_{\beta, \varepsilon} \sim \varepsilon^2\lambda_{\beta}$. Moreover, 
    \begin{itemize}
        \item if $\beta\in[-0.159437,0]$, there exists $\varepsilon_0>0$ such that for all $\varepsilon<\varepsilon_0$, $\hat{\lambda}_{\beta, \varepsilon}$ is positive;
        \item if $\beta\in(-0.249997,-0.159449]$, there exists $\varepsilon_0>0$ such that for all $\varepsilon<\varepsilon_0$, $\hat{\lambda}_{\beta, \varepsilon}$ is negative.
    \end{itemize}
\end{cor}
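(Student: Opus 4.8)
The plan is to derive Corollary~\ref{cor:Duffing-sign} from Theorem~\ref{thm:Duffing-average-sign} by invoking the stochastic averaging results of Baxendale~\cite{Baxendale2004StochasticEquation}. The key fact we borrow from that reference is that, for the stochastic Duffing--van der Pol equation~\eqref{eqn:Duffing}, as $\varepsilon\to 0$ the (top) Lyapunov exponent $\hat{\lambda}_{\beta,\varepsilon}$ exists for $\varepsilon$ small enough and satisfies the asymptotic relation $\hat{\lambda}_{\beta,\varepsilon}\sim \varepsilon^2\lambda_\beta$, where $\lambda_\beta$ is precisely the Lyapunov exponent of the averaged system~\eqref{eqn:gen-sde} with vector fields~\eqref{eqn:bax_vfs} treated in Theorem~\ref{thm:intro-Duffing}. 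This is the bridge: the averaging procedure relates the hard, genuinely time-dependent $\varepsilon$-dependent problem to the autonomous reduced problem whose Lyapunov exponent we have rigorously enclosed.

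The argument then proceeds in two short steps. First, I would record the asymptotic equivalence $\hat{\lambda}_{\beta,\varepsilon}\sim\varepsilon^2\lambda_\beta$ as a direct citation of~\cite{Baxendale2004StochasticEquation}, noting that the hypotheses $\sigma/\nu=1$ and $b\nu/a=1.24$ (and $\beta$ in the relevant range, which keeps us within the ergodic regime $-\sigma^2/4\nu^2<\beta\le 0$) are exactly those under which that reference's averaging theorem applies. Second, I would feed in the sign information from Theorem~\ref{thm:Duffing-average-sign}: for $\beta\in[-0.159437,0]$ we have $\lambda_\beta>0$, so since $\hat{\lambda}_{\beta,\varepsilon}/(\varepsilon^2\lambda_\beta)\to 1$, there is $\varepsilon_0>0$ (depending on $\beta$, or uniformly on a compact subinterval bounded away from the transition if one wants) such that $\hat{\lambda}_{\beta,\varepsilon}>0$ for all $\varepsilon<\varepsilon_0$; symmetrically, for $\beta\in(-0.249997,-0.159449]$ we have $\lambda_\beta<0$, hence $\hat{\lambda}_{\beta,\varepsilon}<0$ for $\varepsilon$ small. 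That is all the corollary asserts.

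There is essentially no new mathematics to do here, and the main point of care — rather than a genuine obstacle — is to make sure the objects match: one must check that the Lyapunov exponent $\lambda_\beta$ appearing in Baxendale's averaging statement is literally the Lyapunov exponent of the SDE~\eqref{eqn:gen-sde}--\eqref{eqn:bax_vfs} as we have defined it via~\eqref{eqn:ftl}, including the normalization of time and noise, and that the ergodicity assumption underlying our Theorem~\ref{thm:Duffing-average-sign} (the unique stationary measure $\tilde\mu$ on $\mathbf{P}\mathcal{M}$) is compatible with the framework in~\cite{Baxendale2004StochasticEquation}. Once that bookkeeping is done, the implication is immediate from the definition of the asymptotic relation $\sim$. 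The only subtlety worth flagging explicitly in the write-up is that $\varepsilon_0$ genuinely depends on $\beta$ (and blows up as $\beta\to\beta^*$), which is why the statement is phrased pointwise in $\beta$ rather than uniformly; this is unavoidable since $\lambda_\beta\to 0$ at the transition and the averaging error is only controlled relative to $\varepsilon^2\lambda_\beta$.
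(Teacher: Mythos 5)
Your proof is correct and follows exactly the same route as the paper: cite Baxendale's averaging theorem (\cite[Theorem 1.1.(iii)]{Baxendale2004StochasticEquation}) for the existence of $\hat{\lambda}_{\beta,\varepsilon}$ and the asymptotic $\hat{\lambda}_{\beta,\varepsilon}\sim\varepsilon^2\lambda_\beta$, then transfer the signs from Theorem~\ref{thm:Duffing-average-sign}. Your additional remarks on $\beta$-dependence of $\varepsilon_0$ and on matching conventions are sensible bookkeeping but do not change the argument.
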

\begin{proof}
    The facts that $\hat{\lambda}_{\beta, \varepsilon}$ is well-defined and that $\hat{\lambda}_{\beta, \varepsilon} \sim \varepsilon^2\lambda_{\beta}$ as $\varepsilon\to 0$ follow from~\cite[Theorem 1.1.(iii)]{Baxendale2004StochasticEquation}. Corollary~\ref{cor:Duffing-sign} is then a direct consequence of Theorem~\ref{thm:Duffing-average-sign}.
\end{proof}
 


The remainder of the paper is organised as follows. The principle of the adjoint method is recalled in Section~\ref{sec:cap}, where we also introduce a general framework allowing us to turn this idea into a computer-assisted proof, together with a first simple example. We then recall how the Furstenberg--Khasminskii formula can be derived and how the ergodicity of the projective process can be shown in Section~\ref{sec:generalitiesLyap}. In Sections~\ref{sec:cellular},~\ref{sec:pendulum},~\ref{sec:hopf} and~\ref{sec:Duffing}, we provide more background on each of the systems studied in Theorems~\ref{thm:intro-cellular},~\ref{thm:intro-pendulum},~\ref{thm:intro-Hopf} and \ref{thm:intro-Duffing} respectively, together with the proofs of each Theorem. Finally, Section~\ref{sec:outlook} discusses potential further applications of our computer-assisted adjoint method. The computer-assisted parts of the proofs can be reproduced using the code deposited in~\cite{Huggzz/Enclosure-of-Lyapunov-exponents}.

\section{The adjoint method using an approximate solution}
\label{sec:cap}

\subsection{General framework}
\label{sec:cap_framework}
In this section, we lay out the main method used in this paper to enclose ergodic averages. Given a diffusion process $(x_t)_{t\geq 0}$ on a $\sigma$-compact manifold $\mathcal{X}$ (i.e.~the countable union of compact submanifolds) induced by a stochastic differential equation (SDE) with complete vector fields
\begin{equation}\label{eq:SDE_method}
\d x_t  = V_0(x_t)\d t + \sum_{i=1}^\ell V_i(x_t)\circ\d B^i_t,
\end{equation}
assumed to have a unique stationary measure $\mu(\d x)  = w(x) \d x $, and a $\mathcal{C}^2$ $\mu$-integrable function $q: \mathcal{X}\to \mathbb{R}$, our aim is to rigorously enclose the integral
\begin{equation}\label{eq:integral_method}
\mathcal{I} = \int_{\mathcal{X}}q \d \mu.
\end{equation}
This includes the problem of enclosing Lyapunov exponents discussed in the introduction, for which the diffusion process to be considered is the projective process given by~\eqref{eqn:proj-sde} (see also Section~\ref{sec:FK}), but the method presented here applies to any ergodic average.

In order to study $\mathcal{I}$, we consider the operator
$$\mathcal{L} = V_0 +\frac{1}{2}\sum_{i=1}^\ell V_i^2 : \mathcal{C}^{2}(\mathcal{X}) \to \mathcal{C}^{0}(\mathcal{X}),$$
which is the (infinitesimal) generator of the solution process $(x_t)_{t\geq 0}$ satisfying Eq.~\eqref{eq:SDE_method}. We aim to find $\bar{u} \in \mathcal{C}^{2}(\mathcal{X})$ and a constant $\bar{\mathcal{I}}$ such that
\begin{equation}\label{eqn:approx-problem}
    \mathcal{L}\bar{u} \approx q  - \bar{\mathcal{I}}.
\end{equation}
One may approach this problem by first approximating $\mathcal{I}$ with a numerical scheme such as a Monte-Carlo method to produce a candidate $\bar{\mathcal{I}}\approx \mathcal{I}$, and then solve for $\bar{u}$. However, we show below that such a step is not necessary. Indeed, because $\int \mathcal{L}\bar{u} \d \mu=0$, for any $(\bar{u},\bar{\mathcal{I}})$ satisfying~\eqref{eqn:approx-problem} we automatically get $\bar{\mathcal{I}} \approx \mathcal{I}$. This allows us to first solve for $\bar{u}$ only, and then recover $\bar{\mathcal{I}}$, which often leads to a much more precise approximation of $\mathcal{I}$. In this paper, we work under the following setting:


\begin{enumerate}[label=\textnormal{(\arabic*)}]
    \myitem[(S1)]\label{(S1)} We have a Hilbert (orthonormal) basis $\{f_n\}_{n\in \mathbb{N}}\subset \mathcal{C}^{2}(\mathcal{X})$ for a judicious Hilbert space $\left(\mathcal{H},(\cdot, \cdot)\right)$ such that $f_0 = 1$.
    \myitem[(S2)]\label{(S2)} For all $N\in \mathbb{N}$, there exists $M\geq N$ such that $\mathcal{L}$ maps $P_N(\mathcal{H}) = \mathrm{Span}\{f_n\}_{n=0}^N$ to $P_M(\mathcal{H}) = \mathrm{Span}\{f_m\}_{m =0}^M$, where $P_n$ denotes the projection in $\mathcal{H}$ onto $\mathrm{Span}\{f_i\}_{i=0}^n$.
    \myitem[(S3)]\label{(S3)} There is a $\mathcal{C}^{2}$ positive function $W : \mathcal{X} \rightarrow \R_+^*:=(0,\infty)$ such that
    $$\sup_{m\in \mathbb{N}} \left\|\frac{f_m}{W}\right\|_{\infty} <\infty,$$
    and $\mu(W) < \infty$. Furthermore, $\mu(W)$ and these suprema can be bounded explicitly.
    \myitem[(S4)]\label{(S4)} There exists a $\mathcal{C}^{2}$ function $\tilde{W} : \mathcal{X} \rightarrow \R_+^*$ such that $\mathcal{L}\tilde{W} \leq c \tilde{W}$ for some $c<\infty$ and that for all $n\in \mathbb{N}$ 
    $$\frac{|f_n(x)|}{\tilde{W}(x)} \longrightarrow 0 \qquad \mbox{as $|f_n(x)|\to \infty$}.$$
    \myitem[(S5)]\label{(S5)} There is $n_0\in \mathbb{N}$ such that $q\in \mathrm{Span}\{f_n\}_{n=0}^{n_0}$.
\end{enumerate}

In practice, we take $W = \tilde{W}$, except for the Duffing--van der Pol example in Section~\ref{sec:Duffing}; and if $\mathcal{X}$ is compact, we will simply take $W = \tilde{W} = 1$.

In the heuristic description of our approach in Section~\ref{sec:method}, we heavily used the fact that elements in the range of $\mathcal{L}$ have $\mu$-mean equal to zero. The following lemma, which is a direct rewriting of~\cite[Proposition B.1]{Baxendale2024LyapunovNoise} adapted to our framework, gives sufficient conditions allowing the use of this fact rigorously.
\begin{lem}\label{lem:Bax}
    Under assumptions~\ref{(S1)}--\ref{(S4)}, for all $n\in \N$,
    \begin{equation}\label{eqn:adjoint-prop}
    \int_{\mathcal{X}}(\mathcal{L}f_n)\d\mu = 0.
    \end{equation}
    Thus, by linearity, for any $u\in P_N(\mathcal{H})$,
    $$\int_{\mathcal{X}}(\mathcal{L}u)\d\mu = 0.$$
\end{lem}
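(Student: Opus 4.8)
The plan is to establish~\eqref{eqn:adjoint-prop} directly by invoking the cited result~\cite[Proposition B.1]{Baxendale2024LyapunovNoise}, which asserts that for a diffusion with generator $\mathcal{L}$ and unique stationary measure $\mu$, one has $\int_{\mathcal{X}}(\mathcal{L}g)\d\mu = 0$ for any $g\in\mathcal{C}^2(\mathcal{X})$ satisfying suitable integrability and growth conditions — typically that $g$, $\mathcal{L}g$, and the ``carré du champ'' $\sum_i (V_i g)^2$ are controlled by some function with finite $\mu$-mass that also serves as a Foster--Lyapunov function for the process (so that the process does not explode and the martingale/ergodic argument goes through). The work is therefore to check that each basis element $f_n$ meets these hypotheses, using precisely assumptions~\ref{(S1)}--\ref{(S4)}.

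First I would fix $n\in\mathbb{N}$ and observe that $f_n\in\mathcal{C}^2(\mathcal{X})$ by~\ref{(S1)}, so $\mathcal{L}f_n$ is well-defined and continuous. Next, by~\ref{(S2)} applied with $N=n$, we have $\mathcal{L}f_n\in P_M(\mathcal{H})=\mathrm{Span}\{f_m\}_{m=0}^M$ for some $M\geq n$, so $\mathcal{L}f_n=\sum_{m=0}^M c_m f_m$ is a finite linear combination of basis elements. Now~\ref{(S3)} gives a positive $\mathcal{C}^2$ function $W$ with $\sup_m\|f_m/W\|_\infty<\infty$ and $\mu(W)<\infty$; hence both $|f_n|\leq C_n W$ and $|\mathcal{L}f_n|\leq C_n' W$ pointwise for explicit constants, so $f_n$ and $\mathcal{L}f_n$ are $\mu$-integrable. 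Finally~\ref{(S4)} supplies the Foster--Lyapunov function $\tilde{W}$ with $\mathcal{L}\tilde{W}\leq c\tilde{W}$, together with the growth condition $|f_n(x)|/\tilde{W}(x)\to 0$ as $|f_n(x)|\to\infty$, which is exactly what is needed to rule out explosion and to justify taking expectations through the Dynkin formula / dominated convergence when sending time to infinity. With all hypotheses of~\cite[Proposition B.1]{Baxendale2024LyapunovNoise} verified, that proposition yields $\int_{\mathcal{X}}(\mathcal{L}f_n)\d\mu=0$.

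For the second assertion, given $u\in P_N(\mathcal{H})$ write $u=\sum_{n=0}^N a_n f_n$; then $\mathcal{L}u=\sum_{n=0}^N a_n \mathcal{L}f_n$ by linearity of $\mathcal{L}$, and since this is a finite sum of $\mu$-integrable functions each with vanishing $\mu$-integral, $\int_{\mathcal{X}}(\mathcal{L}u)\d\mu=\sum_{n=0}^N a_n\int_{\mathcal{X}}(\mathcal{L}f_n)\d\mu=0$.

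\textbf{Main obstacle.} Since the heavy lifting is delegated to~\cite[Proposition B.1]{Baxendale2024LyapunovNoise}, the only real point requiring care is matching the abstract hypotheses of that proposition to~\ref{(S1)}--\ref{(S4)} in our setting — in particular verifying that the carré-du-champ term $\sum_i(V_i f_n)^2$ is also dominated by a $\mu$-integrable function. This should follow from the same reasoning: $V_i f_n$ is again a finite combination of basis elements (or at least controlled by $W$) because $\mathcal{L}$ and the individual $V_i$ interact compatibly with the filtration $\{P_N(\mathcal{H})\}$, so $\ref{(S2)}$ and $\ref{(S3)}$ together give the bound; if the paper's convention instead folds this requirement into~\ref{(S3)}--\ref{(S4)}, then no extra argument is needed. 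I do not anticipate a serious difficulty here, as the lemma is explicitly stated to be ``a direct rewriting'' of the cited proposition adapted to this framework.
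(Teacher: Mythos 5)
Your proposal is correct and follows essentially the same route as the paper: the paper's proof likewise notes that~\ref{(S2)} and~\ref{(S3)} give the $\mu$-integrability of $f_n$ and $\mathcal{L}f_n$, and that these together with~\ref{(S4)} and $f_n\in\mathcal{C}^2$ allow one to invoke~\cite[Proposition B.1]{Baxendale2024LyapunovNoise}, with the second assertion following by linearity. Your extra caution about the carr\'e-du-champ term is not something the paper addresses; its proof treats the integrability of $f_n$ and $\mathcal{L}f_n$ plus the Lyapunov condition of~\ref{(S4)} as the complete hypothesis set of the cited proposition.
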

\begin{proof}
    Fix $n\in \N$, it is clear that under assumptions~\ref{(S2)} and~\ref{(S3)}, $f_n$ and $\mathcal{L}f_n$ are $\mu$-integrable. These two integrability conditions together with assumption~\ref{(S4)} and the fact that $f_n$ is $\mathcal{C}^2$ imply~\eqref{eqn:adjoint-prop} by~\cite[Proposition B.1]{Baxendale2024LyapunovNoise}.
\end{proof}


The problem~\eqref{eqn:approx-problem} is then expressed and solved numerically with respect to the Hilbert basis $\{f_n\}_{n\in \N}$, i.e.~we represent the action of $\mathcal{L} :P_N(\mathcal{H})\to P_M(\mathcal{H})$ by its Galerkin representation, the finite-dimensional matrix
$$\left(f_m, \mathcal{L} f_n\right)_{m\leq M, n\leq N},$$
and all the subsequent operations are equivalent to finite-dimensional linear algebra: we find $\bar{u}$, a \emph{numerical} solution of the least-square problem
\begin{equation}
\label{eq:leastsquare}
    \underset{u \in P_N(\mathcal{H})}{\operatorname{argmin}}\|P_0^{\perp}(\mathcal{L}u - q)\|^2_{\mathcal{H}},
\end{equation}
or of the linear problem
$$(P_NP_0^{\perp} \mathcal{L}P_NP_0^{\perp}) u = P_0^{\perp}q.$$

In our experience, the first approach gives slightly better results. Note that this is a purely numerical guess; there is in general no theoretical guarantee or indication for this procedure to converge quickly as $N\to \infty$. For the main problems considered in this work, the usual compactness estimates do not hold and convergence is typically not of spectral nature~\cite{Trefethen2000SpectralMATLAB}. The argument presented below will only provide a tight enclosure of $\lambda$ if the approximate solution $\bar u$ is accurate enough, but we stress that we do not need any a priori estimate regarding the quality of $\bar u$. 

\begin{rmk}
    For very high-dimensional sparse systems, it will in general prove better (for instance in terms of memory requirements) to make use of iterative methods to solve truncated or least-square versions of this problem~\cite{Paige1982LSQR:Squares, Trefethen1997ChapterMethods}.
\end{rmk}

We now assume that an approximate solution $\bar{u}\in P_N(\mathcal{H})$ of~\eqref{eqn:approx-problem}  has been computed for some $N$. In practice, we do so via~\eqref{eq:leastsquare}, but we again emphasize that the way such a $\bar{u}$ is actually obtained is irrelevant for the estimates to come. In particular, none of the above computations need to have been performed rigorously. We are now ready give a fully justified and more detailed version of method presented in Section~\ref{sec:method}.

\begin{prop}\label{prop:method}
Assume that the SDE~\eqref{eq:SDE_method} having complete vector fields and posed on a $\sigma$-compact manifold $\mathcal{X}$ has a unique stationary measure $\mu$. Let $q: \mathcal{X}\to \mathbb{R}$ a $\mathcal{C}^2$ $\mu$-integrable function, and $\mathcal{I}$ be the integral~\eqref{eq:integral_method}. Assume~\ref{(S1)}--\ref{(S5)}, and let $\bar{u}\in P_N(\mathcal{H})$ for some $N\in\mathbb{N}$. Let
\begin{equation}\label{eqn:barI-def}
    \bar{\mathcal{I}} \overset{\mathrm{def}}{=} P_0 (q -\mathcal{L}\bar{u}),
\end{equation}
and 
\begin{equation}\label{eqn:barq-def}
    \bar{q} \overset{\mathrm{def}}{=} \mathcal{L} \bar{u}+\bar{\mathcal{I}} .
\end{equation} 
Then
\begin{equation}\label{eq:IminusbI}
    \mathcal{I} - \bar{\mathcal{I}}= \int_\mathcal{X}(q - \bar{q})\d \mu.
\end{equation}
Moreover, denoting by $\{\epsilon_m\}_{m = 1}^M$ the coefficients of $q - \bar{q}$ with respect to the basis $\{f_m\}_{m \geq 0}$, i.e.
\begin{equation} \label{eqn:rigorous-sum}
    q-\bar{q} = \sum_{m = 1}^M \epsilon_m f_m,
\end{equation}
we have the estimate
\begin{equation}\label{eq:computable_estimate}
    |\mathcal{I} - \bar{\mathcal{I}}| \leq \mu(W)\sum_{m = 1}^M |\epsilon_m|\left\|\frac{f_m}{W}\right\|_{\infty}.
\end{equation}
\end{prop}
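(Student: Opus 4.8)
The plan is to prove the three claims — the identity for $\bar{\mathcal{I}}$ being a good approximation, the error formula \eqref{eq:IminusbI}, and the computable bound \eqref{eq:computable_estimate} — essentially in the order they are stated, leaning on Lemma~\ref{lem:Bax} for the key vanishing-mean fact and on the structural assumptions \ref{(S1)}--\ref{(S5)} for everything else.

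\medskip

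First, I would verify that all the objects appearing are well-defined. By \ref{(S5)}, $q \in P_{n_0}(\mathcal{H})$, and since $\bar{u}\in P_N(\mathcal{H})$, assumption \ref{(S2)} guarantees $\mathcal{L}\bar{u}\in P_M(\mathcal{H})$ for some $M$ (which we may take $\geq n_0$), so $q - \mathcal{L}\bar{u}\in P_M(\mathcal{H})$ and the projection $P_0(q-\mathcal{L}\bar{u})$ in \eqref{eqn:barI-def} makes sense; because $f_0 = 1$, $\bar{\mathcal{I}}$ is indeed a scalar (the $f_0$-coefficient). Consequently $\bar{q}=\mathcal{L}\bar u + \bar{\mathcal{I}}\in P_M(\mathcal{H})$, and $q-\bar q = q - \mathcal{L}\bar u - \bar{\mathcal{I}} = P_0^\perp(q - \mathcal{L}\bar u)\in P_M(\mathcal{H})$ has zero $f_0$-component, which is exactly what licenses writing \eqref{eqn:rigorous-sum} as a sum starting at $m=1$. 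Next, for \eqref{eq:IminusbI}: write $\mathcal{I} - \bar{\mathcal{I}} = \int_{\mathcal X} q\,\d\mu - \bar{\mathcal{I}}$; since $\bar{\mathcal{I}}$ is a constant and $\mu$ is a probability measure, $\bar{\mathcal{I}} = \int_{\mathcal X}\bar{\mathcal{I}}\,\d\mu = \int_{\mathcal X}(\bar q - \mathcal{L}\bar u)\,\d\mu$. Now apply Lemma~\ref{lem:Bax} with $u = \bar u \in P_N(\mathcal{H})$ — whose hypotheses \ref{(S1)}--\ref{(S4)} hold — to get $\int_{\mathcal X}\mathcal{L}\bar u\,\d\mu = 0$, hence $\bar{\mathcal{I}} = \int_{\mathcal X}\bar q\,\d\mu$, and subtracting gives $\mathcal{I}-\bar{\mathcal{I}} = \int_{\mathcal X}(q-\bar q)\,\d\mu$. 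One should check $\mu$-integrability of each term: $q$ is $\mu$-integrable by hypothesis, and $\bar q \in P_M(\mathcal{H})$ is a finite linear combination of the $f_m$, each of which is $\mu$-integrable by \ref{(S3)} (since $|f_m|\leq \|f_m/W\|_\infty W$ and $\mu(W)<\infty$), so the manipulation is rigorous.

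\medskip

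Finally, for the computable estimate \eqref{eq:computable_estimate}: substitute \eqref{eqn:rigorous-sum} into \eqref{eq:IminusbI} and use linearity of the integral together with the triangle inequality,
\[
|\mathcal{I}-\bar{\mathcal{I}}| = \left|\int_{\mathcal X}\sum_{m=1}^M \epsilon_m f_m\,\d\mu\right| \leq \sum_{m=1}^M |\epsilon_m|\int_{\mathcal X}|f_m|\,\d\mu.
\]
Then bound $|f_m(x)| = \frac{|f_m(x)|}{W(x)}\,W(x) \leq \left\|\frac{f_m}{W}\right\|_\infty W(x)$ pointwise and integrate against $\mu$ to get $\int_{\mathcal X}|f_m|\,\d\mu \leq \left\|\frac{f_m}{W}\right\|_\infty \mu(W)$, which yields \eqref{eq:computable_estimate} after factoring out $\mu(W)$. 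Both $\mu(W)$ and the suprema $\|f_m/W\|_\infty$ are explicitly boundable by \ref{(S3)}, and the finitely many coefficients $\epsilon_m$ are computed (rigorously, via interval arithmetic) from $\bar u$, $\bar{\mathcal{I}}$ and the Galerkin matrix of $\mathcal{L}$, so the right-hand side is genuinely computable.

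\medskip

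I do not expect any serious obstacle here: the statement is essentially a clean bookkeeping argument once Lemma~\ref{lem:Bax} is in hand. The only point requiring a little care is making sure the integral manipulations are justified — i.e.\ that $q-\bar q$ is $\mu$-integrable so that splitting $\int(q-\bar q)\,\d\mu$ into the finite sum $\sum_m \epsilon_m\int f_m\,\d\mu$ is legitimate — but this is immediate from \ref{(S3)} as noted above. (Assumption \ref{(S4)}, and the $\tilde W$ of \ref{(S4)} when it differs from $W$, is not needed directly in this proof beyond what is already used inside Lemma~\ref{lem:Bax}.) So the ``hard part'' is really upstream — constructing the basis and the Lyapunov-type functions $W,\tilde W$ so that \ref{(S1)}--\ref{(S5)} hold, and numerically producing an accurate $\bar u$ so that the $\epsilon_m$ are small — rather than in the proof of Proposition~\ref{prop:method} itself.
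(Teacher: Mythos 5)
Your proof is correct and follows essentially the same route as the paper's: Lemma~\ref{lem:Bax} gives $\int_{\mathcal X}\mathcal{L}\bar u\,\d\mu=0$, the identity $q-\bar q=P_0^\perp(q-\mathcal L\bar u)$ justifies the sum starting at $m=1$, and the final bound is the same chain of inequalities (you apply the triangle inequality term by term where the paper first bounds $\|(q-\bar q)/W\|_\infty$, but these are equivalent). Your added remarks on $\mu$-integrability of $\bar q$ via \ref{(S3)} are a correct, if minor, tightening of details the paper leaves implicit.
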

\begin{proof}
    By Lemma~\ref{lem:Bax} we have $\int_{\mathcal{X}}(\mathcal{L}\bar{u})\d\mu = 0$, and integrating~\eqref{eqn:barq-def} immediately yields~\eqref{eq:IminusbI}. Then, thanks to assumptions~\ref{(S2)} and \ref{(S5)}, $q-\bar{q}$ belongs to $P_M(\mathcal{H})$ for some $M\in\mathbb{N}$, hence $q-\bar{q}$ can be written as $\sum_{m = 0}^M \epsilon_m f_m$. We note that, as claimed in~\eqref{eqn:rigorous-sum}, the first coefficient $\epsilon_0$ is in fact $0$, because~\eqref{eqn:barI-def} and~\eqref{eqn:barq-def} imply $q-\bar{q} = P_0^{\perp} (q -\mathcal{L}\bar{u})$. We then simply estimate
    \begin{equation*}
        |\mathcal{I} - \bar{\mathcal{I}}| = \left|\int_\mathcal{X}(q - \bar{q})\d \mu\right|\leq \mu(W)\left\|\frac{(q-\bar{q})}{W}\right\|_{\infty} \leq \mu(W)\sum_{m = 1}^M |\epsilon_m|\left\|\frac{f_m}{W}\right\|_{\infty}.\qedhere
    \end{equation*}
\end{proof}
Estimate~\eqref{eq:computable_estimate} gives us an enclosure of $\mathcal{I}$, which is expected to be tight if $\bar{u}$ is an accurate approximation of the solution $u$ of the Poisson equation $\mathcal{L}u = q-\mathcal{I}$. Moreover, the right hand side of~\eqref{eq:computable_estimate} can be explicitly and rigorously bounded. This only requires 
\begin{itemize}
    \item Rigorously applying $\mathcal{L}$ to the approximate solution $\bar{u}$ in order to obtain $\mathcal{\bar{I}}$ and $\bar{q}$ in~\eqref{eqn:barI-def}-\eqref{eqn:barq-def}, which then yields the coefficients $\epsilon_m$,
    \item Getting computable upper-bounds for $\mu(W)$ and for $\left\|\frac{f_m}{W}\right\|_{\infty}$, for finitely many $m$'s.
\end{itemize} 
For instance, if we use Fourier series and $W=1$ (as will be the case in some examples), we simply have $\left\|\frac{f_m}{W}\right\|_{\infty}=1$ for all $m$. When using more involved bases, we get explicit bounds on $\left\|\frac{f_m}{W}\right\|_{\infty}$ using interval arithmetic together with some elementary estimates. In this work, we use the Julia package \texttt{IntervalArithmetic.jl}~\cite{david_p_sanders_2024_10459547} to perform our rigorous calculations, and the required estimates are discussed case by case in the example sections.

\begin{rmk}\label{rmk:mat-vec}
    The evaluation of $\bar{q}$~\eqref{eqn:barq-def} which is the most expensive calculation in interval arithmetic is only a matrix-vector multiplication. This is in contrast with most computer-assisted proofs for ODEs or PDEs, which usually at least require a matrix-matrix multiplication, the inversion of a matrix, the resolution of a linear system or many matrix-vector multiplications in rigorous arithmetic. That allows for the treatment of problems with a much higher truncation dimension $N$ which are especially necessary for non-elliptic problems on an unbounded multi-dimensional domain $\mathcal{X}$. Furthermore (at least for the rigorous part of the proof), provided $\mathcal{L}$ has a sparse representation with respect to $\{f_n\}_{n\in \mathbb{N}}$, this approach is not so limited by memory resources (which typically restrict computer-assisted proofs), as we can take full advantage of the sparsity of $\mathcal{L}$.
\end{rmk}

\begin{rmk}
    The main reason for the need for a weight function $W$, is that for our class of problems which are analytic, the Hilbert basis $\{f_n\}_{n\in \mathbb{N}}$ and $q$ are typically of polynomial type and thus unbounded on an unbounded domain. In this case, $W(x)$ needs to grow exponentially or superexponentially as $|x|\to \infty$. Note that the existence of such weight or Lyapunov function is a standard ingredient for the proof of ergodicity of $(x_t)_{t\geq 0}$. When choosing $W$, there is of course a trade-off between the values of $\|f_m/W\|_{\infty}$ and $\mu(W)$.
\end{rmk}

\begin{rmk}
    Some of the assumptions of our setting can be loosened. For instance, instead of~\ref{(S5)}, it is sufficient for $q$ to be analytic and thus have geometrically decaying coefficients with respect to our Hilbert basis, but this would require an additional bound for the tail of this series. We could also choose to express the problem with respect to two different bases for arguments and images of $\mathcal{L}$. Moreover, working with a Hilbert basis for the domain of $\mathcal{L}$ is not strictly necessary, as will be showcased in Section~\ref{sec:adj_Duffing}. 
    More generally, we stress that the setting presented in this section is only one approach among others that one could take for the rigorous enclosure of ergodic averages via the adjoint method. The present approach is most likely the simplest one but may not be suitable in every situation (e.g.~if $w$ is not exponentially decaying on an unbounded domain $\mathcal{M}$).
\end{rmk}


\subsection{Example: The average Lyapunov exponent for the cellular flow with sinks}\label{subsec:volume}

Consider the cellular flow with sinks $(\varphi_t)_{t\geq 0}$ on the two-torus $\mathcal{M} = \mathbb{T}^2$ generated by the differential equation
$$\begin{cases}
    \d x_t &= (\cos(x_t)/2-\cos y_t)\sin x_t \d t+\sigma\d B^1_t\\
    \d y_t &= (\cos(y_t)/2+\cos x_t)\sin y_t \d t+\sigma\d B^2_t
\end{cases}$$
where $\mathbb{T}^2 = (\R / \mathbb{Z})^2 \simeq [0,2\pi)^2$ is endowed with the Riemannian structure inherited from $\mathbb{R}^2$ (throughout this paper, the Riemannian structure on $\mathbb{T} = \R / \mathbb{Z}$ will always be inherited from $\mathbb{R}$). Since the noise is additive and the state space $\mathcal{M} = \mathbb{T}^2$ is compact, the Markov process $(\varphi_t)_{t\geq 0}$ is clearly ergodic with unique stationary measure $\mu$ on $\mathbb{T}^2$. This system is a modification of the classical Hamiltonian cellular flow~\cite{Brue2024EnhancedFlows}, and thus the incompressibility of the flow is lost. For this variant, one actually expects volume contraction along trajectories (see Figure~\ref{fig:cellular-a}). This volume growth rate is given by the so-called \emph{average Lyapunov exponent}
$$\lambda_{\Sigma}(\omega, x) = \lim_{t\to\infty}\frac{1}{2t}\log\det{D\varphi_t(\omega, x)}.$$
Before turning our attention to the actual Lyapunov exponent $\lambda$ for this system in Section~\ref{sec:cellular}, we first study here the simpler case of the average Lyapunov exponent, which provides an easy example to illustrate our method. By Liouville's formula~\cite[Theorem 2.3.32]{Arnold1998RandomSystems},
$$\det(D\varphi_t(\omega, x)) = \exp\int_0^t\tr D(X_0(\varphi_\tau))\d\tau = \exp\int_0^t\frac{\cos 2x_\tau +\cos 2y_\tau}{2}\d\tau.$$
Then, by Birkhoff's ergodic theorem, $\lambda_{\Sigma}$ is constant $(\mathbb{P}\times \mu)$-almost surely and
$$\lambda_{\Sigma} = \lim_{t\to\infty}\frac{1}{2t}\int_0^t\frac{\cos 2x_\tau +\cos 2y_\tau}{2}\d\tau = \int_{\mathbb{T}^2}\frac{\cos 2x+\cos 2y}{4}\mu(\d x, \d y).$$
We thus aim to find numerical solutions $\bar{u}$ and $\bar{\lambda}_{\Sigma}$ to the Poisson problem
\begin{equation}
    \mathcal{L}u = \left(\frac{\cos x}{2}-\cos y\right)\partial_xu+\left(\frac{\cos y}{2}+\cos x\right)\partial_yu+\frac{\sigma^2}{2}\left(\partial_x^2+\partial_y^2\right)u=\frac{\cos 2x+\cos 2y}{4}-\lambda_{\Sigma} = q(x,y) -\lambda_{\Sigma}\label{eqn:volume_poisson}.
\end{equation}

We choose the basis $\{f_i\}_{i\in \mathbb{N}} = \{\cos mx, \sin mx\}_{m \in \N}\otimes \{\cos ny, \sin ny\}_{n \in \N}$, fix $\sigma = \sqrt{2}$ (with $W = \tilde{W} = 1$), and in the file~\texttt{cellular/proof\_volume} at~\cite{Huggzz/Enclosure-of-Lyapunov-exponents} we construct
$$\bar{u}(x,y) = \sum_{m = 0 }^{50}\sum_{n = 0}^{50} a_{mn}\cos mx\cos ny
+b_{mn}\cos mx\sin ny +c_{mn} \sin mx \cos ny +d_{mn} \sin mx \sin n y,$$
represented in Figure~\ref{fig:volume_ubar}.

\begin{figure}[h]
    \centering
    \includegraphics[width=0.5\textwidth]{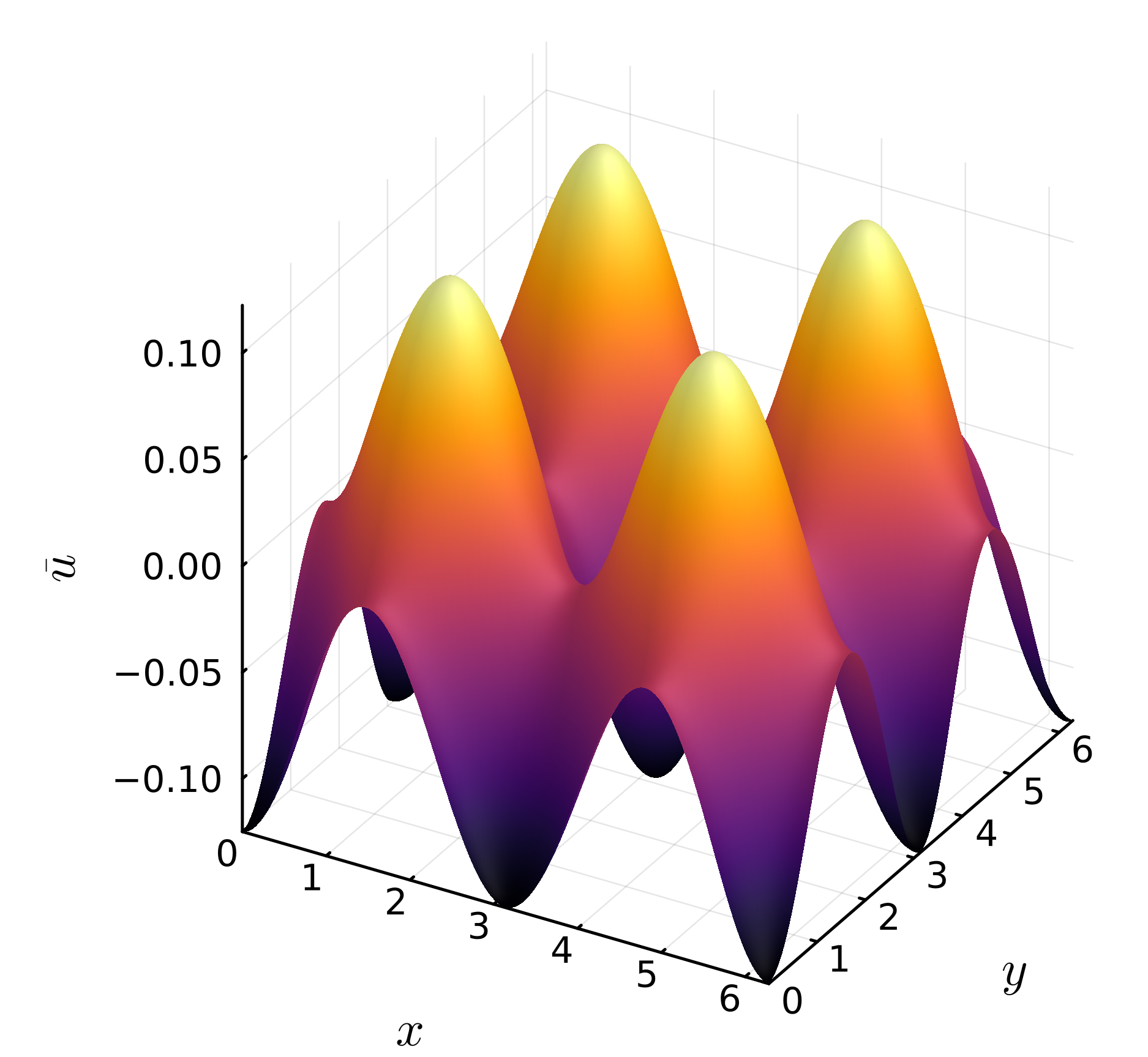}
    \caption{Plot of the numerical solution $\bar{u}$ to Eq.~\eqref{eqn:volume_poisson} with $\sigma = \sqrt{2}$.\label{fig:volume_ubar}}
\end{figure}

\begin{rmk}
    Of course, one should normally try to reduce the dimension of the numerical problem by exploiting the symmetries of the system. For instance, in this case, one can observe in Figure~\ref{fig:volume_ubar} that
    $$\bar{u}(x,y) = \sum_{m =1}^{25}\sum_{n = 1}^{25} a_{2m, 2n} \cos{2mx}\cos{2ny}.$$
    This is of importance when solving large linear systems.
\end{rmk}

We then choose

$$\bar{\lambda}_{\Sigma} : =  \frac{1}{(2\pi)^2}\int_{\mathbb{T}^2} (q -\mathcal{L}\bar{u})\, \d x \d y=-\frac{1}{(2\pi)^2}\int_{\mathbb{T}^2} (\mathcal{L}\bar{u})\, \d x \d y,$$
i.e., as in Eq.~\eqref{eqn:barq-def}, $\bar{\lambda}_{\Sigma}$ is the projection of $q - \mathcal{L}\bar{u}$ on constants and
$$\bar{q} := \mathcal{L}\bar{u} +\bar{\lambda}_{\Sigma}.$$
Since

$$\bar{\lambda}_{\Sigma} = \int_{\mathbb{T}^2} \bar{q}\d \mu,$$
we thus have that

$$|\bar{\lambda}_{\Sigma} - \lambda_{\Sigma}| = \left|\int_{\mathbb{T}^2}(q-\bar{q})\d \mu\right|\leq \|q - \bar{q}\|_{\infty}.$$
Thus, by bounding the right-hand side, we find that
\begin{equation}\label{eqn:volume-lambda-cellular}
    \lambda_{\Sigma} = -0.0308582892201142 \pm 5 \times 10^{-16}< 0,
\end{equation}
confirming that the stochastic flow $(\varphi_t)_{t\geq 0}$ is volume contracting. Note that $q$ is a trigonometric polynomial, and that we constructed $\bar u$ as a trigonometric polynomial. Therefore, $\bar q = \mathcal{L}\bar{u} +\bar{\lambda}_{\Sigma}$ is also a trigonometric polynomial, and explicitly bounding $\|q - \bar{q}\|_{\infty}$ is straightforward (for instance using the $\ell^1$-norm of the Fourier coefficients).

Of course, this particular problem could instead also be treated with more traditional computer-assisted methods by computing the stationary density $w$ via the stationary Fokker--Planck equation $\mathcal{L}^*w = 0$ which is a uniformly elliptic equation on the torus $\mathbb{T}^2$~\cite{Arioli2005TwoModel,Hungria2015RigorousApproach,Nakao2019NumericalEquations}. However, as already mentioned, our method requires significantly fewer computations than what would be needed for rigorously solving the stationary Fokker--Planck equation. But most importantly, our method does not make any assumption on the ellipticity of the operator $\mathcal{L}$ or on the compactness of the domain and can thus treat a much wider class of examples. This is for instance the case of the main problem of this paper: the computations of Lyapunov exponents.

\section{Lyapunov exponents via the projective process}
\label{sec:generalitiesLyap}

In this section, we first recall how the Furstenberg--Khasminskii formula can be derived, and then give sufficient conditions for proving the ergodicity of the projective process.

\subsection{The Furstenberg--Khasminskii formula}
\label{sec:FK}


For the sake of presentation, let us consider first the additive noise case, i.e.~$X_i  = \sigma_i e_i$ for all $i \in\{1, \ldots, d\}$ for some $\sigma_i \geq 0$ where $\mathcal{M} = \mathbb{T}^{d_1}\times\R^{d_2}$ (with $d = d_1+d_2$). We then have that

$$\d \varphi_t = X_0(\varphi_t) \d t +\sum_{i=1}^{d}\sigma_i e_i \d B^i_t.$$
Then the Furstenberg--Khasminskii formula can be derived as follows.
%
%
For $v \in \mathbf{P}_x \mathcal{M}$, we define
\begin{equation}\label{eqn:rho-s-def}
\varrho_t(\omega, x, v) = \|D\varphi_t(\omega, x) v\|, \qquad s_t(\omega, x, v) = \frac{D\varphi_t(\omega, x)v}{\|D\varphi_t(\omega, x)v\|} \in T_{\varphi_t(\omega, x)}\mathcal{M}.
\end{equation}
Then $(s_t)_{t\geq 0}$ solves the random differential equation (a nonautonomous ordinary differential equation)

\begin{equation}\label{eqn:s-eq-additive}
    \diff{s_t}{t} = DX_0(\varphi_t)s_t - Q(\varphi_t, s_t)s_t,
\end{equation}
where $Q(x,s) = \langle DX_0(x)s, s\rangle$ and similarly

$$\diff{\log\varrho_t}{t} = Q(\varphi_t, s_t)\quad \text{and} \quad \varrho_t = \exp\int_0^t Q(\varphi_\tau, s_\tau)\d\tau.$$
Therefore, 
if $(\xi_t)_{t\geq 0} = (\varphi_t, s_t)_{t\geq 0}$ has a unique invariant probability measure $\tilde{\mu}$ and $Q\in L^1(\tilde{\mu})$, by Birkhoff's ergodic theorem, we obtain the Furstenberg--Khasminskii formula

$$\lambda = \lim_{t\to\infty}\frac{1}{t}\log \varrho_t = \lim_{t\to\infty}\frac{1}{t}\int_0^t Q(\xi_{\tau})\d \tau = \int_{\mathbf{P}\mathcal{M}} Q \d \tilde{\mu}.$$

\noindent In the multiplicative noise case,
\begin{equation*}
    \d \varphi_t = X_0(\varphi_t) \d t + \sum_{i=1}^\ell X_i(\varphi_t) \circ \d B^i_t, \qquad \varphi_0 = x,
\end{equation*}
the evolution equation of $(\xi_t)_{t\geq0}$ has the form

\begin{equation*}
    \d \xi_t = \tilde{X}_0(\xi_t) \d t + \sum_{i=1}^\ell\tilde{X}_i(\xi_t) \circ \d B^i_t \qquad \xi_0 = (x, v),
\end{equation*}
where the vector fields $\tilde{X}_0,\tilde{X}_1, \ldots, \tilde{X}_\ell$ on $\mathbf{P}\mathcal{M}$ are given by~\cite{Carverhill1985ATheorem}, which also gives a formula for the function $Q$ in the multiplicative noise case,
such that

$$\d \log \varrho_t = Q(\varphi_t, s_t)\d t + \sum_{i = 1}^\ell \langle D X_i(\varphi_t)s_t, s_t\rangle \d B^i_t;$$
where the above equation is now written in It\^{o} form. Provided

\begin{equation}\label{eqn:annoying-part}
\frac{1}{t}\int_0^t \langle D X_i(\varphi_{\tau})s_{\tau}, s_{\tau}\rangle \d B^i_{\tau} \xlongrightarrow{t\to\infty} 0, \qquad i = 1, \ldots,\ell,
\end{equation}
almost surely or in expectation with respect to $\mathbb{P}\times \tilde{\mu}$, then as before, by Birkhoff's ergodic theorem

$$\lambda = \int_{\mathbf{P}\mathcal{M}} Q \d \tilde{\mu}.$$
The limit~\eqref{eqn:annoying-part} holds in expectation for instance if

\begin{equation}\label{eqn:mult-noise-int}
\int_{\mathcal{M}} \|DX_i\|^2 \d \mu < \infty, \qquad i = 1, \ldots,\ell,
\end{equation}
which is a standard integrability condition for the multiplicative ergodic theorem~\cite[Theorem 4.2.13]{Arnold1998RandomSystems}. Indeed, under such integrability condition, a simple application of the Cauchy--Schwarz inequality and the It\^{o} isometry yields that

\begin{align*}
\frac{1}{t}\int_{\mathbf{P}\mathcal{M}}\mathbb{E}\left[\left|\int_0^t \langle D X_i(\varphi_{\tau})s_{\tau}, s_{\tau}\rangle \d B^i_{\tau}\right|\right]\d\tilde{\mu}&\leq\frac{1}{\sqrt{t}}\left(\int_{\mathcal{M}}\left\|D X_i\right\|^2 \d\mu\right)^{1/2}\xrightarrow{t\to\infty} 0.
\end{align*}
%
%
This implies that the limit~\eqref{eqn:annoying-part} holds in expectation.

\begin{rmk}
   Lower Lyapunov exponents (see~\cite[Chapter 3]{Arnold1986LyapunovSystems} for a definition) can be obtained similarly~\cite{Baxendale1986TheDiffeomorphisms} by generalising the projective process to a Grassmannian process. There is in principle no obstruction for our method to be applied to compute these additional Lyapunov exponents but, in this paper, we mostly restrict ourselves to the top Lyapunov exponent for the sake of clarity. In particular, the bottom Lyapunov exponents can be obtained with a system similar and of the same dimension as the one studied in this work for the (top) Lyapunov exponent.\label{rmk:grassmann} 
\end{rmk}

\subsection{On the ergodicity of the projective process}\label{sec:proj-ergodicity}

Let us now address the question of the existence and uniqueness of a measure $\tilde{\mu}$ for the projective process $(\xi_t)_{t\geq 0}$. To this end, let us recall that the Lie bracket $[U,V]$ between two vector fields $U$ and $V$ is the vector field given by

$$[U,V](x) = DV(x)U(x) -DU(x) V(x).$$
We can now state the following nondegeneracy conditions.

\begin{defn}
    Consider a stochastic differential equation written in Stratonovich form
    \begin{equation}
        \d x_t = V_0(x_t) \d t + \sum_{i=1}^\ell V_i(x_t) \circ \d B^i_t \label{eqn:horm-sde}
    \end{equation}
    with vector fields $V_0, V_1,\ldots, V_\ell$ on a manifold $\mathcal{X}$. Then \eqref{eqn:horm-sde} is said to satisfy
    \begin{enumerate}[label=\textnormal{(\arabic*)}]
        \myitem[(WH)]\label{(WH)} the \emph{weak} (or elliptic) H\"{o}rmander condition if $L(x)$ spans $T_x\mathcal{X}$ for all $x \in \mathcal{X}$ where $L = LA(V_0, V_1, \ldots, V_\ell)$ denotes the Lie algebra generated by $V_0, V_1, \ldots, V_\ell$.
        \myitem[(PH)]\label{(PH)} the \emph{parabolic} H\"{o}rmander condition if $L_0(x)$ spans $T_x\mathcal{X}$ for all $x \in \mathcal{X}$ where $L_0$ denotes the ideal generated by $V_1, \ldots, V_\ell$ in $L$.
        \myitem[(SH)]\label{(SH)} the \emph{strong} H\"{o}rmander if $LA(V_1, \ldots, V_\ell)$ spans $T_x\mathcal{X}$ for all $x\in \mathcal{X}.$
    \end{enumerate}
\end{defn}

These conditions are typically checked by iterating the Lie Brackets
$$\left[\,\left[\ldots \left[V_{i_1}, V_{i_2}\right], \ldots\right], V_{i_n}\right](x)$$
until they span $T_x\mathcal{X}$ for all $x\in\mathcal{X}$, where the allowed indices $i_1, \ldots, i_n$ depend on the condition to be checked. These conditions may be verified by hand e.g.~in Section~\ref{sec:pendulum} or by computer assistance e.g.~in Section~\ref{sec:cellular}.

Note that clearly $\ref{(SH)} \implies \ref{(PH)}\implies \ref{(WH)}$, however only in some rare cases, does~\ref{(WH)} hold without \ref{(PH)}(see~\cite{Ichihara1974ACharacterization} and its supplements and corrections). While the parabolic H\"{o}rmander~\ref{(PH)} condition implies that the Markov process $(x_t)_{t\geq 0}$ has smooth transition probabilities and is strong Feller~\cite[Theorem 1.3]{Hairer2011OnTheorem}, the weak H\"{o}rmander condition~\ref{(WH)} implies the smoothness of stationary densities.

If a diffusion satisfies the strong H\"{o}rmander condition~\ref{(SH)}, it is usually more straightforward to prove the existence and uniqueness of a stationary density~\cite{Arnold1987OnDiffusions}. 
If a diffusion does not satisfy the strong H\"{o}rmander condition~\ref{(SH)}, it is said to be degenerate. If it only satisfies the parabolic H\"ormander condition~\ref{(PH)} (i.e.~is strong Feller) coupled with a recurrence/Lyapunov condition, then in general, one can only conclude the existence of a stationary measure~\cite[Section 6]{Canizo2023Harris-typeSemigroups}. In order to obtain uniqueness, one typically has to prove the controllability or topological irreducibility of the Markov process $(x_t)_{t\geq 0}$ via the Stroock--Varadhan theorem~\cite{Hairer2008ErgodicPDEs, Stroock1972OnPrinciple}. This last step is not needed when the diffusion satisfies the strong H\"ormander condition~\ref{(SH)}, as the controllability is automatic in that case. 

Unfortunately, in the case of the projective process $(\xi_t)_{t\geq 0} = (\varphi_t, s_t)_{t\geq 0}$, the strong H\"ormander condition~\ref{(SH)} is typically not satisfied. This is for instance clear in the additive noise case (see Eq.~\eqref{eqn:s-eq-additive}). One then normally has to check for the controllability of $(\xi_t)_{t\geq 0}$, in addition of the parabolic H\"ormander condition~\ref{(PH)} for $(\xi_t)_{t\geq 0}$. While controllability is not so hard to check on a case-by-case basis, it can sometimes be fiddly and cumbersome. Fortunately,
San Martin and Arnold~\cite{SanMartin1986AFlow} give weak and general conditions for the controllability and hence uniqueness of a stationary measure for $(\xi_t)_{t \geq 0}$ on $\mathbf{P}(\mathcal{M})$ to hold; we state their result below.

First recall that the evolution equations of $(\varphi_t)_{t\geq 0}$ and $(\xi_t)_{t\geq 0}$ are respectively given by

\begin{equation}
    \d \varphi_t = X_0(\varphi_t) \d t + \sum_{i=1}^\ell X_i(\varphi_t) \circ \d B^i_t(\omega) \label{eqn:gen-sde-2}, \qquad \varphi_0 = x,
\end{equation}
and

\begin{equation}
    \d \xi_t = \tilde{X}_0(\xi_t) \d t + \sum_{i=1}^\ell \tilde{X}_i(\xi_t) \circ \d B^i_t \label{eqn:proj-sde-2}, \qquad \xi_0 = (x, v),
\end{equation}
where $X_0, X_1, \ldots, X_\ell$  (and thus $\tilde{X}_0, \tilde{X}_1, \ldots, \tilde{X}_\ell$) are \emph{analytic} and complete vector fields. Then~\cite{SanMartin1986AFlow} asserts that if $(\varphi_t)_{t\geq 0}$ possesses a unique ergodic probability measure on $\mathcal{M}$, then it is sufficient for $(\xi_t)_{t\geq 0}$ to merely fulfil the weak H\"{o}rmander condition~\ref{(WH)} for it to possess a unique ergodic probability measure on $\mathbf{P}\mathcal{M}$.

\begin{prop}\label{prop-san-martin}
    Recall that $X_0, X_1, \ldots, X_\ell$ are \emph{analytic} and complete vector fields on $\mathcal{M}$. Assume that the Markov process $(\varphi_t)_{t\geq 0}$ possesses a unique ergodic probability measure $\mu(\d x) = w(x) \d x$ on $\mathcal{M}$ and that $(\varphi_t)_{t\geq 0}$ is controllable on $C = \mathrm{supp}\mu \subset \mathcal{M}$. Then if the evolution equation~\eqref{eqn:proj-sde-2} of $(\xi_t)_{t\geq 0}$ satisfies the weak H\"{o}rmander condition~\ref{(WH)}, then there exists a unique ergodic probability measure $\tilde{\mu}$ for $(\xi_t)_{t\geq 0}$ on $\mathbf{P}\mathcal{M}$. Furthermore, denoting $\tilde{\mu}(\d\xi) = \tilde{w}(\xi)\d\xi$, then $\tilde{w}$ is smooth, $\tilde{\mathcal{L}}^*\tilde{w} = 0$ and for all $x\in \mathcal{M}$
    $$w(x) = \int_{\mathbf{P}(T_x\mathcal{M})}\tilde{w}(x,s)\d s$$
\end{prop}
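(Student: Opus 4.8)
The plan is to invoke the result of San Martin and Arnold \cite{SanMartin1986AFlow} as a black box for the existence and uniqueness of the ergodic measure $\tilde\mu$, and then to separately establish the three附加 conclusions: smoothness of $\tilde w$, the stationary Fokker--Planck equation $\tilde{\mathcal L}^*\tilde w = 0$, and the marginal identity $w(x) = \int_{\mathbf{P}(T_x\mathcal M)} \tilde w(x,s)\,\d s$. First I would check that the hypotheses of \cite{SanMartin1986AFlow} are met: the vector fields $\tilde X_0,\dots,\tilde X_\ell$ on $\mathbf P\mathcal M$ are analytic and complete (they are the natural lifts of the analytic complete fields $X_i$, and completeness of the lift follows since the projective fibre is compact), $(\varphi_t)_{t\ge 0}$ has a unique ergodic measure $\mu$ by assumption, $(\varphi_t)_{t\ge 0}$ is controllable on $C = \mathrm{supp}\,\mu$ by assumption, and $(\xi_t)_{t\ge 0}$ satisfies the weak H\"ormander condition \ref{(WH)}. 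Under these, \cite{SanMartin1986AFlow} yields a unique ergodic $\tilde\mu$ on $\mathbf P\mathcal M$ and, in particular, the controllability / topological irreducibility of $(\xi_t)_{t\ge 0}$ on the appropriate support set.

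For the smoothness of $\tilde w$: the weak H\"ormander condition \ref{(WH)} for $(\xi_t)_{t\ge 0}$ implies, by the discussion preceding the proposition (H\"ormander's theorem for the backward/forward operators, cf.\ the remark that \ref{(WH)} implies smoothness of stationary densities), that any stationary measure of $(\xi_t)_{t\ge 0}$ is absolutely continuous with a smooth density. Hence $\tilde\mu(\d\xi) = \tilde w(\xi)\,\d\xi$ with $\tilde w \in \mathcal C^\infty(\mathbf P\mathcal M)$. Once $\tilde w$ is known to be a smooth density of a stationary measure, the identity $\tilde{\mathcal L}^*\tilde w = 0$ is just the definition of stationarity: for every $\phi \in \mathcal C_c^\infty(\mathbf P\mathcal M)$ one has $\int \tilde{\mathcal L}\phi\, \tilde w \,\d\xi = 0$ (the generator annihilates test functions in the mean against an invariant measure), and integrating by parts using the smoothness of $\tilde w$ moves the operator onto $\tilde w$, giving $\int \phi\, \tilde{\mathcal L}^*\tilde w\,\d\xi = 0$ for all $\phi$, whence $\tilde{\mathcal L}^*\tilde w \equiv 0$ pointwise by smoothness.

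The marginal identity is the one genuinely structural point. The key observation is that the projection $\pi: \mathbf P\mathcal M \to \mathcal M$, $(x,s)\mapsto x$, intertwines the projective process with the base process: $\pi(\xi_t) = \varphi_t$, equivalently $\pi_* \tilde{\mathcal L} = \mathcal L \pi_*$ on functions pulled back from $\mathcal M$ (this is visible from \eqref{eqn:s-eq-additive} and the general form of $\tilde X_i$, since the $\varphi$-components of $\tilde X_i$ are exactly $X_i$). Therefore $\pi_*\tilde\mu$ is a stationary measure for $(\varphi_t)_{t\ge 0}$; by the assumed uniqueness, $\pi_*\tilde\mu = \mu$. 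Writing this out with densities, $\pi_*\tilde\mu$ has density $x\mapsto \int_{\mathbf P(T_x\mathcal M)}\tilde w(x,s)\,\d s$ with respect to the volume measure on $\mathcal M$ (disintegrating the volume measure on $\mathbf P\mathcal M$ over the fibration $\pi$), and $\mu$ has density $w$, so the two densities agree, which is exactly the claimed formula. The main obstacle, and the step requiring the most care, is precisely this disintegration bookkeeping: one must check that the Riemannian volume measure on $\mathbf P\mathcal M$ disintegrates as (volume on $\mathcal M$) $\otimes$ (volume on the projective fibre) in the right normalization so that pushing forward $\tilde w(x,s)\,\d x\,\d s$ under $\pi$ really produces $\bigl(\int \tilde w(x,s)\,\d s\bigr)\d x$ without extra Jacobian factors; this uses that $\mathbf P\mathcal M$ is given the Riemannian structure making $\pi$ a Riemannian submersion with totally geodesic fibres, so that Fubini applies cleanly.
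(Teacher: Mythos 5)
Your proposal is correct and follows essentially the same route as the paper: the paper likewise delegates the core existence/uniqueness statement to San Martin--Arnold (specifically, it combines their Theorem~7 and Corollary~2 to get a unique invariant control set over $\mathbf{P}C$, then invokes the Stroock--Varadhan support theorem and the fact that the weak H\"ormander condition allows at most one invariant measure per control set). Your additional derivations of $\tilde{\mathcal{L}}^*\tilde w = 0$ and of the marginal identity via $\pi_*\tilde\mu = \mu$ and uniqueness of $\mu$ are sound and in fact spell out details the paper's proof leaves implicit.
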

\begin{proof}
    Under these assumptions, we can combine Theorem 7 and Corollary 2 of~\cite{SanMartin1986AFlow} which give the uniqueness of an invariant control set $\tilde{C}$ for $(\xi_t)_{t\geq 0} = (\varphi_t, s_t)_{t\geq 0}$ on $\mathbf{P}C \subset \mathbf{P}\mathcal{M}$. Furthermore, there exists a unique smooth ergodic measure $\d\tilde{\mu} = \tilde{w}\d \xi$ for $(\xi_t)_{t\geq 0}$ on $\mathbf{P}C$ and thus on $\mathbf{P}\mathcal{M}$. This essentially follows from the Stroock--Varadhan theorem (e.g.~see~\cite{Hairer2008ErgodicPDEs}) which asserts that invariant distributions are supported on such control sets. Note that under the weak H\"{o}rmander condition~\ref{(WH)}, there can only exist one invariant measure on such control set~\cite{Arnold1987OnDiffusions}.
\end{proof}

\begin{cor}\label{cor:ergodicity}
    Assume that the Markov process $(\varphi_t)_{t\geq 0}$ possesses an invariant distribution $\mu$ on $\mathcal{M}$. Assume further that the evolution equation~\eqref{eqn:gen-sde-2} of $(\varphi_t)_{t\geq 0}$ satisfies the strong  H\"{o}rmander condition~\ref{(SH)} and the evolution equation~\eqref{eqn:proj-sde-2} of $(\xi_t)_{t\geq 0}$ satisfies the weak H\"{o}rmander condition~\ref{(WH)}. Then $\mu$ is the unique ergodic measure for $(\varphi_t)_{t\geq 0}$ and the conclusions of Proposition~\ref{prop-san-martin} hold.
\end{cor}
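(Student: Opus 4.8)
The plan is to verify the three hypotheses of Proposition~\ref{prop-san-martin} and then invoke it. Those hypotheses are: (i) $(\varphi_t)_{t\geq 0}$ has a \emph{unique} ergodic probability measure $\mu = w\,\d x$ on $\mathcal{M}$; (ii) $(\varphi_t)_{t\geq 0}$ is controllable on $C = \mathrm{supp}\,\mu$; and (iii) the projective process $(\xi_t)_{t\geq 0}$ satisfies the weak H\"ormander condition~\ref{(WH)}. Condition (iii) is assumed in Corollary~\ref{cor:ergodicity}, and existence (though not yet uniqueness) of $\mu$ is also assumed, so the content of the proof is to deduce (i) and (ii) from the strong H\"ormander condition~\ref{(SH)} for $(\varphi_t)_{t\geq 0}$.

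First I would note that $\ref{(SH)} \implies \ref{(PH)}$, so by~\cite[Theorem 1.3]{Hairer2011OnTheorem} the Markov semigroup of $(\varphi_t)_{t\geq 0}$ is strong Feller and has smooth transition densities $p_t(x,y)$. Next, using the Stroock--Varadhan support theorem~\cite{Stroock1972OnPrinciple}, the topological support of $P_t(x,\cdot)$ equals the closure of the set of points reachable at time $t$ by the associated control system $\dot y = V_0(y) + \sum_{i=1}^\ell u_i(t)\,V_i(y)$. Since $LA(V_1,\dots,V_\ell)$ spans $T_y\mathcal{M}$ at every $y$, a time-rescaling argument---replacing the controls by arbitrarily large ones renders the drift $V_0$ asymptotically negligible, reducing matters to the driftless, symmetric (hence controllable by Chow--Rashevskii) system---shows that, on the connected manifold $\mathcal{M}$, any point is reachable from any point. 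Consequently $(\varphi_t)_{t\geq 0}$ is controllable on all of $\mathcal{M}$, and $p_t(x,y)>0$ for all $x,y$; integrating against $\mu$ yields $w>0$ everywhere, so $\mathrm{supp}\,\mu = \mathcal{M} = C$ and (ii) holds.

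For (i), the semigroup is strong Feller and topologically irreducible, hence by Doob's theorem (equivalently Khasminskii's theorem; see e.g.~\cite{Hairer2011OnTheorem}, and~\cite{Arnold1987OnDiffusions} for the degenerate setting) there is at most one invariant probability measure. Since $\mu$ exists by assumption, it is the unique ergodic measure, which is (i). All three hypotheses of Proposition~\ref{prop-san-martin} are then met, and its conclusions transfer directly: $(\xi_t)_{t\geq 0}$ admits a unique ergodic probability measure $\tilde{\mu} = \tilde{w}\,\d\xi$ on $\mathbf{P}\mathcal{M}$, $\tilde{w}$ is smooth with $\tilde{\mathcal{L}}^*\tilde{w} = 0$, and $w(x) = \int_{\mathbf{P}(T_x\mathcal{M})}\tilde{w}(x,s)\,\d s$ for every $x\in\mathcal{M}$.

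The step I expect to be the main obstacle is the passage from~\ref{(SH)} to controllability of $(\varphi_t)_{t\geq 0}$ on $\mathcal{M}$: one must argue that the drift $V_0$ creates no obstruction when the bracket-generating family is $\{V_1,\dots,V_\ell\}$ alone. The standard device is the noise-rescaling / large-control limit inside the support theorem, which collapses the problem onto the driftless bracket-generating system; carrying this out cleanly, and confirming that $\mathrm{supp}\,\mu$ is the whole of $\mathcal{M}$ (in particular when $\mathcal{M}$ is non-compact), is the only part that is not a direct citation. Everything else is bookkeeping of facts already recalled in Section~\ref{sec:proj-ergodicity}.
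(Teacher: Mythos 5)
Your proposal is correct and takes essentially the same route as the paper, which states this corollary without a separate proof and relies on the preceding discussion that under~\ref{(SH)} controllability is automatic and, combined with the strong Feller property, forces uniqueness of the invariant measure, after which Proposition~\ref{prop-san-martin} applies. Your write-up simply makes that standard chain of arguments (Chow--Rashevskii plus the support theorem for irreducibility, then Doob--Khasminskii for uniqueness) explicit.
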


As in Remark~\ref{rmk:grassmann}, this result can be generalised to other fibre bundles~\cite{SanMartin1986InvariantBundles} such as Grassmannian bundles to obtain lower Lyapunov exponents.

\section{The cellular flow with sinks}\label{sec:cellular}

In this section, we first discuss the example presented in Theorem~\ref{thm:intro-cellular}, and then treat a similar system but with multiplicative noise.

Consider again the cellular flow with sinks

\begin{equation}\label{eqn:cellular-sde}
    \begin{cases}
        \d x_t &= (\cos(x_t)/2-\cos y_t)\sin x_t \d t+\sigma\d B^1_t\\
        \d y_t &= (\cos(y_t)/2+\cos x_t)\sin y_t \d t+\sigma\d B^2_t
    \end{cases}
\end{equation}
i.e.~the corresponding vector fields on $\mathcal{M} = \mathbb{T}^2$ are

$$X_0(x, y) = \vect{(\cos(x)/2-\cos y)\sin x}{(\cos(y)/2+\cos x)\sin y},\qquad X_1(x,y) = \vect{\sigma}{0},\qquad X_2(x,y) = \vect{0}{\sigma}.$$
Recall that it is a modification of the standard cellular flow~\cite{Brue2024EnhancedFlows}, such that the deterministic dynamics of the system ($\sigma=0$) are attracting for almost all initial conditions (see Figure~\ref{fig:cellular-a}). In the case of the standard cellular flow, which is Hamiltonian, it is more straightforward to show that for any $\sigma>0$ small enough, the Lyapunov exponent $\lambda$ is positive. This chaotic behaviour can be seen as the result of a shearing effect along the level curves of the corresponding Hamiltonian. This is clearly not the case in our system which displays a negative Lyapunov exponent $\lambda$ for $\sigma>0$ close to zero. We however show that the system can be destabilised if the noise $\sigma>0$ is increased enough (see Figure~\ref{fig:cellular-b}).

\subsection{The additive noise case}
Recall that we aim to compute the Lyapunov exponent $\lambda$ via the projective process $(\xi_t)_{t\geq 0} = (\varphi_t, s_t)_{t\geq 0}$ where

$$s_t = \frac{D\varphi_t v}{\|D\varphi_t v\|},\qquad v\in T_x \mathcal{M}.$$

We can compute the random differential equation for $s_t$ via formula~\eqref{eqn:s-eq-additive}. Now identifying $\mathbf{P}(T_x\mathcal{M}) \simeq\mathbf{P}(\mathbb{R})\simeq \mathbb{T}$ and denoting $s_t = (\cos(\theta_t/2), \sin (\theta_t/2))$, we find

\begin{equation}\label{eqn:cellular-h-def}
\diff{\theta_t}{t} = \frac{1}{2} \left(\sin\theta_t (4 \cos x_t \cos y_t-\cos 2 x_t+\cos 2 y_t)-4 \sin x_t \sin
   y_t\right)=:h(x_y, y_t,\theta_t),
\end{equation}
and
\begin{align*}
\lambda &= \int_{\mathbb{T}^3}Q(x,y,\theta) \tilde{\mu}(\d x, \d y, \d \theta)\\ Q(x,y,\theta) &= \frac{1}{4} (\cos 2x +\cos 2y-\cos\theta (4 \cos x \cos y+\cos 2
   y-\cos 2x)),
\end{align*}
where $\tilde{\mu}$ is the unique stationary distribution of the process $(\varphi_t, \theta_t)_{t\geq 0}$. This uniqueness follows (via Corollary~\ref{cor:ergodicity}) from the fact that the base process $(\varphi_t)_{t\geq 0}$ is clearly ergodic and controllable on the compact state space $\mathcal{M}$ as a diffusion with additive noise and the verification of the parabolic H\"ormander condition~\ref{(PH)} for the process $(\varphi_t, \theta_t)_{t\geq 0}$ in the file \texttt{cellular/hypoelltipticity.ipynb} at~\cite{Huggzz/Enclosure-of-Lyapunov-exponents}. 


\begin{proof}[Proof of Theorem~\ref{thm:intro-cellular}] Following the strategy laid out in Section~\ref{sec:cap}, for the parameter $\sigma = \sqrt{2}$, we compute a numerical solution

$$\bar{u} \in\mathrm{Span}\left(\{\cos kx, \sin kx\}_{k=0}^{25}\otimes \{\cos my, \sin my\}_{m=0}^{25}\otimes \{\cos n\theta, \sin n\theta\}_{n=0}^{550}\right)$$
(which can be found in the file \texttt{cellular/ubar}~\cite{Huggzz/Enclosure-of-Lyapunov-exponents}) to the Poisson problem

\begin{equation}\tilde{\mathcal{L}} u = Q - \lambda,\label{eqn:cellular-poisson}\end{equation}
where $h$ is as in~\eqref{eqn:cellular-h-def} and
\begin{equation}\label{eqn:cellular-L-def}
\tilde{\mathcal{L}} = \left(\frac{\cos x}{2}-\cos y\right)\partial_x+\left(\frac{\cos y}{2}+\cos x\right)\partial_{y}+h(x,y,\theta)\partial_{\theta}+\frac{\sigma^2}{2}(\partial_{xx}+\partial_{yy}).
\end{equation}
We define $\bar{\lambda}$ and $\bar{Q}$ as outlined in Section~\ref{sec:cap}, i.e.,

\begin{equation}\label{eqn:barlambda-def}
    \bar{\mathcal{\lambda}} := P_0 (Q -\mathcal{L}\bar{u}),
\end{equation}
and
\begin{equation}\label{eqn:barQ-def}
    \bar{Q} := \mathcal{L} \bar{u}+\bar{\lambda} \approx Q.
\end{equation}
Choosing $W = \tilde{W} = 1$, we find that

$$|\lambda - \bar{\lambda}|\leq \|Q- \bar{Q}\|_{\infty},$$
which can be evaluated directly by summing the absolute values of the Fourier coefficients of $Q - \bar{Q}$. The proof is performed in the file \texttt{cellular/proof} at~\cite{Huggzz/Enclosure-of-Lyapunov-exponents}
\end{proof}

\begin{rmk}
Since the operator $\mathcal{L}$ defined in~\eqref{eqn:cellular-L-def} fulfils the parabolic H\"ormander condition~\ref{(PH)}, the process $(\xi_t)_{t\geq 0}$ is strong Feller and $u$ is expected to be of class $\mathcal{C}^{\infty}$. Therefore, its Fourier coefficients should decay faster than algebraically, and this is why we can get a tight enclosure of $\lambda$ in Theorem~\ref{thm:intro-cellular}. However, many more basis functions are still needed in the non-elliptic direction of the problem (the $\theta$ variable).
\end{rmk}

\begin{rmk}
    One can actually state~\cite[Theorem 4.2.6]{Arnold1998RandomSystems} for this two-dimensional example that for $(\Pb\times \mu)$-almost every $(\omega, x)\in \Omega \times \mathcal{M}$, there exists $v^{*}\in\mathbf{P}\mathcal{M}$ such that for all $v \in \mathbf{P}\mathcal{M}\backslash \{v^{*}\}$
    $$\lim_{t\to\infty}\frac{1}{t}\log\|D\varphi_t(\omega, x)v\| = \lambda,$$
    and
    $$\lim_{t\to\infty}\frac{1}{t}\log\|D\varphi_t(\omega, x)v^{*}\| =: \lambda_2 = 2\lambda_{\Sigma} - \lambda,$$
    where $\lambda_{\Sigma}$ is as in Section~\ref{subsec:volume} and $\lambda_2$ is the second Lyapunov exponent.
     Thus combining~\eqref{eqn:intro-lambda-cellular} and~\eqref{eqn:volume-lambda-cellular}, we obtain that
     $$\lambda_2 = 2\lambda_{\Sigma} - \lambda = -0.11756188842594 \pm 10^{-13}<0.$$
\end{rmk}




\subsection{A modification with multiplicative noise}

Now consider a modification of the cellular flow with sinks~\eqref{eqn:cellular-sde} where we replace the additive noise with a multiplicative one in the spirit of Baxendale~\cite{Baxendale1986AsymptoticDiffeomorphisms}. We now consider the vector fields

\begin{equation}\label{eqn:baxendale_vfs}
\begin{aligned}X_0(x, y) = \vect{(\cos(x)/2-\cos y)\sin x}{(\cos(y)/2+\cos x)\sin y},\qquad X_1&(x,y) = \vect{\sigma\sin x}{0},
\\X_2(x,y) = \vect{\sigma\cos x}{0},\qquad X_3(x,y) = \vect{0}{\sigma\sin y},\qquad &X_4(x,y) = \vect{0}{\sigma\cos y}.
\end{aligned}
\end{equation}

It turns out that the random dynamical system induced by these vector fields has the same \emph{statistical} one-point motion as the previously considered system, i.e.~they are described by the same Markov semi-group. That can be directly seen from the generator which is in both cases

$$\mathcal{L} = \left(\frac{\cos x}{2} - \cos y\right)\sin x \pdiff{}{x} +\left(\frac{\cos y}{2}+\cos x\right)\sin y \pdiff{}{y } +\frac{\sigma^2}{2}\left(\pdiff{^2}{x^2}+\pdiff{^2}{y^2}\right).$$

We will however see that the \emph{dynamical} properties of the stochastic flow $(\varphi_t)_{t \geq 0}$ are very different under this multiplicative noise. Indeed, we will show that the system now displays a negative Lyapunov exponent. Following~\cite{Baxendale1986AsymptoticDiffeomorphisms} (with minor modifications due to our choice of variables), we find that
$$\d \log\varrho_t = Q(x_t, y_t, \theta_t) \d t+\frac{\sigma}{2}\Big[(\cos\theta_t+1)(\cos x_t \d B^1_t -\sin x_t\d B^2_t)+(\cos\theta_t-1)(\cos y_t\d B^3_t - \sin y_t \d B^4_t)\Big],$$
where
$$Q(x,y, \theta) = \frac{1}{4} (\cos 2x +\cos 2y-\cos\theta (4 \cos x \cos y+\cos 2y-\cos 2x)) - \frac{\sigma^2}{2}\cos^2\theta.$$
Similarly, we find that
$$\tilde{\mathcal{L}} = \left(\frac{\cos x}{2} - \cos y\right)\sin x \partial_x +\left(\frac{\cos y}{2}+\cos x\right)\sin y \partial_y+\left(h(x,y,\theta)+\frac{\sigma^2}{2}\sin2\theta\right)\partial_{\theta}+\frac{\sigma^2}{2}\left(\partial_{xx}+\partial_{yy}+2\sin^2\theta\partial_{\theta\theta}\right),$$
where $h$ is as in~\eqref{eqn:cellular-h-def}. These calculations essentially follow from the Stratonovich-to-It\^o correction~\cite[Chapter 3.2]{Pavliotis2014StochasticApplications}.
Now as discussed earlier since $\mathcal{M} = \mathbb{T}^2$, the integrability condition~\eqref{eqn:mult-noise-int} clearly holds and we have that
$$\lambda = \int_{\mathbb{T}^3}Q(x,y, \theta) \tilde{\mu}(\d x, \d y, \d\theta).$$
\begin{thm}
    Consider the stochastic flow $(\varphi_t)_{t\geq 0}$ generated by the vector fields~\eqref{eqn:baxendale_vfs}, with $\sigma = \sqrt{2}$, then for the corresponding Lyapunov exponent $\lambda$, there are the following bounds
    $$\lambda = -0.6124 \pm 1.6\times 10^{-3}<0.$$
\end{thm}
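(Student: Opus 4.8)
The plan is to apply Proposition~\ref{prop:method} (equivalently, the strategy from Section~\ref{sec:cap}) verbatim, using the explicit formulas for $\tilde{\mathcal{L}}$, $Q$ and $h$ already derived for this multiplicative-noise system, in exactly the same way as in the proof of Theorem~\ref{thm:intro-cellular}. First I would record that the relevant state space is $\mathbf{P}\mathcal{M} \simeq \mathbb{T}^3$ (with coordinates $x,y,\theta$) and that it is compact, so setting $W = \tilde{W} = 1$ all of~\ref{(S1)}--\ref{(S5)} are satisfied: for~\ref{(S1)} take the trigonometric basis $\{\cos kx,\sin kx\}\otimes\{\cos my,\sin my\}\otimes\{\cos n\theta,\sin n\theta\}$, for~\ref{(S2)} note that $\tilde{\mathcal{L}}$ has polynomial-trigonometric coefficients and hence maps $P_N$ into $P_M$ for suitable finite $M$, for~\ref{(S3)} and~\ref{(S4)} everything is bounded on the compact torus, and for~\ref{(S5)} observe that $Q$ is itself a trigonometric polynomial. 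I would also need the ergodicity of $(\xi_t)_{t\geq 0}$: since $\mathcal{M}=\mathbb{T}^2$ is compact the one-point process is ergodic, and controllability plus the weak H\"ormander condition~\ref{(WH)} for the projective process on $\mathbf{P}\mathcal{M}$ would be checked (as for the additive case) by a Lie-bracket computation; because the one-point generators of the additive and multiplicative systems coincide, the base process behaves identically, and the bracket computation for the lifted system goes through similarly (this can again be delegated to the accompanying code).

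Next I would construct numerically, for $\sigma=\sqrt{2}$, an approximate solution $\bar{u}\in P_N(\mathcal{H})$ of the Poisson problem $\tilde{\mathcal{L}}u = Q - \lambda$ by solving the least-squares problem~\eqref{eq:leastsquare}, using the explicit $\tilde{\mathcal{L}}$ displayed just before the statement (the one including the extra $\tfrac{\sigma^2}{2}\sin 2\theta\,\partial_\theta$ and $\sigma^2\sin^2\theta\,\partial_{\theta\theta}$ terms) and the explicit $Q$ containing the additional $-\tfrac{\sigma^2}{2}\cos^2\theta$ term. As in the additive case, I expect to need only a modest number of Fourier modes in the $x$ and $y$ directions but many more in the non-elliptic $\theta$ direction. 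Then I would set $\bar{\lambda} := P_0(Q - \tilde{\mathcal{L}}\bar{u})$ and $\bar{Q} := \tilde{\mathcal{L}}\bar{u} + \bar{\lambda}$, so that by Lemma~\ref{lem:Bax} and Proposition~\ref{prop:method}, $\lambda - \bar{\lambda} = \int_{\mathbb{T}^3}(Q-\bar{Q})\,\d\tilde{\mu}$ and hence $|\lambda - \bar{\lambda}| \leq \|Q - \bar{Q}\|_\infty \leq \sum_{m\geq 1}|\epsilon_m|$, the $\ell^1$-norm of the Fourier coefficients of $Q-\bar{Q}$ (using $\mu(W)=1$ and $\|f_m/W\|_\infty = 1$).

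Finally, the only genuinely rigorous step is to carry out the evaluation $\bar{Q} = \tilde{\mathcal{L}}\bar{u} + \bar{\lambda}$ in interval arithmetic (a single matrix-vector multiplication, cf.~Remark~\ref{rmk:mat-vec}) and to sum the resulting interval coefficients; this is executed in the accompanying code and yields both $\bar{\lambda} \approx -0.6124$ and the error bound $\|Q-\bar{Q}\|_\infty \leq 1.6\times 10^{-3}$, from which the claimed enclosure $\lambda = -0.6124 \pm 1.6\times 10^{-3} < 0$ follows. I expect the main obstacle to be a practical rather than conceptual one: because this system is even less elliptic in the $\theta$ direction than the additive-noise version (the diffusion coefficient $\sigma^2\sin^2\theta$ degenerates at $\theta\in\{0,\pi\}$, unlike the uniformly positive $\sigma^2/2$ there), the solution $u$ of the Poisson equation is less regular, its Fourier tail in $\theta$ decays more slowly, and a substantially larger truncation $N$ is required to drive $\|Q-\bar{Q}\|_\infty$ down to $10^{-3}$ — which explains why the enclosure here is looser than in Theorem~\ref{thm:intro-cellular}. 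Verifying that the chosen truncation is nonetheless large enough to close the estimate, and that the interval-arithmetic computation of $\bar{Q}$ does not itself lose too many digits, is the delicate part.
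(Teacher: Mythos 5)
Your proposal follows the paper's proof essentially verbatim: establish ergodicity of the projective process via Proposition~\ref{prop-san-martin} (uniform ellipticity of the one-point generator, which coincides with the additive case, plus an explicit Lie-bracket verification of~\ref{(WH)}), then apply the adjoint method of Section~\ref{sec:cap} with $W=\tilde{W}=1$ and the $\ell^1$ bound on the Fourier coefficients of $Q-\bar{Q}$, with the rigorous interval-arithmetic evaluation delegated to the accompanying code. The only slip is in your closing heuristic: the additive-noise operator~\eqref{eqn:cellular-L-def} has \emph{no} $\partial_{\theta\theta}$ term at all (the projective variable there evolves by a random ODE), so the multiplicative system's degenerate $\sigma^2\sin^2\theta\,\partial_{\theta\theta}$ term makes it \emph{more}, not less, elliptic in the $\theta$ direction — a mischaracterization that does not affect the validity of the argument.
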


\begin{proof}
    The weak H\"ormander condition is verified by checking that $\tilde{X}_1, \tilde{X}_2, \tilde{X_3}, \tilde{X}_4$ span $\mathbb{R}^3$ for $\theta \neq \pi/2 \pm \pi/2$ and $\tilde{X}_0, \tilde{X}_1, \tilde{X}_2, \tilde{X_3}, \tilde{X}_4, [\tilde{X}_0, \tilde{X}_2], [\tilde{X}_0, \tilde{X}_4], [[\tilde{X}_0, \tilde{X}_2], \tilde{X}_4]$ span $\mathbb{R}^3$ for $\theta = \pi/2 \pm \pi/2$. Since the generator $\mathcal{L}$ of the Markov process $(\varphi_t)_{t\geq 0}$ is uniformly elliptic, by Proposition~\ref{prop-san-martin}, there indeed exists a unique stationary measure $\tilde{\mu}(\d \xi) = \tilde{w}(\xi)\d \xi$ for the projective process $(\xi_t)_{t\geq 0} = (\varphi_t, s_t)_{t\geq 0}$. The application of the adjoint method as described in Section~\ref{sec:cap} is available at~\cite{Huggzz/Enclosure-of-Lyapunov-exponents} in the file \texttt{cellular/baxendale\_proof}.
\end{proof}

\section{The randomly forced pendulum}\label{sec:pendulum}

In this section, we discuss the example presented in Theorem~\ref{thm:intro-pendulum}, namely the randomly forced pendulum (or Josephson junction~\cite{Pavliotis2014StochasticApplications})

$$``\;\ddot{x}_t = - \kappa \sin x_t -\gamma \dot{x}_t +\sigma \dot{B}_t \;"$$
with gravitational acceleration $\kappa>0$, friction coefficient $\gamma>0$ and noise strength $\sigma>0$. It translates to the proper stochastic differential equation
\begin{equation}\label{eqn:pendulum-sde}
\begin{cases}
    \d x_t &= y_t\d t\\
    \d y_t &= -(\kappa\sin x_t +\gamma y_t)\d t +\sigma \d B_t
\end{cases}
\end{equation}
on $\mathcal{M} = \mathbb{T} \times \mathbb{R}$. Since this differential equation has $\mathcal{C}^{\infty}$-vector fields with bounded derivatives of all orders, it clearly induces a stochastic flow of diffeomorphisms $(\varphi_t)_{t\geq 0}$ (see \cite[Chapter II.4]{Kunita1984StochasticDiffeomorphisms}).

While this system is the stochastic version of a classical ordinary differential equation and also typically appears in applications related to superconductors~\cite{Kadlec1977OnJunctions}, its study from a mathematically rigorous point of view is so far limited.

Analogously to the previous example, this system can also be seen as an attracting version of a Hamiltonian pendulum (when the friction $\gamma=0$). We are thus interested in the possible chaotic behaviour of the system when $\sigma>0$. Note however that the mechanisms inducive of chaos for the randomly forced pendulum are not so clear: From Theorem~\ref{thm:intro-pendulum}, chaos appears as a consequence of the large noise $\sigma$ in the $y$-direction. However, this allows trajectories to spend more time away from the $x$-axis, where most of the dynamics seem to be happening from the deterministic point of view (see Figure~\ref{fig:pendulum-a}) and does not a priori push trajectories into more expanding regions of the system. The large noise may though act as a catalyst for a stretch-and-fold mechanism when the system crosses the $x$-axis (see Figure~\ref{fig:pendulum-b}).
\subsection{Ergodicity of the projective process}



In this section, we verify the ergodicity of the projective process. We achieve this by following Section~\ref{sec:proj-ergodicity} and first proving the ergodicity of the base process $(\varphi_t)_{t\geq 0}$ (Steps 1--3) and then showing that the projective process $(\xi_t)_{t\geq 0} = (\varphi_t, s_t)_{t\geq 0}$ satisfies the weak H\"ormander condition~\ref{(WH)}. As usual~\cite{CotiZelati2021ASystem, Hairer2021ConvergenceProcesses}, the former is shown via the construction of a judicious Lyapunov function, proving the process is Feller via the parabolic H\"ormander condition~\ref{(PH)} and showing the controllability of $(\varphi_t)_{t\geq 0}$ on $\mathcal{M} = \mathbb{T}\times\mathbb{R}$.

\step{1}[Choice of a Lyapunov function]
Recall that here the generator of the process $(\varphi_t)_{t\geq 0}$ is given by

$$\mathcal{L} = y \pdiff{}{x} -(\kappa\sin x + \gamma y)\pdiff{}{y} +\frac{\sigma^2}{2}\pdiff{^2}{y^2} = y \pdiff{}{x} -\kappa\sin x\pdiff{}{y} +L_y,$$
where
\begin{equation}L_y = -\gamma y\pdiff{}{y}+\frac{\sigma^2}{2}\pdiff{^2}{y^2}\label{eqn:L_y-def}\end{equation}
is the generator of an Ornstein--Uhlenbeck process with stationary density $e^{-\gamma y^2/\sigma^2}$~\cite{Pavliotis2014StochasticApplications}. Thus a natural choice for a Lyapunov function is $W(x,y) = e^{\gamma y^2/(2\sigma^2)}$; we have

\begin{align*}
    \mathcal{L}W(x,y) &= \frac{\gamma}{2\sigma^2}(-\gamma y^2+\sigma^2-2\kappa y\sin x)W(x,y)\\
    &= -c W(x,y) +\left[\frac{\gamma}{2\sigma^2}(-\gamma y^2+\sigma^2-2\kappa y\sin x)+c\right]e^{\gamma y^2/(2\sigma^2)}\\
    &\leq -c W(x,y) + d\Ind{C},
\end{align*}
where $c>0$ and
\begin{align}
    C &:=\left\{y\in \mathbb{R} \,\Big|\,\frac{\gamma}{2\sigma^2}(-\gamma y^2+\sigma^2+2\kappa |y|)+c>0\right\} \nonumber\\
    d &:=\sup_{y\in C}\left[\frac{\gamma}{2\sigma^2}(-\gamma y^2+\sigma^2+2\kappa|y|)+c\right]e^{\gamma y^2/(2\sigma^2)}. \label{eq:defd}
\end{align}

\step{2}[Parabolic H\"ormander condition~\ref{(PH)} for the base process]

We have that

$$X_0(x,y,\theta) = \vect{y}{-\kappa\sin x -\gamma y},\qquad X_2(x,y,\theta) = \vect{0}{\sigma},$$
and thus,
$$\left[X_2, X_0\right](x,y,\theta) = \vect{\sigma}{-\gamma\sigma},$$
which shows that $(\varphi_t)_{t\geq 0}$ fulfils the parabolic H\"ormander condition. 

\step{3}[Controllability of the base process] First note that from~\eqref{eqn:pendulum-sde}, it is clear that $y$ is completely controllable. Now, fixing $x_0\in \mathbb{T}$, note that for all $(\bar{x},\bar{y}) \in \mathbb{T}\times \mathbb{R}\backslash\{(0,0), (\pi,0)\}$, there exists $y_0\in \mathbb{R}$ such that $(\bar{x},\bar{y})\in O^+(x,_0,y_0)$ where $O^+(x,_0,y_0)$ is the forward orbit starting at $(x_0,y_0)$ of the deterministic system (represented in Figure~\ref{fig:pendulum-a}). This shows the controllability and thus ergodicity of the base process $(\varphi_t)_{t\geq 0}$ on $\mathcal{M} = \mathbb{T}\times \mathbb{R}$.


\step{4}[Weak H\"ormander condition for the projective process]
It now remains to be shown that the projective process $(\xi_t)_{t\geq 0} = (\varphi_t, s_t)_{t\geq 0}$ satisfies the weak H\"{o}rmander condition~\ref{(WH)} to apply Proposition~\ref{prop-san-martin}. Writing $s_t = (\cos(\theta_t/2), \sin(\theta_t/2))$, we find that on $\mathbf{P}\mathcal{M}\simeq \mathbb{T}\times \mathbb{R}\times \mathbb{T}$

$$\tilde{X}_0(x,y,\theta) = \begin{pmatrix}
y\\ -\gamma  y-\kappa  \sin x\\-\gamma  \sin \theta +\cos \theta -\kappa\cos x (\cos\theta +1)-1\end{pmatrix}, \qquad \tilde{X}_2(x,y,\theta)  = \begin{pmatrix}
0\\\sigma \\ 0\end{pmatrix}.$$

Again, we have that

$$\left[\tilde{X}_2, \tilde{X}_0\right](x,y,\theta) = \begin{pmatrix}
\sigma\\-\gamma\sigma \\ 0\end{pmatrix}.$$
It thus remains to show that among the remaining vector fields of the Lie algebra, some are non-zero in the direction $e_3$. Now

$$\left[\left[\tilde{X}_2, \tilde{X}_0\right],\tilde{X}_0\right](x,y,\theta) = \begin{pmatrix}
-\gamma  \sigma\\\sigma  \left(\gamma ^2-\kappa  \cos x\right) \\ \kappa  \sigma\sin{x}(\cos \theta +1) \end{pmatrix}.$$
so that the remaining points to check are reduced to the set $\{(x,y,\theta)\in \mathbb{T}\times\mathbb{R}\times\mathbb{T} \mid\theta = \pi\:\mathrm{or}\: x = \pi/2\pm\pi/2\}$.
For $\theta = \pi$, we have that
$$\tilde{X}_0(x,y,\pi) = \begin{pmatrix}
y\\ -\gamma  y-\kappa  \sin x\\-2\end{pmatrix}.$$
Finally, for $x = \pi/2 \pm \pi/2$, we have that

$$\left[\left[\left[\left[\tilde{X}_2, \tilde{X}_0\right],\tilde{X}_0\right],\tilde{X}_0\right],\tilde{X}_2\right](\pi/2 \pm \pi/2,y,\theta) =\mp \begin{pmatrix}
0\\0 \\  \kappa\sigma^2(1+\cos\theta)\end{pmatrix},$$
which finishes the proof of the weak H\"ormander condition~\ref{(WH)} and thus of the ergodicity for the projective process by Proposition~\ref{prop-san-martin}.





\subsection{The adjoint method for the pendulum}\label{sec:adjoint-pendulum}
In this section, we outline the proof of Theorem~\ref{thm:intro-pendulum}. Recall that we must numerically find approximate solutions $\bar{u}$ and $\bar{\lambda}$ to the Poisson problem
\begin{equation}\label{eqn:pendulum-poisson}
    \tilde{\mathcal{L}}u = Q - \lambda,
\end{equation}
where
\begin{equation}\label{eqn:pendulum-L}
\tilde{\mathcal{L}} = y\pdiff{}{x} - (1 + \gamma  \sin \theta -\cos \theta +\kappa (\cos \theta +1) \cos x)\pdiff{}{\theta}- \kappa\sin{x}\pdiff{}{y}+L_y
\end{equation}
and

$$Q(x,y,\theta) = \frac{1}{2} (\gamma  \cos \theta +\sin \theta -\gamma -\kappa  \sin \theta  \cos{x}).$$

We follow the procedure of Section~\ref{sec:cap} and need to choose a judicious Hilbert basis.
Note that (as it is well-known from the study Ornstein--Uhlenbeck process)

$$L_yH_m(\sqrt{\gamma}y/\sigma) = - \gamma m H_m(\sqrt{\gamma}y/\sigma),$$
where the $H_m$'s are Hermite polynomials (see~\cite[p. 775]{Abramowitz1970HandbookSeries}) and $L_y$ is as in~\eqref{eqn:L_y-def}. We thus naturally use the Hilbert basis 
$$\{\cos kx, \sin kx\}\otimes \{h_m(\sqrt{\gamma}y/\sigma)\}\otimes \{\cos n\theta, \sin n\theta\}, $$
where we choose the normalisation $h_m = H_m/\sqrt{2^m m!}$. With respect to this basis, we compute a numerical solution $\bar{u}$ to the Poisson problem~\eqref{eqn:pendulum-poisson} available in the file \texttt{pendulum/ubar} at~\cite{Huggzz/Enclosure-of-Lyapunov-exponents} for the parameters $\kappa = 2/3, \gamma = 1/4$ and $\sigma = 4$.

In the file \texttt{pendulum/compute\_sups.jl} at~\cite{Huggzz/Enclosure-of-Lyapunov-exponents}, we give enclosures for

$$\sup_{y\in \mathbb{R}}\left|\frac{h_m(\sqrt{\gamma}y/\sigma)}{W(x,y)}\right| = \sup_{z\in\mathbb{R}}|h_m(z)e^{-z^2/2}|,$$
for all $m=0, \ldots, 551$, where $W(x,y) = \tilde{W}(x,y) = e^{\gamma y^2/(2\sigma^2)}$. This is achieved by enclosing all the roots of $\partial_z(h_m(z)e^{-z^2/2})$ by combining the Fundamental Theorem of Algebra and the Intermediate Value Theorem. Finally, note that since

$$\mathcal{L} W \leq -cW +d,$$
with any $c>0$ and $d$ as in~\eqref{eq:defd}, we obtain the bound $\mu(W)\leq d/c$ by \cite[Theorem 4.3.(ii)]{Meyn1993StabilityProcesses}. Since $\mu$ is the marginal of $\tilde{\mu}$ on $\mathcal{M} = \mathbb{T} \times \R$, and we also have that

    $$\tilde{\mu}(W) = \int_{\mathbf{P}\mathcal{M}}W\d\tilde{\mu} = \int_{\mathcal{M}}W\d \mu\leq \frac{d}{c},$$
which is computed rigorously for a specific $c$ in the file \texttt{pendulum/Lyapunov\_bound.jl} at~\cite{Huggzz/Enclosure-of-Lyapunov-exponents}. The proof of Theorem~\ref{thm:intro-pendulum} is performed in the file \texttt{pendulum/proof.ipynb}, applying estimate~\eqref{eq:computable_estimate} of Proposition~\ref{prop:method}.

\begin{rmk}
    For this problem, the most challenging part was without a doubt the computation of a good approximate solution $\bar{u}$ to the Poisson problem~\eqref{eqn:pendulum-poisson}, which was done via the least-square problem~\eqref{eq:leastsquare}. This difficulty stems from the very non-elliptic nature of the operator~\eqref{eqn:pendulum-L}. Even accounting for the symmetry in the problem, we used $48,312,000$ basis functions just to confirm the sign of the Lyapunov exponent $\lambda$ (We used $105,568,462$ for the result of Theorem~\ref{thm:intro-pendulum}). A crucial feature of our computation was to first solve the problem on a small number of basis functions and iteratively increase the number of basis functions while taking the result of the previous step as an initial guess for \texttt{LSQR}~\cite{Paige1982LSQR:Squares}. Some additional improvements to speed up our calculation would be to parallelise our matrix-vector multiplication (with several CPUs, using for instance MKL or using a GPU). 
\end{rmk}

\section{The Hopf normal form}\label{sec:hopf}


We now come back to the system studied in Theorem~\ref{thm:intro-Hopf}, namely the Hopf normal form with additive noise

\begin{equation}\label{eqn:Hopf-sys}
    \d\vect{x_t}{y_t} = \left[\vect{\alpha & -\beta}{\beta &\alpha}\vect{x_t}{y_t} - \vect{a & b}{-b & a}\vect{x_t}{y_t}(x_t^2+y_t^2)\right]\d t + \sigma \d \vect{B^1_t}{B^2_t}. 
\end{equation}

This system was first introduced in~\cite{DeVille2011StabilitySystem} in the context of random dynamical systems as a simple perturbation of the deterministic Hopf normal form. See also~\cite{Dijkstra2008,Tantet2020, Wieczorek2009} for applications to ocean and laser dynamics. This system has been conjectured (see also~\cite{Doan2018HopfNoise}) to exhibit the \emph{shear-induced chaos} scenario proposed in~\cite{Lin2008Shear-inducedChaos}, for which the perturbation of a limit cycle induces a stretch-and-fold mechanism leading to chaos. This conjecture can be quantitatively translated into proving a transition from a negative to positive Lyapunov exponent $\lambda_b$ as $b$ increases. This problem was already attacked with computer-assisted techniques in~\cite{Breden2023Computer-AssistedSystems}, but this work only managed to establish such a transition for the \emph{conditioned} Lyapunov exponent, which in particular meant restricting the problem to a bounded domain. In contrast, the method presented in this paper allows to deal with the full problem on $\R^2$, while being much simpler than the rather involved computer-assisted techniques used in~\cite{Breden2023Computer-AssistedSystems}. Recently,      Baxendale~\cite{Baxendale2024LyapunovNoise}, and Chemnitz and Engel~\cite{Chemnitz2023PositiveNoise} did prove that the Lyapunov exponent $\lambda_b$ associated to~\eqref{eqn:Hopf-sys} becomes positive in the asymptotic regime $b\to \infty$. In this work, we tackle the opposite (and complementary) problem for $b$ small, and prove that $\lambda_b>0$ for an explicit parameter range of finite and explicit values of $b$, while also enclosing the critical value of $b$ at which the transition from negative to positive takes place. Note that Baxendale~\cite{Baxendale2024LyapunovNoise}
also made use of the adjoint method in his proof, though in a quite different fashion.

\subsection{Derivation of the Furstenberg--Khasminskii formula}\label{subsec:Hopf-FK}

As in~\cite{Baxendale2024LyapunovNoise, Chemnitz2023PositiveNoise, DeVille2011StabilitySystem, Doan2018HopfNoise}, we prepare the problem by reducing the formula for $\lambda$ to an integral over the ergodic measure $\mu$ of a lower-dimensional and uniformly elliptic diffusion.

In polar coordinates $x_t = r_t \cos\phi_t, y_t = r_t \sin\phi_t$, we have
\begin{numcases}{}
    \d r_t\hspace{-6pt}&$=\left(\alpha r_t -a r_t^3 +\dfrac{\sigma^2}{2r_t}\right)\d t +\sigma \d B^r_t$\nonumber\\
    \d \phi_t\hspace{-6pt}&$= (\beta + br^2_t)\d t +\dfrac{\sigma}{r_t}\d B^{\phi}_t$\label{eqn:Hop-phi-eq}
\end{numcases}
where $B^r_t, B^{\phi}_t$ are independent Brownian motions
$$\d\vect{B^r_t}{B^{\phi}_t} = \vect{\cos\phi_t & \sin\phi_t}{-\sin\phi_t &\cos\phi_t}\d \vect{B^1_t}{B^2_t}.$$
Taking $(\varrho_t, s_t)_{t\geq 0}$ as in~\eqref{eqn:rho-s-def} and similarly as in the previous derivations, posing $s_t = (\cos\theta_t, \sin\theta_t)$ we get
\begin{numcases}{}
    \d \varrho_t \hspace{-6pt}&$= \left(\alpha - 2a r_t^2 +\sqrt{a^2+b^2}r^2_t\sin(2\theta_t -\chi_0 - 2\phi_t)\right)\varrho_t\d t$ \nonumber\\
    \d \theta_t \hspace{-6pt}&$= \left(\beta 
+ 2b r_t^2 +\sqrt{a^2+b^2}r^2_t\cos(2\theta_t -\chi_0 - 2\phi_t)\right)\d t$ \label{eqn:Hop-theta-eq}
\end{numcases}
where $\chi_0 = \arccos(b/\sqrt{a^2+b^2})$. Now, letting $\psi_t = \theta_t - \chi_0/2 - \phi_t$ and substracting~\eqref{eqn:Hop-theta-eq} by~\eqref{eqn:Hop-phi-eq}, we obtain 
\begin{numcases}{}
    \d r_t \hspace{-6pt}&$=\left(\alpha r_t -a r_t^3 +\dfrac{\sigma^2}{2r_t}\right)\d t +\sigma \d B^r_t$\label{eqn:Hopf-r-eq}\\
    \d \psi_t \hspace{-6pt}&$= \left( b r_t^2 +\sqrt{a^2+b^2}r_t^2\cos(2\psi_t)\right)\d t-\dfrac{\sigma}{r_t}\d B^{\phi}_t$\label{eqn:Hopf-psi-eq}\\
    \d \varrho_t \hspace{-6pt}&$= \left(\alpha - 2a r_t^2 +\sqrt{a^2+b^2}r^2_t\sin(2\psi_t)\right)\varrho_t\d t\label{eqn:Hopf-rho-eq}$
\end{numcases}
Thus,
$$\lambda = \int_{\R_+\times \mathbb{T}}Q(r, \psi) \mu(\d r, \d \psi),$$
where $\mu$ is the unique stationary distribution of $(\xi_t)_{t\geq 0} = (r_t, \psi_t)_{t\geq 0}$ given by Eq. (\ref{eqn:Hopf-r-eq}--\ref{eqn:Hopf-psi-eq}) and
 $$Q(r, \psi) = \alpha - 2ar^2+\sqrt{a^2+b^2}r^2\sin2\psi.$$

Note that this is a slightly different formulation than in~\cite{Baxendale2024LyapunovNoise,Breden2023Computer-AssistedSystems,Chemnitz2023PositiveNoise,DeVille2011StabilitySystem,Doan2018HopfNoise} which factors in an additional symmetry. Our formulation turns out to be more practical for numerical purposes as it allows us to recover the polar coordinate Laplacian while still exploiting this symmetry. Indeed, we get that the generator of $(\xi_t)_{t\geq 0} = (r_t, \psi_t)_{t\geq 0}$ is

$$\mathcal{L} = (\alpha r - ar^3)\pdiff{}{r} +\left( b r^2 +\sqrt{a^2+b^2}r^2\cos2\psi\right)\pdiff{}{\psi} +\frac{\sigma^2}{2}\Delta$$
where $\Delta$ is the polar coordinate Laplacian

$$\Delta = \frac{1}{r}\pdiff{}{r}\left(r\pdiff{}{r}\right) +\frac{1}{r^2}\pdiff{^2}{\psi^2}.$$

Moreover, note that since the generator $\mathcal{L}$ of $(r_t, \psi_t)_{t\geq 0}$ is uniformly elliptic and e.g.~has $e^{ar^4/(4\sigma^2)}$ as a Lyapunov function, the process $(r_t, \psi_t)_{t\geq 0}$ indeed has a unique stationary measure $\mu$ on $\mathbb{R}^2$.

\subsection{The adjoint method for a single parameter}

Before proving Theorem~\ref{thm:intro-Hopf} in the next subsection, we first explain how to rigorously enclose the Lyapunov exponent of~\eqref{eqn:Hopf-sys} for a fixed value of $b$. 

We first need to numerically solve the Poisson problem
\begin{equation}\label{eqn:Hopf-poisson}
    \mathcal{L} u = Q -\lambda,
\end{equation}
where $\mathcal{L}$ and $Q$ are as in Section~\ref{subsec:Hopf-FK}. This equation is ``polynomial'' with respect to these new polar coordinates $(r_t, \psi_t)$. This allows us to choose a suitable basis in polar coordinates, rendering our problem sparse. Here, we use the following basis in the Hermite--Laguerre family (or a scaling of it)
$$H^m_n(r, \psi) := r^{|m|}L^{(|m|)}_n(r^2)\begin{cases}\sin{m\psi}\qquad m>0\\ \cos{m\psi}\qquad m\leq 0
\end{cases}$$
where $L^{(m)}_n$ denotes the generalised Laguerre polynomial of degree $n$ and parameter $m$ (see~\cite[p. 775]{Abramowitz1970HandbookSeries}). We choose the normalisation convention of numerical analysis and denote $h^m_n = \sqrt{n!/(m+n)!}H^m_n$ for all $m\in \mathbb{Z}, n\in\mathbb{N}$. Though this basis is well-known to physicists and has been studied mathematically in~\cite{Ito1952ComplexIntegral,Chen2014OnOperators}, it has not been used much for numerical purposes (see~\cite{Breden2024} for a recent example in a computer-assisted context).

\begin{rmk}
    Note that our choice of basis does not perfectly match the ``physics'' of our problem. Indeed, our basis is orthogonal with respect to the Gaussian measure $e^{-r^2}r\d r \d \psi$. One could expect better results with a basis orthogonal with respect to the Freud weight $e^{-(ar^4/2-\alpha r^2)/\sigma^2}r\d r \d \psi$ which is the marginal of $\mu$ onto $r$. However, the construction of such a basis is beyond the scope of this work.
    Nevertheless, our choice of basis does not prevent the application of our method as the only rigorous computation needed is essentially the application of the operator $\mathcal{L}$ to $\bar{u}$. This is in contrast with computer-assisted proofs for the solutions of PDEs which would typically necessitate some \textit{a priori} estimates on the inverse of some part of the operator $\mathcal{L}$ with respect to a suitable basis.
\end{rmk}

For $\alpha = a = 4, \sigma = \sqrt{2}$ and $b = 21.5381$, in the file \texttt{Hopf/proof} at~\cite{Huggzz/Enclosure-of-Lyapunov-exponents}, with respect to this basis, we compute an ansatz $\bar{u}$ to~\eqref{eqn:Hopf-poisson} represented in Figure~\ref{fig:Hopf_ubar} and obtained the following result.

\begin{thm}
    For the Hopf normal form with additive noise~\eqref{eqn:Hopf-sys}, with the parameters $\alpha = a = 4, \sigma = \sqrt{2}$ and $b = 21.5381$, the Lyapunov exponent $\lambda$ is positive.
\end{thm}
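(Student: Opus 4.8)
\textit{Proof strategy.}
The plan is to apply the general framework of Proposition~\ref{prop:method} to the reduced, uniformly elliptic diffusion $(\xi_t)_{t\ge0}=(r_t,\psi_t)_{t\ge0}$ on $\mathcal{X}=\mathbb{R}_+\times\mathbb{T}$ constructed in Section~\ref{subsec:Hopf-FK}, with $q=Q$, where $Q(r,\psi)=\alpha-2ar^2+\sqrt{a^2+b^2}\,r^2\sin2\psi$. Recall that the Furstenberg--Khasminskii reduction of Section~\ref{subsec:Hopf-FK} already gives $\lambda=\int_{\mathbb{R}_+\times\mathbb{T}}Q\,\d\mu$, with $\mu$ the unique stationary measure of $(r_t,\psi_t)_{t\ge0}$ (which exists by uniform ellipticity together with the Lyapunov function $e^{ar^4/(4\sigma^2)}$), so it remains only to enclose this integral. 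First I would verify the structural hypotheses~\ref{(S1)}--\ref{(S5)}. For~\ref{(S1)}--\ref{(S2)}: the normalised Hermite--Laguerre family $\{h^m_n\}$ introduced above is a Hilbert basis containing the constant function, and crucially the generator $\mathcal{L}=(\alpha r-ar^3)\partial_r+\left(br^2+\sqrt{a^2+b^2}\,r^2\cos2\psi\right)\partial_\psi+\tfrac{\sigma^2}{2}\Delta$ has polynomial coefficients in these polar coordinates, so it maps each $P_N(\mathcal{H})$ into some $P_M(\mathcal{H})$ with a \emph{sparse} Galerkin matrix; the same polynomial structure makes~\ref{(S5)} immediate since $Q\in\mathrm{Span}\{h^m_n\}$ is a finite combination. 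For~\ref{(S3)}--\ref{(S4)} I would take $W=\tilde W=e^{ar^4/(4\sigma^2)}$; a direct computation gives $\mathcal{L}W=\left[-\tfrac{a^2r^6}{2\sigma^2}+\tfrac{a\alpha r^4}{\sigma^2}+2ar^2\right]W$, hence a Foster--Lyapunov inequality $\mathcal{L}W\le-cW+d\Ind{C}$ on a compact set $C$, which both dominates every polynomial (so $\sup_{m,n}\|h^m_n/W\|_\infty<\infty$) and yields the explicit bound $\mu(W)=\tilde\mu(W)\le d/c$ via~\cite[Theorem 4.3.(ii)]{Meyn1993StabilityProcesses}.

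With the framework in place, the rest is computational. I would: (i) load the precomputed numerical ansatz $\bar u\in P_N(\mathcal{H})$ from \texttt{Hopf/proof} at~\cite{Huggzz/Enclosure-of-Lyapunov-exponents}, obtained by solving the least-squares problem~\eqref{eq:leastsquare} associated with the Poisson equation~\eqref{eqn:Hopf-poisson}; (ii) \emph{rigorously} apply $\mathcal{L}$ to $\bar u$ in interval arithmetic --- a single sparse matrix-vector product, cf.~Remark~\ref{rmk:mat-vec} --- to obtain $\bar\lambda=P_0(Q-\mathcal{L}\bar u)$ and $\bar Q=\mathcal{L}\bar u+\bar\lambda$, and thus the finitely many Hermite--Laguerre coefficients $\{\epsilon_m\}$ of $Q-\bar Q$; (iii) enclose each $\|h^m_n/W\|_\infty=\sup_r\big|r^{|m|}L^{(|m|)}_n(r^2)\,e^{-ar^4/(4\sigma^2)}\big|$ appearing in $Q-\bar Q$ by locating the finitely many roots of the polynomial factor of the $r$-derivative (as done for the pendulum), and bound $\mu(W)\le d/c$ for an explicit admissible $c$; and (iv) combine these via estimate~\eqref{eq:computable_estimate} to obtain a rigorous $|\lambda-\bar\lambda|\le\varepsilon$. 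The theorem follows as soon as the computed value satisfies $\bar\lambda>\varepsilon$; since $b=21.5381$ lies just past the numerically observed transition, $\bar\lambda$ is small and positive, so the whole argument hinges on making $\varepsilon$ genuinely small.

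\textit{Main obstacle.}
The hard part is producing a sufficiently accurate approximate solution $\bar u$. In contrast with the cellular flow, where strong parabolic hypoellipticity forces super-algebraic decay of the Poisson solution's coefficients, here one must resolve the solution on the unbounded domain $\mathbb{R}_+\times\mathbb{T}$ using a basis orthogonal with respect to the Gaussian weight $e^{-r^2}$ rather than the Freud weight $e^{-(ar^4/2-\alpha r^2)/\sigma^2}$ that actually governs the $r$-marginal of $\mu$; Galerkin convergence is therefore not guaranteed to be spectral, and one expects to need a large truncation dimension together with a continuation-in-$N$ strategy (solving on few modes and bootstrapping the next solve with \texttt{LSQR}) to drive the residual $Q-\bar Q$ down far enough that $\varepsilon<\bar\lambda$. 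A secondary technical point is ensuring the interval-arithmetic enclosures of the suprema $\|h^m_n/W\|_\infty$ --- for the oscillatory, fast-growing Laguerre polynomials --- remain tight enough that accumulated overestimation in $\varepsilon$ does not swamp the small positive margin $\bar\lambda$.
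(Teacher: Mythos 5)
Your proposal follows the paper's proof almost exactly: same reduction to the $(r_t,\psi_t)$ diffusion on $\mathbb{R}_+\times\mathbb{T}$, same Furstenberg--Khasminskii formula, same Hermite--Laguerre basis with its sparse polynomial action of $\mathcal{L}$, same weight $W=e^{ar^4/(4\sigma^2)}$ (the paper's $W=e^{cr^4/(2\sigma^2)}$ with the choice $c=a/2$), and the same rigorous matrix--vector evaluation of $\bar Q=\mathcal{L}\bar u+\bar\lambda$ followed by estimate~\eqref{eq:computable_estimate}. The one point where you genuinely diverge is the bound on $\mu(W)$: you propose the generic Foster--Lyapunov route $\mathcal{L}W\le -cW+d\,\Ind{C}$ and $\mu(W)\le d/c$ via~\cite[Theorem 4.3.(ii)]{Meyn1993StabilityProcesses}, as in the pendulum example, whereas the paper exploits the fact that the radial equation~\eqref{eqn:Hopf-r-eq} is autonomous, so the $r$-marginal of $\mu$ is known in closed form, $\mu(\d r,\mathbb{T})=Z^{-1}re^{-(ar^4/2-\alpha r^2)/\sigma^2}\d r$, which turns $\mu(W)$ into an explicit one-dimensional integral expressible via $\mathrm{erf}$ and rigorously enclosable with \texttt{Arb}. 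Your route is legitimate in principle --- your computation of $\mathcal{L}W=\bigl[-\tfrac{a^2r^6}{2\sigma^2}+\tfrac{a\alpha r^4}{\sigma^2}+2ar^2\bigr]W$ is correct and the drift is eventually contracting --- but it is strictly weaker: $d/c$ generally overestimates $\mu(W)$ by a nontrivial factor, and since $\mu(W)$ multiplies the entire error $\varepsilon$ while $\bar\lambda$ at $b=21.5381$ is barely positive (this $b$ is the right endpoint of the enclosure of the transition value $b^*$), you would be surrendering margin exactly where you can least afford it. If the $d/c$ bound proved too loose you would have to optimise over $c$ and the level set $C$, or simply switch to the exact marginal computation, which here costs nothing. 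A last cosmetic point: for this example there is no separate $\tilde\mu$; the projective variable has already been folded into $\psi$, so the relevant measure is just $\mu$ on $\mathbb{R}_+\times\mathbb{T}$.
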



\begin{figure}[h]
    \centering
    \includegraphics[width=0.5\textwidth]{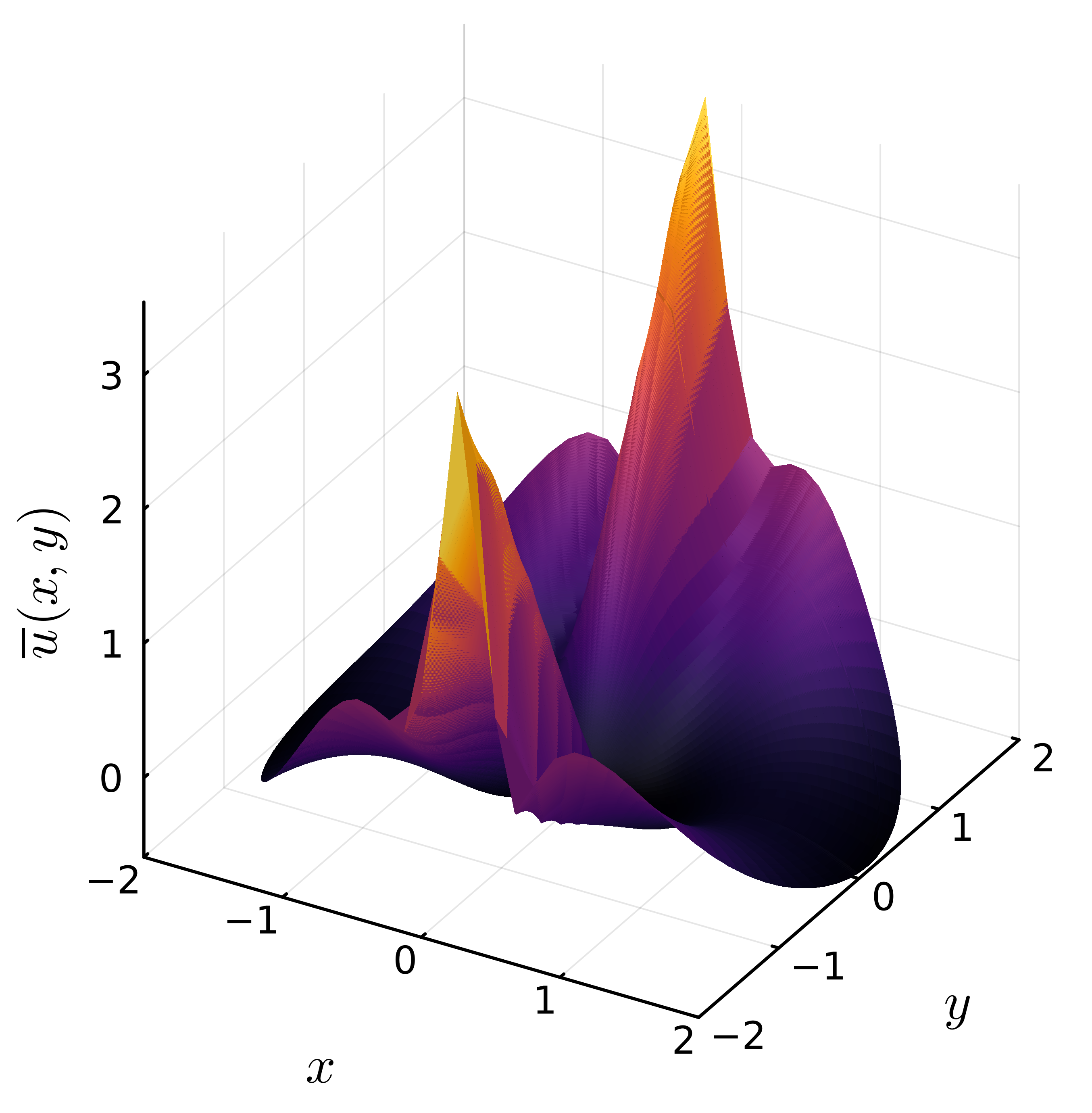}
    \caption{Plot of the numerical solution $\bar{u}$ to Eq.~\eqref{eqn:Hopf-poisson} with $a = \alpha = 4, \sigma = \sqrt{2}$ and $b =21.5381$.\label{fig:Hopf_ubar}}
\end{figure}

Finally, note that the choice of $W = \tilde{W}$ is quite straightforward as from~\eqref{eqn:Hopf-r-eq}, the marginal of $\mu$ on the radial direction is simply

$$\mu(\d r , \mathbb{T}) = \frac{re^{-(ar^4/2-\alpha r^2)/\sigma^2}}{Z}\d r, \qquad Z = \int_0^{\infty}re^{-(ar^4/2-\alpha r^2)/\sigma^2}\d r.$$
Therefore, choosing for instance $W$ of the form

$$W(r) = e^{c r^4/(2\sigma^2)}, \qquad 0<c<a,$$
we can compute the integral

$$\mu(W) = \frac{1}{Z}\int_0^{\infty}re^{((c-a) r^4/2 + \alpha r^2)/\sigma^2}\d r = \frac{\sqrt{a}e^{\alpha^2 c/2a\sigma^2(a-c)}(1+\mathrm{erf}(\alpha/(\sigma\sqrt{2(a-c)})))}{\sqrt{a-c}(1+\mathrm{erf}(\alpha/(\sigma\sqrt{2a})))},\qquad \alpha>0,$$
where the function $\mathrm{erf}$ can be rigorously enclosed using
\texttt{Arb}~\cite{Johansson2017Arb:Arithmetic} which implements the evaluation of special functions in ball arithmetic. Here, we chose $c = a/2$.
As in Section~\ref{sec:adjoint-pendulum}, we computed the suprema of $|H^m_n/W|$ in the file \texttt{Hopf/compute\_sups.jl} at~\cite{Huggzz/Enclosure-of-Lyapunov-exponents}.



\subsection{Continuation with respect to the shear \texorpdfstring{$b$}{b}}\label{subsec:hopf-cont}

We now show how our method can be combined with continuation techniques recently developed in~\cite{Breden2023AExpansions}, which we implement in the file \texttt{Hopf/continuation} at~\cite{Huggzz/Enclosure-of-Lyapunov-exponents}. We now want to enclose the parameter-dependent Lyapunov exponent

$$\lambda_b = \int_{\mathbb{R}_+\times \mathbb{T}}Q_b \d\mu_b = \int  _{\mathbb{R}_+\times \mathbb{T}}\left(\alpha - 2ar^2+\sqrt{a^2+b^2}r^2\sin2\psi\right)\mu_b(\d r, \d \psi),$$
for all $b$ in a parameter range $[b_-,b_+]$. We first define an approximate solution $\bar{u}_b$ for the Poisson problem $\mathcal{L}_b u_b = Q_b - \lambda_b$ (where these are all now parameter-dependent) for all $b\in[b_-, b_+]$ of the form

$$\bar{u}_b(r, \psi) = \sum_{k=0}^K\sum_{m, n} c_{k, m, n} T_k(b)h^m_n(r, \psi),$$
where $T_k$ denotes the Cheybyshev polynomial of the first kind of degree $k$ on $[b_-, b_+]$. This is done by computing approximate solutions for $K+1$ values of $b$ in $[b_-, b_+]$ (taken at the Chebyshev nodes) and interpolating them.

We then need to rigorously apply $\mathcal{L}_b$ to $\bar{u}_b$, for $b\in[b_-, b_+]$. This could be done \emph{exactly}, i.e.~$\mathcal{L}_b \bar{u}_b$ would have a finite expansion in the (Hermite--Laguerre)--Chebyshev basis, except for the fact that $Q_b$ is not polynomial in $b$. We therefore first build a degree $K$ polynomial $p$ (in the Chebyshev basis) approximating $\sqrt{a^2+b^2}$ and compute an error bound $\varepsilon$ such that
$$\sup_{b\in [b_-,b_+]}\left|p(b)-\sqrt{a^2+b^2}\right|<\varepsilon.$$
We then denote $\hat{Q}_b := \alpha - 2ar^2+p(b)r^2\sin2\psi,$
which is now polynomial in $b$ and should approximate $Q_b$ well in $b$, and
$$\hat{\mathcal{L}}_b := (\alpha r - ar^3)\partial_r +\left( b r^2 +p(b)r^2\cos 2\psi\right)\partial_{\psi} +\frac{\sigma^2}{2}\Delta.$$
We now have that $\hat{\mathcal{L}}\bar{u}_b - \hat{Q}_b$ is polynomial of degree $2K$ in $b$ and given that it is written in terms of Chebyshev coefficients, it is straightforward to give an upper bound on
$$\sup_{b\in[b_-, b_+]}\|(\hat{\mathcal{L}}_b\bar{u}_b - \hat{Q}_b + \bar{\lambda}_b)/W\|_{\infty},$$
where $\bar{\lambda}_b = P_0(\hat{Q}_b - \hat{\mathcal{L}}_b\bar{u}_b)$.

We now incorporate the errors coming from approximating $Q_b$ by $\hat{Q}_b$

\begin{align*}
    |\bar{\lambda}_b-\lambda_b|&\leq \left|\bar{\lambda}_b - \int_{\R_+\times\mathbb{T}}Q_b\d \mu_b\right|\\
    &\leq \left|\bar{\lambda}_b-\int_{\R_+\times\mathbb{T}}\hat{Q}_b\d \mu_b\right|+\left|\int_{\R_+\times\mathbb{T}}(Q_b-\hat{Q}_b)\d \mu_b\right|\\
    &\leq \left|\int_{\R_+\times\mathbb{T}}(\bar{\lambda}_b -\hat{Q}_b+\hat{\mathcal{L}}\bar{u}_b)\d \mu_b\right|+\left|\int_{\R_+\times\mathbb{T}}(\hat{\mathcal{L}}\bar{u}_b)\d \mu_b\right|+\left|\int_{\R_+\times\mathbb{T}}(Q_b-\hat{Q}_b)\d \mu_b\right|.
\end{align*}
We then have,

$$\left|\int_{\R_+\times\mathbb{T}}(\bar{\lambda} -\hat{Q}_b+\hat{\mathcal{L}}\bar{u}_b)\d \mu_b\right|\leq \mu(W)\sup_{b}\|(\hat{\mathcal{L}}_b\bar{u}_b - \hat{Q}_b + \bar{\lambda}_b)/W\|_{\infty} =: \delta_1,$$
which can be bounded explicitly. Similarly, using the fact that $\mathcal{L}\bar{u}_b$ is $\mu_b$-mean zero

\begin{align*}
    \left|\int_{\R_+\times\mathbb{T}}(\hat{\mathcal{L}}\bar{u}_b)\d \mu_b\right| &= \left|\int_{\R_+\times\mathbb{T}}(\hat{\mathcal{L}}\bar{u}_b-\mathcal{L}\bar{u}_b)\d \mu_b\right|\\
    &\leq \left|\int_{\R_+\times\mathbb{T}}(\sqrt{a^2+b^2}-p(b))r^2\sin(2\psi)\partial_{\psi}\bar{u}_b\d \mu_b\right|\\
    &\leq \varepsilon\mu(W)\sup_{b\in [b_-,b_+]}\|(r^2\sin(2\psi)\partial_{\psi}\bar{u}_b)/W\|_{\infty}=:\delta_2,
\end{align*}
which also can be bounded explicitly,
and finally
$$\left|\int_{\R_+\times\mathbb{T}}(Q_b-\hat{Q}_b)\d \mu_b\right|\leq \varepsilon\mu(W) \|r^2\sin(2\psi)/W\|_{\infty}=:\delta_3,$$
so that, for all $b\in [b_-, b_+]$,
$$|\lambda_b-\bar{\lambda}_b|\leq \delta_1+\delta_2+\delta_3.$$



\begin{rmk}
    The two important parameters of the system are $\alpha$ and $b$ as the sign of the Lyapunov exponent is invariant under a proper rescaling with respect to the other parameters (see~\cite[Proposition 3.1]{Baxendale2024LyapunovNoise} for instance). Our method could of course be naturally extended to compute a full rigorous two-parameter bifurcation diagram for the Lyapunov exponent. While this would be for a bounded parameter region, one could hope to combine it with the complementary results of~\cite{Baxendale2024LyapunovNoise,Chemnitz2023PositiveNoise} (with a more precise treatment to make this analysis more quantitative) to obtain a diagram for $(\alpha, b)\in \mathbb{R}^2$.
\end{rmk}



\section{Proof of chaos for the stochastic Duffing--van der Pol system}
\label{sec:Duffing}

In this Section, we explain how to apply the adjoint method to system~\eqref{eqn:bax_vfs}, leading to the proofs of Theorem~\ref{thm:intro-Duffing} and Theorem~\ref{thm:Duffing-average-sign}.
As shown in~\cite[p. 263]{Baxendale2004StochasticEquation}, the sign of the Lyapunov exponent of the system~\eqref{eqn:bax_vfs} only depends on the quantities $\beta\nu^2/\sigma^2$ and $b\nu/a$, and we can assume without loss of generality $\nu/a =  \nu/\sigma = 1$. Henceforth, we thus consider the system~\eqref{eqn:gen-sde} on $\mathcal{M} = \R^2\backslash\{0\}$ with the vector fields

\begin{equation}
\begin{aligned}X_0(x, y) = \left(\frac{\beta}{2}-\frac{b}{8}(x^2+y^2)\right)\vect{x}{y}-\frac{3}{8}(x^2+y^2)\vect{y}{-x}&,
\\X_1(x,y) = \frac{1}{2\sqrt{2}}\vect{x}{-y},\qquad X_2(x,y) = \frac{1}{2\sqrt{2}}\vect{y}{x},\qquad X_3(x,y) =& \frac{1}{2}\vect{y}{-x},
\end{aligned}
\end{equation}
depending only on the parameters $b$ and $\beta$.

Note that when this system is considered on full space $\mathbb{R}^2$, it displays a multiplicity of stationary distributions with two ergodic components $\{0\}$ and $\R^2\backslash\{0\}$. However, as will become clear, this does not prevent us from applying the adjoint method. While the Lyapunov exponent associated to the system linearised at $0$ can be found explicitly, finding bounds on the Lyapunov exponent for the system on $\R^2\backslash\{0\}$ is more challenging. Indeed, the degeneracy of the diffusion around $0$ creates an additional difficulty which we remove in a rather interesting fashion.

To the best of our knowledge, the chaotic behaviour of the Duffing--van der Pol system~\eqref{eqn:Duffing} was previously only shown in~\cite{Baxendale2002LyapunovSystems} for $b = 0$ and small $\varepsilon>0$ (and thus for small $b>0$ by a continuity argument).

\subsection{Derivation of the Furstenberg-Khasminskii formula}

Following~\cite{Baxendale2004StochasticEquation}, writing the system in polar coordinates, i.e.~$(x_t,y_t) = (r_t\cos\phi_t,\sin\phi_t)$ and passing to It\^o integration gives
\begin{numcases}{}
    \d r_t \hspace{-6pt}&$=\left(\dfrac{\beta r_t}{2} - \dfrac{br^3_t}{8} +\dfrac{3r_t}{16}\right)\d t +\dfrac{1}{2\sqrt{2}}r_t\left(\cos(2\phi_t) \d B^1_t+\sin(2\phi_t)\d B^2_t\right)$,\nonumber\\
    \d \phi_t \hspace{-6pt}&$ = \dfrac{r^2_t}{8}\d t +\dfrac{1}{2\sqrt{2}}\left(-\sin(2\phi_t)\d B^1_t +\cos(2\phi_t)\d B^2_t\right)-\dfrac{1}{2}\d B^3_t$.\label{eqn:bax-phi-eq}
\end{numcases}
%
Now, taking $(\varrho_t, s_t)_{t\geq 0}$ as in~\eqref{eqn:rho-s-def} and posing $s_t = (\cos \theta_t, \sin\theta_t)$, one obtains
\begin{numcases}{}
    \d \log\varrho_t \hspace{-6pt}&$= \left(\dfrac{\beta}{2}+\dfrac{1}{8}-\dfrac{b r^2_t}{8}-\dfrac{br^2}{4}\cos^2(\theta_t-\phi_t)+\dfrac{3r^2_t}{8}\sin(2\theta_t-2\phi_t)\right)\d t$\nonumber\\
    &\quad$+\dfrac{1}{2\sqrt{2}}\left(\cos(2\theta_t) \d B^1_t+\sin(2\theta_t)\d B^2_t\right)$, \nonumber\\
    \d \theta_t \hspace{-6pt}&$= r^2_t\left(\dfrac{1}{8}+\dfrac{b}{8}\sin(2\theta_t - 2\phi_t)+\dfrac{3}{4}\cos^2(\theta_t-\phi_t)\right)\d t$\nonumber\\
    &\quad$+\dfrac{1}{2\sqrt{2}}\left(-\sin(2\theta_t)\d B^1_t +\cos(2\theta_t)\d B^2_t\right) -\dfrac{1}{2}\d B^3_t$. \label{eqn:bax-theta-eq}
\end{numcases}
%
Defining independent Brownian motions $B^r_t, B^{\phi}_t$ as
$$\d\vect{B^r_t}{B^{\phi}_t} = \vect{\cos2\phi_t & \sin 2\phi_t}{-\sin2\phi_t &\cos2\phi_t}\d \vect{B^1_t}{B^2_t},$$
and $\psi_t = 2(\theta_t -\phi_t)$, by subtracting~\eqref{eqn:bax-theta-eq} by~\eqref{eqn:bax-phi-eq}, one obtains
\begin{numcases}{}
    \d r_t \hspace{-6pt}&$=\left(\dfrac{\beta r_t}{2}-\dfrac{br^3_t}{8}+\dfrac{3r_t}{16}\right)\d t+\dfrac{1}{2\sqrt{2}}r_t\d B^r_t$\label{eqn:bax-r-eq}\\
    \d \psi_t \hspace{-6pt}&$= r^2_t\left(\dfrac{b}{4}\sin\psi_t+\dfrac{3}{4}\left(1+\cos\psi\right)\right)\d t +\dfrac{1}{\sqrt{2}}
    \left(-\sin\psi_t\d B^r_t+(\cos\psi_t-1)\d B^{\phi}_t\right)$,\label{eqn:bax-psi-eq}\\
    \d \log\varrho_t \hspace{-6pt}&$= \left(\dfrac{\beta}{2}+\dfrac{1}{8}-\dfrac{b r^2_t}{8}\left(2+\cos\psi_t\right)+\dfrac{3r^2_t}{8}\sin\psi_t\right)\d t+\dfrac{1}{2\sqrt{2}}\left(\cos\psi_t\d B^r_t+\sin\psi_t\d B^{\phi}_t\right)$.\label{eqn:bax-rho-eq}
\end{numcases}
The existence and uniqueness of a stationary measure $\mu_{\beta}$ on $(0,\infty) \times \mathbb{T}$ for the process $(r_t,\psi_t)_{t\geq 0}$ for $\beta>-1/4$ is shown in~\cite{Baxendale2004StochasticEquation}. Furthermore, note that from~\eqref{eqn:bax-r-eq}, the process $(r_t)_{t\geq 0}$ is now decoupled so that the marginal of $\mu_{\beta}$ onto $\R^*_+:=(0,\infty)$ is given by
\begin{equation}\mu_{\beta}(\d r, \mathbb{T}) = \frac{1}{Z_{\beta}}r^{1+8\beta}\exp\left(-br^2\right), \qquad Z_{\beta} = \frac{1}{2}b^{-(1+4\beta)}\Gamma(1+4\beta).\label{eqn:mu-marg}
\end{equation}

\subsection{An adaptation of the adjoint method}
\label{sec:adj_Duffing}

We now explain how to adapt the ideas from Section~\ref{sec:cap_framework} to this specific problem. The generator of the process $(r_t,\psi_t)_{t\geq 0}$ is given by
$$\mathcal{L}_{\beta} = -\frac{r}{16} \left(2 b r^2-8 \beta -3\right) \partial_r+\frac{r^2}{16} \partial_{rr}+\frac{r^2}{4} (b \sin \psi+3 \cos \psi +3)\partial_{\psi}-\frac{r}{4} \sin \psi \partial_{r\psi}+\frac{1}{2} (1-\cos \psi )\partial_{\psi\psi},$$
and we want to approximately solve the Poisson problem
\begin{equation}\label{eqn:bax-poisson}
    \mathcal{L}_{\beta}u = Q_{\beta} - \mathcal{\lambda}_{\beta}, \qquad \lambda_{\beta} :=\int_{\R^*_+\times\T}Q_{\beta}\d\mu_{\beta},
\end{equation}
where, from~\eqref{eqn:bax-rho-eq}, the infinitesimal growth rate along the projective flow is
$$Q_{\beta}(r,\psi) = \frac{\beta}{2}+\frac{1}{8}-\frac{b}{8}r^2(2+\cos\psi)+\frac{3}{8}r^2\sin\psi.$$
We consider the problem for a fixed $\beta$, the continuation procedure allowing to get a control that is uniform in $\beta$ being the same as in Section~\ref{subsec:hopf-cont} (and even simpler in practice, since here $\mathcal{L}_{\beta}$ and $Q_{\beta}$ are already polynomials in $\beta$).

A natural basis to consider in order to find an approximate solution $\bar{u}$ could be
$$\{\cos m\psi,\sin m\psi\}_{m\in\N}\otimes \{L_n(br^2)\}_{n\in\N}$$
built from the Laguerre polynomials $\{L_n\}_{n\in\N}=\{L^{(0)}_n\}_{n\in\N}$. However, one rapidly realises that such a polynomial basis is not quite suitable. 


Indeed, let us first look at a simpler Poisson problem, chosen somewhat arbitrarily, namely
$$\mathcal{L}_{\beta}v = \frac{b}{8}r^2 - \mathcal{I}, \qquad \text{where } \mathcal{I} = \int_{\mathbb{R}^*_+\times\T}\frac{b}{8}r^2\mu_{\beta}(\d r, \d\psi) = \frac{\beta}{2}+\frac{1}{8} \mbox{ by~\eqref{eqn:mu-marg}}.$$
Then, one can compute that the solution to this equation is given (up to an additive constant) by $v(r,\psi) = -\log r$. This function is singular around $r=0$, and we expect the solution $u$ to Eq.~\eqref{eqn:bax-poisson} to have a similar behaviour near the origin, therefore it is hopeless to try to approximate $u$ with a purely polynomial basis. However, it turns out that one can simply add this single extra $\log$ function to our basis in order to resolve the Poisson problem~\eqref{eqn:bax-poisson} numerically up to reasonable accuracy. That is, we now consider the space
$$\mathcal{V} = \{\log r\}\oplus\mathcal{H}_{M,N}, \qquad \mathcal{H}_{M,N}=\mathrm{Span}\left\{\{\cos m\psi,\sin m\psi\}_{m=0}^M\otimes\{L_n(br^2)\}_{n=0}^N\right\}.$$
Note that, even if $\mathcal{V}$ contains the singular function $\log$, we have $\mathcal{L}(\mathcal{V}) \subset \mathcal{H}_{M+1,N+1}$. Therefore, the addition of the log allows us to obtain a more accurate approximate solution $\bar{u}$ but does not hamper the a posteriori error estimates. Indeed, once a suitable approximate solution $\bar{u}\in \mathcal{V}$ is found, we only need to control $\mathcal{L}\bar{u}$ when using the adjoint method, which we detail below for this specific example.

\begin{figure}[h]
    \centering
    \includegraphics[width=0.5\textwidth]{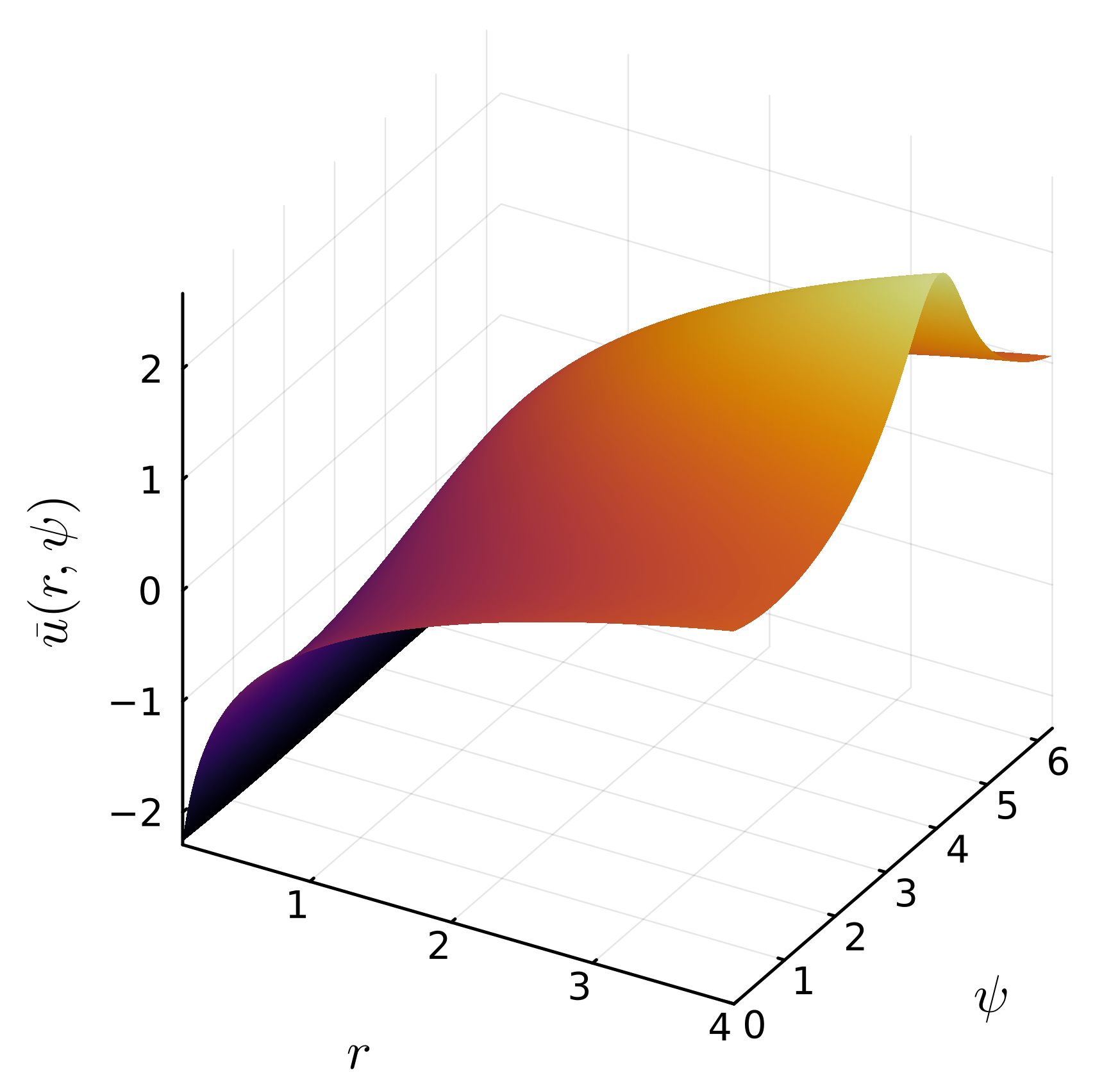}
    \caption{Plot of the numerical solution $\bar{u}$ to Eq.~\eqref{eqn:bax-poisson} with $b = 1.24$ and $\beta=-0.159$. \label{fig:Bax_ubar}}
\end{figure}

Here again $Q_\beta-\bar{Q}_\beta$ will not be bounded in $L^\infty$, so we consider $W = \exp(br^2/2)$, for which we have $\|L_n/W\|_{\infty} = 1$ for all $n\in \N$, along with $\mu(W) = 2^{1+4\beta}$. We then take $\tilde{W} = W\log^2r$, since for all $f
\in \mathcal{V}$, $|f(r,\psi)|/\tilde{W}(r)\rightarrow 0$ as $|f(r,\psi)|\to \infty$ and
$$\mathcal{L}\tilde{W} \leq c\tilde{W}, \qquad c = \sup_{r\in\R^*_+}\left|\frac{\log r \left(b r^2 \log r \left(-b r^2+8 \beta +4\right)+16 \beta +4\right)+2}{16 \log ^2r}\right|<\infty.$$
In particular, this applies if we take $f(r,\psi) = \log r$ such that, by~\cite[Proposition B.1]{Baxendale2024LyapunovNoise}
$$\int_{\R^*_+\times \mathbb{T}}(\mathcal{L}\log(r,\psi))\mu(\d r,\d\psi) = 0.$$
This guarantees that we can still apply the adjoint method rigorously as described in Section~\ref{sec:cap_framework}, i.e.~writing
$$(Q_\beta - \bar{Q}_\beta)(r,\psi) = g_0(br^2) +\sum_{m = 1}^{M+1}(g_m(br^2)\cos{m\psi} +g_{-m}(br^2)\sin{m\psi}),\qquad g_m =\sum_{n=0}^{N+1}\epsilon_{mn}L_n,$$
we have the following bound on the Lyapunov exponent $\lambda_{\beta}$
\begin{equation}\label{eqn:l1-est}
    |\lambda_{\beta}-\bar{\lambda}_{\beta}|\leq \mu_{\beta}(W)\|Q_\beta-\bar{Q}_{\beta}\|_{\infty} \leq 2^{1+4\beta}\sum_{|m|\leq M+1}\sum_{n=0}^{N+1}|\epsilon_{mn}|.
\end{equation}
Note that while the Lyapunov exponent $\lambda_{\beta}$ is not well defined at $\beta = -1/4$, as the system loses ergodicity, the quantity $\|Q_{\beta} - \bar{Q}_{\beta}\|_{\infty}$ is; which allows us to perform the continuation down to $\beta =-1/4$. 

\subsection{Improved error bounds}
\label{sec:ell2bounds}

The error bound~\eqref{eqn:l1-est} was obtained following the general strategy exposed in Section~\ref{sec:cap_framework}, and provides a control on $|\lambda-\bar{\lambda}|$ in terms of the $\ell^1$-norm of the vector $\epsilon=\left(\epsilon_{mn}\right)$ describing $Q_\beta-\bar{Q}_\beta$. However, we show here that better estimates can be obtained, using the same $\epsilon$. This is particularly relevant for this specific example, as $\lambda$ itself is already quite small in magnitude (see Figure~\ref{fig:Duff-a}), therefore rigorously determining its sign requires relatively tight error bounds. 

For $\beta>-1/8$, instead of~\eqref{eqn:l1-est}, we may use the following estimate:
\begin{align*}
    |\lambda_{\beta}-\bar{\lambda}_{\beta}|&=\left|\int_{\R^*_+\times\T}(Q_{\beta}-\bar{Q}_{\beta})(r,\psi)\mu_{\beta}(\d r, \d\psi)\right|,
    \qquad \mbox{from Eq.~\eqref{eqn:lambda-Q},}\\
    &\leq\int_{\R^*_+\times\T}\left|Q_{\beta}-\bar{Q}_{\beta}\right|(r,\psi)\mu_{\beta}(\d r, \d\psi)\\
    &\leq \sum_{|m|\leq M+1}\int_{\R^*_+\times\T}|g_m(br^2)|\mu_{\beta}(\d r,\d\psi)\qquad \mbox{using $|\cos{m\psi}|,|\sin{m\psi}|\leq 1$}\\
    &\leq \frac{1}{Z_{\beta}}\sum_{|m|\leq M+1}\int_{0}^{\infty}|g_m(br^2)|r^{1+8\beta}e^{-br^2}\d r\\
    &\leq \frac{b^{-1-4\beta}}{2Z_{\beta}}\sum_{|m|\leq M+1}\int_{0}^{\infty}|g_m(z)|z^{4\beta}e^{-z}\d z\qquad \mbox{with $z = br^2$}\\
    &\leq \frac{1}{\Gamma(1+4\beta)}\sum_{|m|\leq M+1}\left(\int_{0}^{\infty}|g_m(z)|^2e^{-z}\d z\right)^{1/2}\left(\int_{0}^{\infty}z^{8\beta}e^{-z}\d z\right)^{1/2}\qquad \mbox{by Cauchy--Schwarz}\\
    &\leq \frac{\Gamma(1+8\beta)^{1/2}}{\Gamma(1+4\beta)}\sum_{|m|\leq M+1}\left(\int_{0}^{\infty}|g_m(z)|^2e^{-z}\d z\right)^{1/2}.
\end{align*}
Thus, by orthonormality of the Laguerre polynomials $\{L_n\}_{n\in\N}$ with respect to $e^{-z}\d z$, we obtain
\begin{equation}\label{eqn:l2-est}
     \left|\lambda_{\beta}-\bar{\lambda}_{\beta}\right|\leq\frac{\Gamma(1+8\beta)^{1/2}}{\Gamma(1+4\beta)}\sum_{|m|\leq M+1}\left(\sum_{n=0}^{N+1}\epsilon_{mn}^2\right)^{1/2}.
\end{equation}
Comparing~\eqref{eqn:l1-est} and~\eqref{eqn:l2-est}, we trivially have that
\begin{align*}
    \left(\sum_{n=0}^{N+1}\epsilon_{mn}^2\right)^{1/2} \leq \sum_{n=0}^{N+1} \vert\epsilon_{mn}\vert,
\end{align*}
but the $\ell^2$-norm can actually be much smaller than the $\ell^1$-norm, especially when the coefficients $\epsilon_{mn}$ do not decay rapidly with $n$ (see Figure~\ref{fig:eps}). For instance, in the extreme case where $\epsilon_{mn}=\epsilon_{m}$ does not depend on $n$, one has
\begin{align*}
    \left(\sum_{n=0}^{N+1}\epsilon_{mn}^2\right)^{1/2} \approx \frac{1}{\sqrt{N}} \sum_{n=0}^{N+1} \vert\epsilon_{mn}\vert.
\end{align*}
Therefore, we expect~\eqref{eqn:l2-est} to give a sharper estimate than~\eqref{eqn:l1-est}, at least when the constants $2^{1+4\beta}$ and $\Gamma(1+8\beta)^{1/2}/\Gamma(1+4\beta)$ have the same order of magnitude, which is the case as long as $\beta>-1/8$ is not too close to $-1/8$ (see Figure~\ref{fig:constants}).

\begin{figure}[h]
\captionsetup[subfigure]{justification=centering}
\begin{subfigure}{0.5\textwidth}
\includegraphics[height=0.65\linewidth]{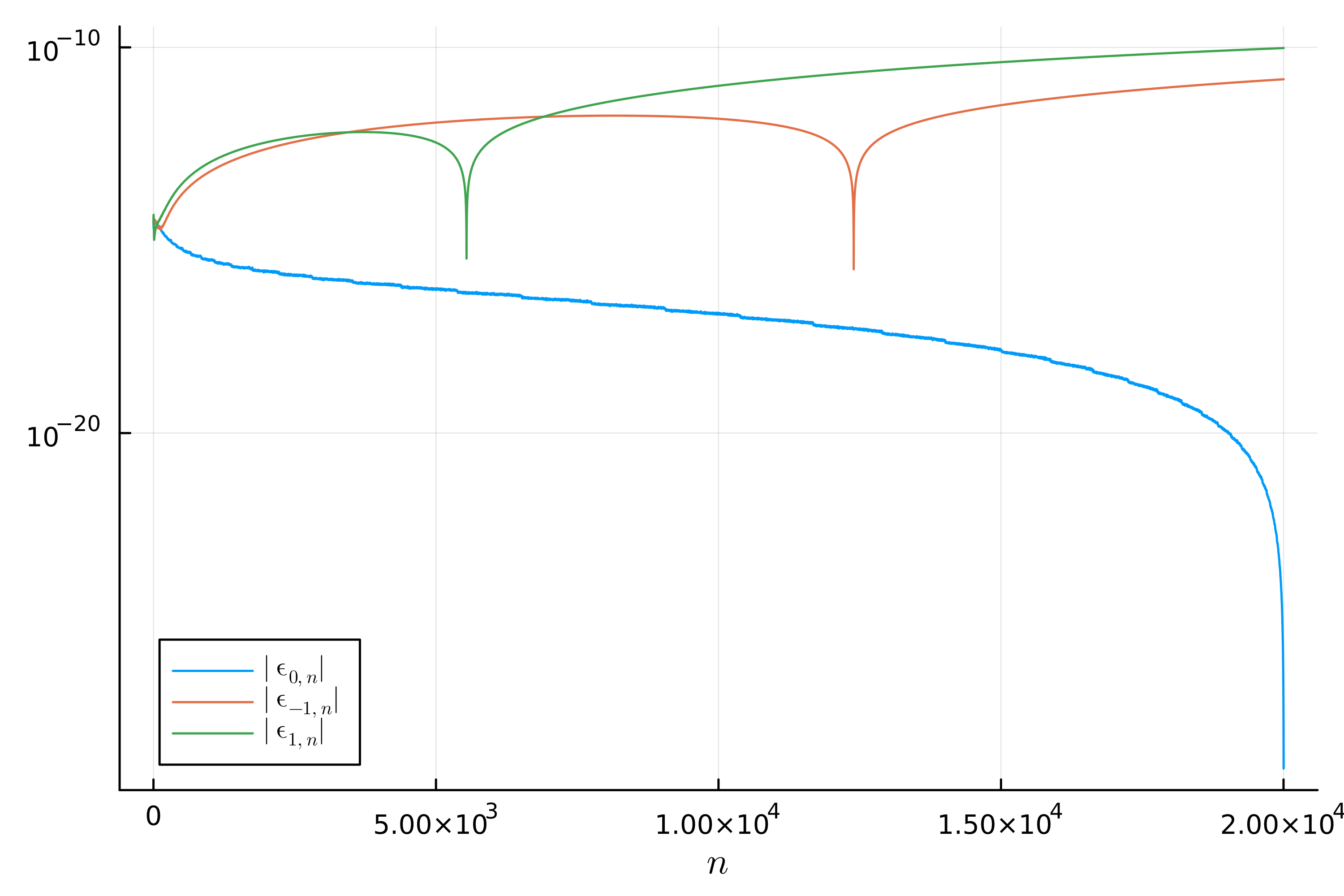} 
\caption{Plot of $\epsilon_{m,n}$ for $m = -1,0,1$ at $\beta = 0$\label{fig:eps}.}
\end{subfigure}
\begin{subfigure}{0.5\textwidth}
\includegraphics[height=0.65\linewidth]{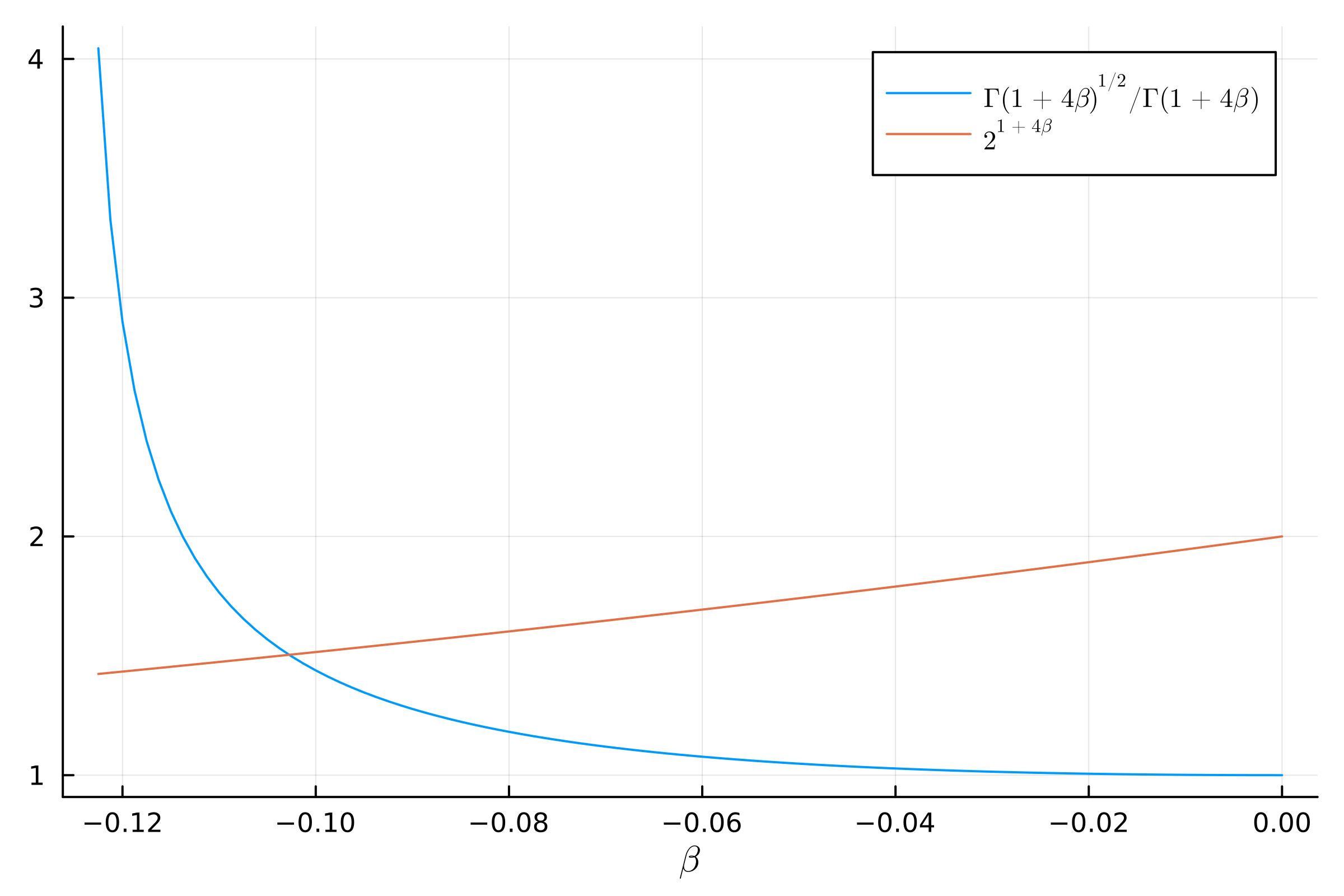} 
\caption{The two different constants\label{fig:constants}.}
\end{subfigure}

\caption{Some of the terms involved in the error bounds~\eqref{eqn:l1-est} and~\eqref{eqn:l2-est}.}
\end{figure}

The above $\ell^2$ bound is limited to $\beta>-1/8$, but a similar idea can in fact be used to go past this barrier, up to using a different basis. For any $\alpha>-1$, we consider $\left\{l^{(\alpha)}_n\right\}_{n\in\N} = \left\{L^{(\alpha)}_n/\sqrt{\Gamma(n+\alpha+1)/n!}\right\}_{n\in\N}$, which forms an orthonormal with respect to $z^{\alpha}e^{-z}\d z$. We can use this basis in place of $\left\{l^{(0)}_n\right\}_{n\in\N} = \left\{L_n\right\}_{n\in\N}$ in $\mathcal{H}_{M,N}$, first in order to numerically find the approximate solution $\bar{u}$, and then to express $Q_\beta-\bar{Q}_\beta$:
$$(Q_\beta - \bar{Q}_\beta)(r,\psi) = g_0(br^2) +\sum_{m = 1}^{M+1}(g_m(br^2)\cos{m\psi} +g_{-m}(br^2)\sin{m\psi}),\qquad g_m =\sum_{n=0}^{N+1}\epsilon_{mn}l^{(\alpha)}_n.$$
For any $\alpha>-1$, the above $\ell^2$ bound now generalises for all $\beta>(\alpha-1)/8$, and yields
\begin{align}
    |\lambda_{\beta}-\bar{\lambda}_{\beta}|&\leq \frac{b^{-1-4\beta}}{2Z_{\beta}}\sum_{|m|\leq M+1}\int_{0}^{\infty}|g_m(z)|z^{4\beta}e^{-z}\d z\qquad \mbox{with $z = br^2$}\nonumber\\
    &\leq \frac{1}{\Gamma(1+4\beta)}\sum_{|m|\leq M+1}\left\{\int_{0}^{\infty}|g_m(z)|^2z^{\alpha}e^{-z}\d z\right\}^{1/2}\left\{\int_{0}^{\infty}z^{8\beta-\alpha}e^{-z}\d z\right\}^{1/2}\qquad \mbox{by Cauchy--Schwarz}\nonumber\\
    &\leq \frac{\Gamma(1+8\beta-\alpha)^{1/2}}{\Gamma(1+4\beta)}\sum_{|m|\leq M+1}\left\{\int_{0}^{\infty}|g_m(z)|^2z^{\alpha}e^{-z}\d z\right\}^{1/2}\nonumber\\
    &\leq \frac{\Gamma(1+8\beta-\alpha)^{1/2}}{\Gamma(1+4\beta)}\sum_{|m|\leq M+1}\left\{\sum_{n=0}^{N+1}\epsilon_{mn}^2\right\}^{1/2}.\label{eqn:l2-est-alpha}
\end{align}
A natural choice would be to take $\alpha = 4\beta$ but this would make the continuation with respect to $\beta$ more intricate, as it would make the basis functions depend on $\beta$. Instead, we split the interval $[-0.25,0]$ into three subintervals $[\underline{\beta},\bar{\beta}]$, and use a fixed value of $\alpha$ on each subinterval.
The output is summarised in the first three rows of Table~\ref{tab:bounds}, and yields Theorem~\ref{thm:intro-Duffing}. In order to obtain Theorem~\ref{thm:Duffing-average-sign}, we refine our results by focusing close to the values of $\beta$ for which $\lambda_\beta$ vanishes, as reported in the two last rows of Table~\ref{tab:bounds}.



\begin{table}[H]
\centering
\begin{tabular}{|c|c|c|c|c|}
    \hline
    $[\underline{\beta},\bar{\beta}]$ & Estimate & $\alpha$ & $\delta> |\lambda_{\beta}-\bar{\lambda}_{\beta}|$ & File at~\cite{Huggzz/Enclosure-of-Lyapunov-exponents}\\
    \hline
    $[-0.25, -0.18]$ & \eqref{eqn:l1-est} & $0$ & $1.3\times10^{-6}$ & \texttt{Duffing/proof\_1}\\
    \hline
    $[-0.18, -0.11]$ & \eqref{eqn:l2-est-alpha} & $-1/2$ & $9.4\times10^{-7}$ &\texttt{Duffing/proof\_2}\\
    \hline
    $[-0.11, 0]$ & \eqref{eqn:l2-est} & $0$ & $2.2\times10^{-6}$&\texttt{Duffing/proof\_3}\\
    \hline
    $[-0.25, -0.249]$ & \eqref{eqn:l1-est} & $0$ & $1.6\times10^{-7}$ & \texttt{Duffing/proof\_1bis}\\
    \hline
    $[-0.16, -0.159]$ & \eqref{eqn:l2-est-alpha} & $-1/2$ & $1.4\times10^{-7}$&\texttt{Duffing/proof\_2bis}\\
    \hline
\end{tabular}
\captionsetup{justification=centering}
\caption{Error bounds resulting from the continuation with respect to the parameter $\beta$\\obtained with $M = 200$ and $N = 20,000$ which Theorems~\ref{thm:intro-Duffing} and~\ref{thm:Duffing-average-sign} rely upon. \label{tab:bounds}}
\end{table}

\section{Outlook}
\label{sec:outlook}



\subsection{Application to more general random dynamical systems}
It turns out that the adjoint method approach can be applied to other Markov processes such as jump diffusions and discrete-time Markov chains~\cite{Glynn2008BoundingProcesses}. For instance, consider the ``double-tent map'' with additive noise on $[0,1)$

$$x_{n+1} = T(x_n) +\xi_n \qquad \mod 1,$$
where the $\xi_n$'s are independent identically distributed uniform random variables on $[-\varepsilon, \varepsilon]$ and

\begin{equation}\label{eqn:Giu_T-def}
T(x) = \begin{cases}
    x/3 \qquad &\mbox{for $0\leq x<1/6$,}\\
    1/9 - x/3 \qquad &\mbox{for $1/6\leq x<1/3$,}\\
    3x-1 \qquad &\mbox{for $1/3\leq x<2/3$,}\\
    3 - 3x\qquad &\mbox{for $2/3\leq x<1$.}\\
\end{cases}
\end{equation}

We chose this example as it fulfils the assumptions of~\cite{Lamb2025NonHorseshoe}. This system clearly displays a negative Lyapunov exponent $\lambda_{\varepsilon}$ for a small noise $\varepsilon$ but one can show~\cite{Lamb2025HorseshoesMaps} that there exists ${\varepsilon_0}<1/2$ such that for all $\varepsilon > \varepsilon_0$, $\lambda_{\varepsilon}>0$. The techniques of~\cite{Lamb2025HorseshoesMaps} actually provide an upper bound for $\varepsilon_0$. Our technique should however give a sharper estimate for $\varepsilon_0$. In this case, the Lyapunov exponent can be written as 

$$\lambda_{\varepsilon} = \int_0^1 \log{|T'(x)|}\mu(\d x),$$
where $\mu$ is the unique stationary distribution of the Markov process $(x_n)_{n\in \mathbb{N}}$. Now consider the Markov semi-group $(\mathcal{P}^n)_{n\in \mathbb{N}}$ associated to $(x_n)_{n\in \mathbb{N}}$

$$\mathcal{P}u(x) = \mathbb{E}\left[u(x_1)\mid x_0 = x\right],$$
for all $u\in [0,1) \to \mathbb{R}$ bounded-measurable. We have that for all such $u$

$$\int_0^1 (\mathcal{P}u)\d \mu = \int_0^1 u \d (\mathcal{P}^*\mu) = \int_0^1 u \d \mu,$$
and thus,

$$\int_0^1 (\mathcal{P}u- u)\d \mu = 0.$$
We therefore seek to solve the Poisson problem

\begin{equation}\label{eqn:Giu_Poisson}
(\mathcal{P}- \mathrm{id})u = \log|T'| - \lambda_{\varepsilon}.
\end{equation}
We thus consider for some large $N\in \N$, a piecewise constant numerical guess $\bar{u}$ for the above Poisson problem

$$\bar{u}(x) = \sum_{n = 1 }^N \bar{u}_i \,\Ind{[(i-1)/N, i/N)}(x).$$

It turns out that directly solving this system led to numerical instabilities, as $(\mathcal{P}-\mathrm{id})$ is non-invertible and dense, and its kernel is not immediately straightforward to mod out. We thus instead used the formula~\cite{Glynn1996AEquation}

$$u(x) = -\sum_{n = 0}^{\infty}\mathbb{E}\left[\log|T'(x_n)|-\lambda_{\varepsilon}\mid x_0 = x\right] = -\sum_{n=0}^{\infty}(\mathcal{P}^n \log|T'|-\lambda_{\varepsilon})(x).$$
By iterating $\mathcal{P}$ enough times, we find $\bar{u}$ represented in Figure~\ref{fig:Giu_ubar} for $\varepsilon = 1/4$.
\begin{figure}[h]
\begin{subfigure}{0.5\textwidth}
\includegraphics[width=0.9\linewidth]{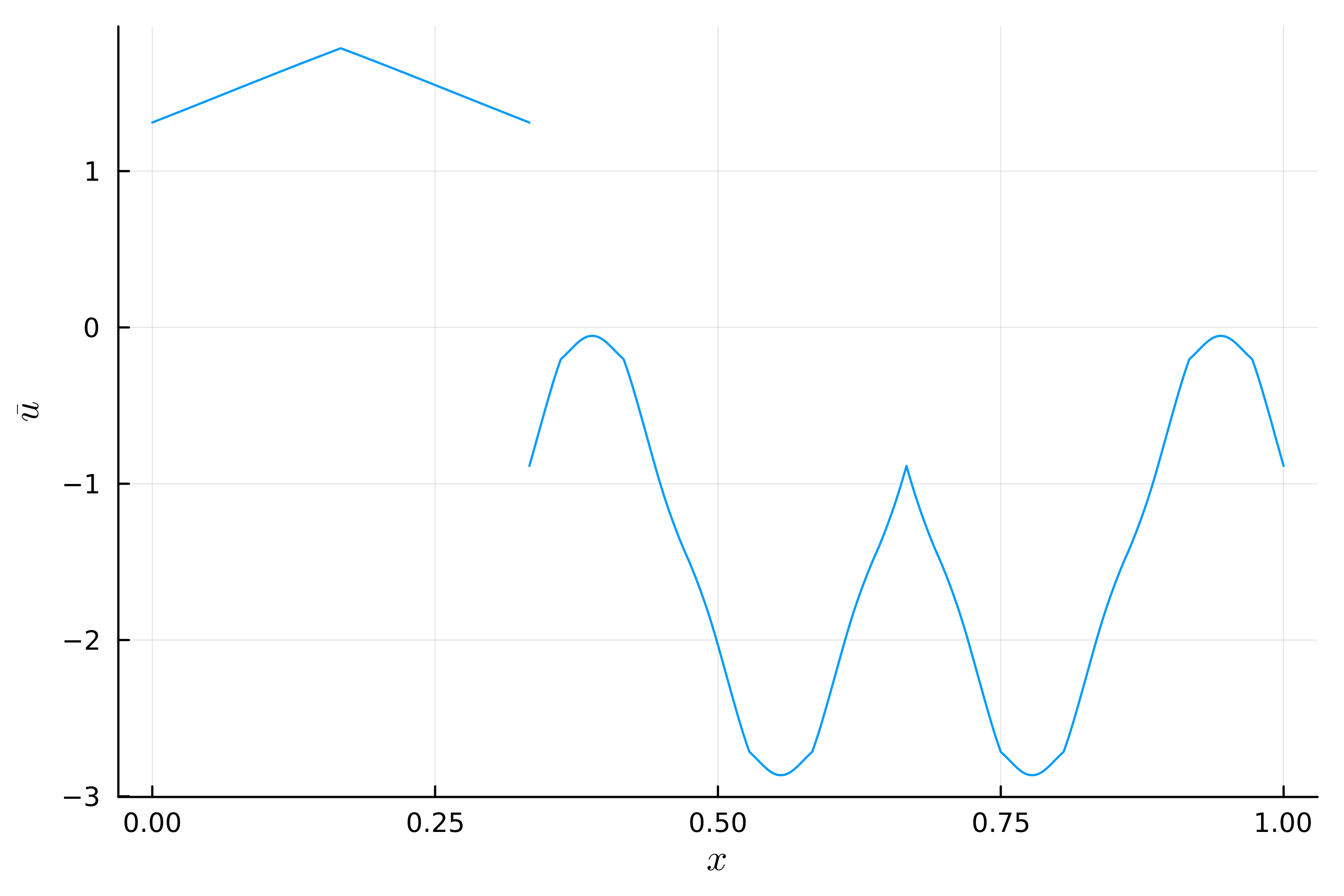}
\captionsetup{justification=centering}
\caption{Plot of a numerical solution $\bar{u}$ to the\\Poisson problem~\eqref{eqn:Giu_Poisson}.}
\label{fig:Giu_ubar}
\end{subfigure}
\begin{subfigure}{0.5\textwidth}
\includegraphics[width=0.9\linewidth]{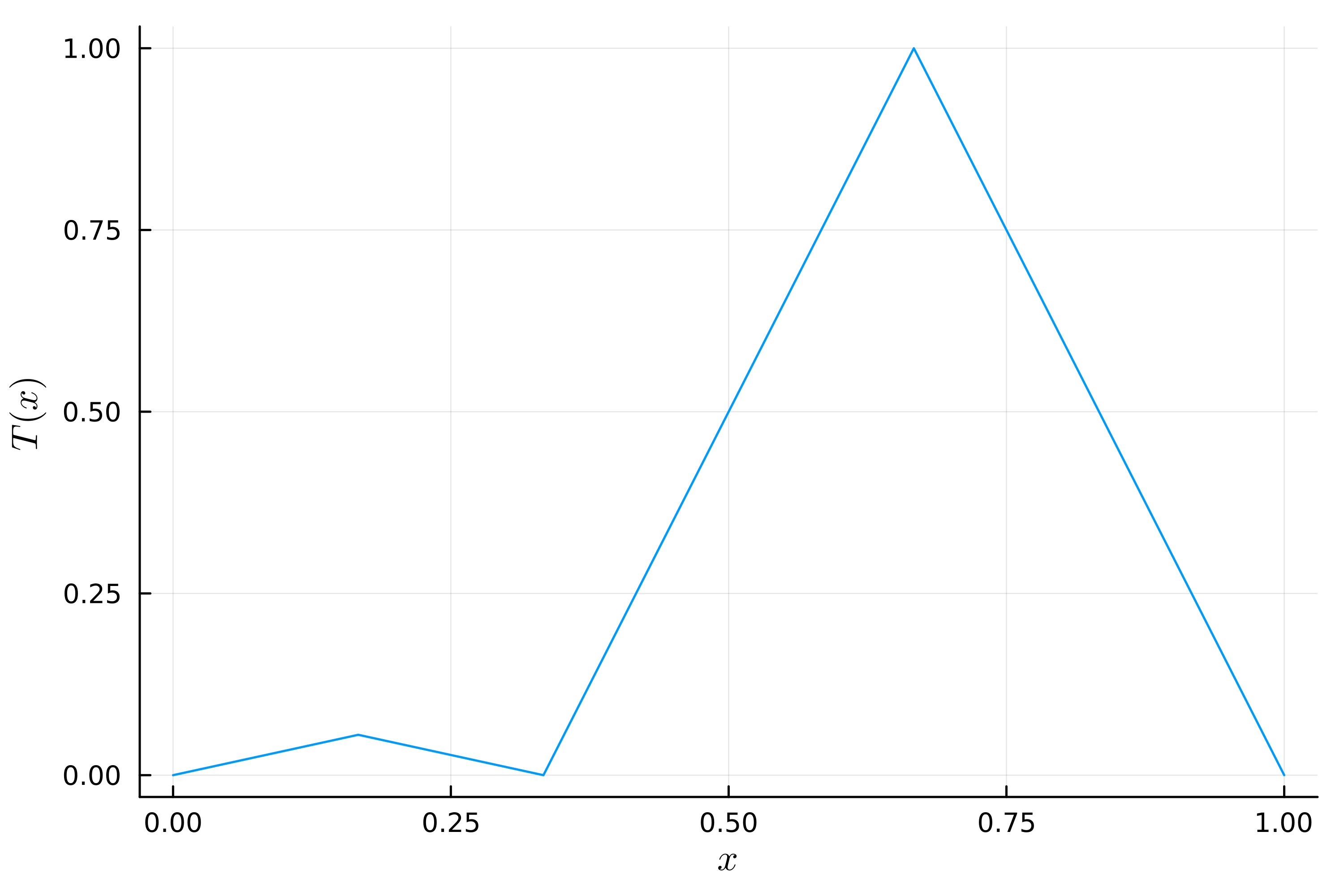} 
\caption{Plot of $T:[0,1)\rightarrow [0,1)$ as\\defined in~\eqref{eqn:Giu_T-def}.}
\label{fig:Giui_T}
\end{subfigure}
\caption{Plots of the functions $\bar{u}$ and $T$.}
\end{figure}

Thus, for the parameter $\varepsilon = 1/4$, we find that

$$\lambda_{\varepsilon} = 0.095547\pm 1.4\times 10^{-4}>0.$$

Furthermore, the results of~\cite{Lamb2025NonHorseshoe} imply that for this parameter, the random dynamical system displays a random Young tower and a random horseshoe.

Note that this approach is quite simplistic and one would expect much better results from a spectral method~\cite{Wormell2019SpectralDynamics}, by splitting the domain into several pieces and using Chebyshev interpolation on each subinterval for instance.

While this system can also be treated with existing methods (such as the Ulam method~\cite{Chihara2022ExistenceMaps, Galatolo2020ExistenceProof})
and this naive approach simply in this form cannot at this stage handle critical points of $T$, we believe that it provides a more realistic framework to treat higher-dimensional systems such as the Hénon map with bounded noise (see Remark~\ref{rmk:mat-vec}). 

\subsection{Towards the central limit theorem}  
If one is interested in the study of the fluctuations of the finite-time Lyapunov exponents
$$\lambda_t(\omega, x, v) = \frac{1}{t}\log\|D\varphi_t(\omega, x)v\|,$$
then one can for instance use the central limit theorem~\cite[Section 7]{Arnold1986LyapunovSystems}
$$\sqrt{t}(\lambda - \lambda_t)\xrightarrow[t\to\infty]{\mathrm{law}}\mathcal{N}(0,\sigma^2_{\lambda}),$$
where
$$\sigma^2_{\lambda} = -2\int(Q-\lambda)u\d\tilde{\mu},$$
and $\mathcal{L}u = Q - \lambda$ (see also~\cite{ Baxendale1988LargeDiffeomorphisms, Baxendale2025InPreparation, Bhattacharya1982OnProcesses}). Note that we then have
$$\left|\sigma^2_{\lambda} - 2\int(\lambda - Q)\bar{u}\d\tilde{\mu}\right|\leq 2\left|\int(Q - \lambda)(u-\bar{u})\d\tilde{\mu}\right|,$$
where the integral of the left-hand side can in principle be enclosed with the techniques of the present paper (provided we first have obtained bounds on $\lambda$). Now, for the right-hand side (assuming without loss of generality that $\tilde{\mu}(u) = \tilde{\mu}(\bar{u}) = 0$), for a suitable Lyapunov function $W$, one can bound the operator $\mathcal{L}^{-1}$~\cite[Theorem 2.3]{Glynn1996AEquation} (see also~\cite{Pardoux2005On3}): there exists a constant $C>0$ (depending only on $\mathcal{L}$ and $W$) such that

$$\left\|\frac{(u-\bar{u})}{W}\right\|_{\infty}\leq C\left\|\frac{(Q-\bar{Q})}{W}\right\|_{\infty},$$
where $\|(Q-\bar{Q})/W\|_{\infty}$ is already known. In the case of diffusions, an estimate for $C$ seems difficult to obtain (even with computer-assisted techniques) as its existence usually relies on a non-completely constructive argument (typically in the proof of the so-called coupling property). 
An exception is in the case of uniformly elliptic diffusions (such as in Section~\ref{sec:hopf}): one can in principle use~\cite{Bogachev2018TheDiffusions} for the construction of such a bound which is however intricate and beyond the scope of this work. In the case of a uniformly elliptic diffusion on a bounded domain and in particular on the torus $\mathbb{T}^{2d-1}$, one can in principle enclose $\bar{u}$ with respect to the $L^{\infty}$-norm~\cite{Nakao2019NumericalEquations}. We additionally mention~\cite{Glynn2025ComputableChains,Herve2025ComputableKernels} which gives computable bounds in the case of discrete-time Markov chains, and~\cite{BlessingNeamtu2025DetectingFunctions} where a computer-assisted enclosure for $\sigma_\lambda^2$ is obtained for a very specific SDE example.


\section*{Acknowledgments}
HC thanks Dennis Chemnitz, Luiz San Martin, Giuseppe Tenaglia and Luca Ziviani for valuable discussions. HC is particularly grateful to Michele Coti Zelati for providing some feedback on an initial version of this work. MB and HC thank Xue-Mei Li for her hospitality and helpful discussions.

MB and HC thank the \textit{Centre de recherches mathématiques}, Montréal for its hospitality during the thematic program \textit{Computational Dynamics: Analysis, Topology and Data} where this work was completed. HC is grateful to G-Research and the Doris Chen mobility fund for making this visit possible. HC was supported by a scholarship from the Imperial College London EPSRC DTP in Mathematical Sciences (EP/W523872/1). HC and JL were additionally supported by the EPSRC Centre for Doctoral Training in Mathematics of Random Systems: Analysis, Modelling and Simulation (EP/S023925/1).
MB, HC and MR thank the CNRS--Imperial \textit{Abraham de Moivre} IRL for supporting this research.
MB was supported by the ANR project CAPPS: ANR-23-CE40-0004-01.

\printbibliography[heading=bibintoc]

\end{document}